\hyphenation{diffe-ren-tial}
\hyphenation{mani-fold}\hyphenation{orbi-fold}
\hyphenation{mani-folds}\hyphenation{orbi-folds}
\hyphenation{si-mi-lar}
\hyphenation{pro-ba-bi-lis-tic}
\hyphenation{suc-ces-sive}
\hyphenation{nor-mal-iza-tion}
\hyphenation{in-cor-po-rat-ing}
\hyphenation{ge-om-e-try}

\documentclass[a4paper,centertags,oneside,12pt]{amsart}
\usepackage{mathrsfs}
\usepackage{appendix}
\usepackage{amssymb}
\usepackage{fancyhdr}
\usepackage{charter}
\usepackage{typearea}
\usepackage{pdfsync}
\usepackage{mathrsfs}
\usepackage[a4paper,top=3cm,bottom=3cm,left=2cm,right=2cm]{geometry}
\usepackage{enumitem}
\usepackage{color}

\usepackage[pdftex, colorlinks=true, urlcolor=black]{hyperref}
\hypersetup{  citecolor=black}

\setcounter{MaxMatrixCols}{10}

\setlist[1]{itemsep=5pt}
\newcommand{\comment}[1]{}
\makeatletter

\def\@setcopyright{}

\def\serieslogo@{}
      \makeatother

\newcommand*\Laplace{\mathop{}\!\mathbin\bigtriangleup}
\newcommand*\DLaplace{\mathop{}\!\mathbin\Box}
\newcommand*\str{{\mathrm{STr}\,}}
\newcommand*\trace{{\mathrm{Tr\,}}}
\newcommand*\ind{{\mathrm{ind\,}}}
\newcommand*\ch{{\mathrm{ch\,}}}
\newcommand*\dimension{{\mathrm{dim}}}

\newcommand*\expo{{\mathrm{exp\,}}}
\newcommand*\sone{{\mathbb{S}^1}}
\newcommand{\ortho}[1]{{\mathscr{SO}(#1)}}

\newcommand{\coloneqq}{:=}

\theoremstyle{plain}
\newtheorem{thm}{Theorem}[subsection]

\newtheorem{lemma}[thm]{Lemma}

\newtheorem{prop}[thm]{Proposition}

\theoremstyle{definition}
\newtheorem{defn}[thm]{Definition}
\newtheorem{nota}[thm]{Notation}
\theoremstyle{remark}
\newtheorem{rmk}[thm]{Remark}

\numberwithin{equation}{subsection}

\renewcommand{\epsilon}{\varepsilon}

\begin{document}
\title[A Local Index Theorem of Transversal Type on Manifolds with Locally free $\sone$-action]{A Local Index Theorem of Transversal Type on Manifolds with Locally free $\sone$-action}

\author{Dung-Cheng Lin}
\address{Department of Mathematics, National Taiwan University, Taipei 10617,
Taiwan}
\email{dungchenglin@ntu.edu.tw}

\author{I-Hsun Tsai}
\address{Department of Mathematics, National Taiwan University, Taipei 10617,
Taiwan}
\email{ihtsai@math.ntu.edu.tw}

\maketitle

\begin{abstract}
We study an index of a transversal Dirac operator on an odd-dimensional manifold $X$ with locally free $\sone$-action. One difficulty of using heat kernel method lies in the understanding of the asymptotic expansion as $t\to 0^+$. By a probabilistic approach via the Feynman-Kac formula, the transversal heat kernel on $X$ can be linked to the ordinary heat kernel for functions on the orbifold $M=X/\sone$ which is more tractable. After some technical results for a uniform bound estimate as $t\to 0^+$, we are reduced from the transversal, orbifold situation to the classical situation particularly at points of the principal stratum. One application asserts that for a certain class of spin orbifolds $M$, to the classical index problem of Kawasaki in the Riemannian setting the net contributions arising from the lower-dimensional strata beyond the principal one vanish identically.

\end{abstract}

{}

\section{Introduction} \label{sec:intro}
In \cite{cheng2015heat} a local index theorem is proved for a compact, oriented Cauchy-Riemann manifold $X$ with a transversal, locally free CR $\sone$-action.  If $X$ is the circle bundle of a holomorphic line bundle $L$ on a compact complex manifold $M$, this index theorem gives rise to the Hirzebruch-Riemann-Roch theorem for the dual bundle $L^\ast$ (and $(L^\ast)^m$) on $M$. A similar result also holds true in the more general case where the pair $(M,L)$ above is understood in the context of orbifold geometry. It improves a classical index formula of Kawasaki \cite{kawasaki1978the,kawasaki1979the} on certain complex orbifolds in the way as similarly indicated in the {Abstract}. Let us refer to \cite[p. 6 and p. 23]{cheng2015heat} for explanation within the CR context. Recent works in the related area under different formalisms and approaches include \cite{bruning2010equivariant}, \cite{bruning2010eta}, \cite{paradan2009index} and \cite{richardson1998asymptotics}.

In the present paper we study, in a nutshell, a transversal index theorem in the Riemannian setting with probabilistic heat kernel method. Here and henceforth $X$ denotes an odd-dimensional manifold with locally free $\sone$-action satisfying appropriate topological conditions (which are not specified here) such that a transversal Dirac operator can be defined. We would like to compute the associated local index density in this context. The main result is:\\

\noindent\textbf{Main Theorem.} \textit{($=$Theorem \ref{thm:main_result}) Given our transversal Dirac operator $D_{X,0}$ on $X$ with locally free $\,\sone$-action, the equivariant index $\ind_\sone(D_{X,0})\coloneqq \dimension\ker(D_{X,0}^+)-\dimension\ker(D_{X,0}^-)$ is given by
\[
\ind_{\sone}(D_{X,0})=\frac{p}{2\pi}\int_X\widehat A(\mathcal H)\wedge\ch\xi\wedge\omega_0,\]
where $p=\min_{u\in X}|H_u|$ denotes the least order of the isotropy subgroups $H_u$ of $\,
\sone$, $\mathcal H$ the horizontal subbundle of $\,TX$ and $\omega_0$ the (normalized) global real $1$-form along $\,\sone$-orbits. Here $\xi$ is any $\sone$-equivariant complex vector bundle (endowed with an $\sone$-invariant connection).}\\

At the end of this {Introduction}, we will indicate applications of the above result.

Our approach consists in a combination of the probabilistic method and the heat kernel one. On the heat kernel side, the authors in \cite{cheng2015heat} have treated this in depth, partly within the realm of CR and complex geometry (such as BRT coordinates) and partly within that of metric geometry (such as off-diagonal estimates from Getzler's rescaling technique as well as an insertion of ``distance'' function discovered by them). Eventually they are able to obtain an asymptotic expansion for the relevant transversal heat kernel, and then compute the local, transversal index density out of the ordinary (non-transversal) one.

Here we do not, however, aim at an extension of \cite{cheng2015heat} to the Riemannian setting, although this could be of interest in its own right. Our approach lies in a minimal use of information from the asymptotic expansion. To explain this we turn now to the probabilistic side.

The probabilistic approach to index theorems has been pioneered by J.-M. Bismut in seminal works \cite{bismut1984atiyah,bismut1984atiyah2}.   See also \cite{watanabe1990short}, \cite{watanabe1987analysis}.  
One of the key ideas is the Feynman-Kac representation of heat kernels. As far as our transversal index theorem is concerned, the formulation and method of E. P. Hsu as explained in the monograph \cite{hsu2002stochastic} are especially suitable for our purpose. But since $M=X/\sone$ may be singular and the usual extrinsic method via the Whitney embedding theorem is thus not adequate to our needs, we resort to intrinsic methods (cf. \cite{stroock2000introduction}) and adapt them to the smooth $X$. We found it possible to have a conversion of the transversal heat kernel $p_X^D(t,u,v)$ on $X$ (cf. \eqref{eq:heat_eq_for_p_X^D}) to the ordinary heat kernel $p_M(t,x,y)$ for functions on the orbifold $M=X/\sone$. It turns out that the minimal amount of information from the asymptotic expansion to be used for us is those from $p_M$ only, provided that certain technical results on orbifold geometry can be verified.

To reduce ourselves from the transversal situation to the ordinary one is almost immediate given that the $\sone$-action is globally free (so $M$ is an ordinary manifold). However, for the $\sone$-action which is only locally free as assumed here, the preceding reduction procedure cannot be warranted without a careful justification. We observe, based upon Hsu's method, that this justification can be made provided that a certain uniform bound estimate is satisfied (see \eqref{eq:boundedness}). This leads us to the ``interchangeability'' as coined in Section \ref{sec: est_HBM}. We manage to prove such analytic requirement as said, and this essentially enables us to complete the needed reduction step.

We remark that the strict heat kernel approach in \cite{cheng2015heat} for CR manifolds, yields for the asymptotic expansion of the desired heat kernel an extra ``correction term'' (not existent if the $\sone$-action is globally free), which is schematically of the form $t^{-1/2\,(\text{dim}X-1)}$ $e^{-\epsilon \widehat d(x,X_{\mathrm{sing}})^2/t}$ with $\widehat d(x,X_{\mathrm{sing}})$ a kind of distance between $x$ and higher strata $X_{\mathrm{sing}}$, cf. \cite[(1.19)]{cheng2015heat}. Since this correction (in $x$) goes unbounded as $t\to 0$ (for those $x$ as near $X_{\mathrm{sing}}$ as possible), the naive implementation cannot lead to the local index density $I(x)$. This is where the so-called off-diagonal estimate comes in at the time when taking the  supertrace of the heat kernel \cite[(5.27)]{cheng2015heat}, so that (after the 
supertrace) certain cancellation occurs among those unbounded corrections and the existence of $I(x)$ is restored. Our approach in the present paper is thus significantly distinct, although the uniform bound estimate as mentioned above should amount to effectuating the preceding cancellation issue in a different way.

For further questions such as the locally free action by other compact Lie groups $H$ or a study into asymptotic expansions and trace integrals in the Riemannian setting similar to \cite{cheng2015heat} in the CR setting, some more work needs to be done and generalized; see also \cite{park1996basic}, \cite{richardson2010transversal}. 
However the $H$-invariant part correspoinding to $m=0$ (see below for more) can be done 
in a way parallel to the present treatment without essential difficulties.    We omit the precise formulation here.  
It is known that all orbifolds can arise in this way with $H=O(n)$ (cf. \cite[p. 76]{kawasaki1978the} 
or \cite[p. 174]{duistermaat2013heat}).  
As a consequence, in some of Kawasaki's index problems on orbifolds (not necessarily  $\sone$-quotients) 
\cite{kawasaki1978the,kawasaki1979the} the net contributions 
from the lower-dimensional strata beyond the principal stratum necessarily vanish
(cf. \cite{paradan2009index} for related results).    Note that the contribution from the individual lower-dimensional
stratum may be nonvanishing (cf.   \cite[p. 185]{duistermaat2013heat}).  

In the formulation of the local index theorem given in the Main Theorem above (and Theorem \ref{thm:main_result}), we focus on the $\sone$-invariant part $\Omega_0(G)$ which belongs to the full set of Fourier components $\{\Omega_m(G)\}_{m\in\mathbb{Z}}$ (see Definition \ref{def:Fourier_component_G}). Nevertheless it is possible to extend the index theorem to all $m\in\mathbb{Z}$ in a completely analogous way. See the comments in the last paragraph of this paper, and \cite{cheng2015heat} for a similar treatment in the context of CR manifolds.

Finally let us remark that besides the application to index problems of Kawasaki as aforementioned, another advantage of the present index theorem is that it suggests an extension of the index theorem in the CR case \cite{cheng2015heat} to the ``almost CR'' case. One may think of this extension as a transversal counterpart of the classical extension of the index theorem from the case of a holomorphic vector bundle on a complex manifold to the case of a complex vector bundle on an almost complex manifold (see e.g. \cite[Theorem 3.42, p. 86]{salamon1999spin}). Such an extension in the transversal sense does not appear obvious from the methodology of \cite{cheng2015heat} due to the heavy use of the CR and complex structures involved there. Since the needed techniques for the almost complex setting provided the Riemannian setting
are fairly standard (\cite[p. 86]{salamon1999spin}), which are basically applicable to the situation here (via the Main Theorem above), we leave the details to the interested reader.

\bigskip

\textbf{Acknowledgments.} This work owes a great deal to the monograph \cite{hsu2002stochastic}; to the author Professor Elton P. Hsu we are very much grateful. Part of this joint work was done while the first named author was a research assistant in the National Center of Theoretical Sciences (NCTS) in Taiwan, which he sincerely thanks.  The second named author 
would like to thank Jih-Hsin Cheng and Chin-Yu Hsiao for related collaborations.  He was partially supported by National
Taiwan University grant no. 106-2821-C-002-001-ES.

\section{Preliminaries}

Of particular interest to us are certain index theorems for manifolds with \textit{locally free} $\sone$-action. We briefly review some basics, yet leave some of them concerning orbifolds and probability (for geometrically minded reader) 
in appendices.  

\subsection{Notations and Set-up}
\label{subsection:Notations and Set Up}
We first consider $X$ to be a principal $\sone$-bundle, so that the $\sone$-action is \textit{globally} free. The corresponding index theorem on $X$ will be identified as the Atiyah-Singer index theorem on the manifold $M=X/\sone$. This is not new, but we aim to set up the notation.

Our notation mainly follows Hsu's work \cite{hsu2002stochastic}. Let $X=(X,G,\pi)$ be a principal $\sone$-bundle of dimension $n+1$ over the base manifold $M$ with the projection $\pi:X\to M=X/\sone$. We assume that $M$ is a compact and oriented Riemannian manifold of even dimension $n=2\ell$. Denote by $\mathscr{SO}(M)$ the principal ${\rm{SO}}(n)$-bundle of oriented orthonormal frames on $M$ with the projection $\widehat\pi:\mathscr{SO}(M)\to M$. There is an $\sone$-invariant metric on $X$. We assume that this $\sone$-invariant metric induces the Riemannian metric on $M$.

The Laplace-Beltrami operator $\Laplace_M$ on functions $f\in C^\infty(M)$ is defined as usual, and the Bochner's horizontal Laplacian on $\ortho{M}$ is defined by $\Laplace^H_{\ortho{M}}=\sum_{i=1}^n H_i^2$ where $\{ H_i\}$ are the fundamental horizontal vector fields on $\ortho{M}$, cf. \cite[Section 3.1]{hsu2002stochastic}.

Let $f\in C^\infty(M)$ and $\widehat f=f\circ \pi$ its lift to $\ortho{M}$. We have the identity that for any $u\in\mathscr O(M)$, $\Laplace_Mf(\pi u)=\Laplace_{\ortho{M}}^H\widehat f(u)$ (\cite[Proposition 3.1.2]{hsu2002stochastic}). The operators $\Laplace_M$ and $\Laplace^H_{\ortho{M}}$ can be extended to act on tensor fields $\theta$ (as the covariant Laplacian) with similar relation (cf. \cite[Section 7.1]{hsu2002stochastic}).

\begin{rmk}
\label{rmk:Hodge-de Rham_Laplace}
The Hodge-de Rham Laplacian is defined by $\DLaplace_M=-(d\delta +\delta d)$, where $d$ is the exterior differentiation with its formal adjoint $\delta$.
The lifted operator $\DLaplace^H_{\mathscr{O}(M)}$ is likewise related to $\DLaplace_M$ by $\DLaplace^H_{\mathscr{O}(M)}\widehat{\theta}(\widehat x)=\widehat x^{-1}\DLaplace_M\theta (x),\; \widehat \pi \widehat x=x$. Here $\widehat x:\mathbb{R}^n\to T_xM$ is canonically extended to an isometry $\widehat x:T^{r,s}\mathbb{R}^n\to T^{r,s}_x M$ for $(r,s)$-tensors, in particular, $p$-forms. See \eqref{eq:Lplace^H_X} of Remark \ref{rmk:Laplace_M_OM} in the context of more pertinence to our interest.
\end{rmk}

Assume that $M$ is spin so we fix a spin structure on $M$ and write $\mathscr{SP}(M)$ for the associated principal bundle. Due to the spin and half-spin representations $\Laplace=\Laplace^+\oplus \Laplace^-$, one has the spin bundles $\mathscr{S}(M)=\mathscr{SP}(M)\times_{\mathrm{spin}(n)}\Laplace$ (resp. $\mathscr S(M)^\pm =\mathscr{SP}(M)\times_{{\rm{Spin}}(n)}\Laplace^\pm$)$=\mathscr S(M)^+\oplus \mathscr S(M)^-$.

Let $\xi$ be a complex vector bundle on $M$ equipped with a connection $\nabla^\xi$. Denote by $\nabla^M$ the Riemannian connection on $M$. The twisted bundle $G=\mathscr S(M)\otimes \xi$ is endowed with the product connection, simply denoted as $\nabla$. We have $G=G^+\oplus G^-,\quad G^\pm=\mathscr S(M)^\pm\otimes \xi$. The Dirac operator $D$ on $G$ is defined as a series of compositions:
\begin{equation}
\label{eq:Dirac_composition}
D:\Omega(G)\overset{\nabla}{\longrightarrow}\Omega(T^\ast M\otimes G)\overset{\text{dual}}{\longrightarrow}\Omega(TM\otimes G)\overset{c}{\longrightarrow}\Omega(G),
\end{equation}
where $\Omega(G)$ denotes the space of $G$-valued differential forms on $M$ and $c$ the Clifford multiplication.

\subsection{An Index Theorem on a Principal $\mathbf{\sone}$-bundle ${X}$}
\label{subsection:Atiyah-Singer Index Theorem}
We shall now describe a relation between our index theorem on $X$ and the classical Atiyah-Singer index theorem on $M=X/\sone$. See \cite{cheng2015heat} for a similar result in the context of CR manifolds.

Let $D$ on $M$ be as above and $D_X$ be the first order, $\sone$-equivariant differential operator or the so-called transversal Dirac operator such that $-D_X^2$ is transversally elliptic; see \cite{prokhorenkov2011natural} and Subsection \ref{subsection:transversal_D_X}. Let $p_X^D(t,u,v)$ be the transversal heat kernel for $-D_X^2$ on $X$ (see Proposition \ref{prop:heat_eq_D_X}).

Our idea is firstly to reformulate the McKean-Singer formula for an $\sone$-equivariant index of $D_X$. That is, we will be engaged in the following (cf. \eqref{eq:McKean-Singer formula}):
\begin{equation}
\label{eq:global_index_X}
\ind_\sone (D_X)=\int_X\str p_X^D(t,u,u)\mathrm du,\quad \forall t>0,
\end{equation}
and would like to write the above integral as one down on $M$.
For the principal $\sone$-bundle $X$ it is true that, after a suitable normalization on the metric of $X$ (see Section \ref{sec:const}),
\begin{equation}
\label{eq: def of p_X_global}
p_X^D(t,u,u)=\frac{1}{2\pi} p_M^D(t, \pi u,\pi u).
\end{equation}
Therefore, by fiber integration with $\int_\sone\mathrm d\theta=2\pi$, \eqref{eq:global_index_X} becomes
\begin{equation}
\label{eq:index_s_one}
\ind_\sone (D_X)=\int_M \str p_M^D(t,x,x)\mathrm dx,\quad x=\pi u,\quad \forall t>0,
\end{equation}
which is of the form of the classical McKean-Singer formula on $M$.

In short, the $\sone$-equivariant index $\ind_\sone (D_X)$ above is connected to the ordinary index $\ind(D)$ on $M$. A similar statement is discussed in e.g. \cite[Theorem 4.7]{cheng2015heat}.

Denote by $I(x)=\lim_{t\downarrow 0^+}\str p_M^D(t,x,x)$ in \eqref{eq:index_s_one}. It is well known that $I(x)=\widehat A(TM)\wedge\ch\xi$, where $\widehat A(TM)$ and $\ch\xi$ are respectively the $\widehat A$-genus of the tangent bundle $TM$ and the Chern character of the complex vector bundle $\xi$ (e.g. \cite[Theorem 4.8]{berline2003heat}). This formula can be converted ``transversally'' into the desired index theorem on $X$ (with notations following those in Theorem \ref{thm:main_result}):
\begin{equation}
\ind_\sone (D_X)=\frac{p}{2\pi}\int_X \widehat A(\mathcal H)\wedge\ch\xi\wedge\omega_0\quad (\text{here }p=1).
\end{equation}

In this paper, we put emphasis on the manifold $X$ with $\sone$-action being locally free rather than globally free.

\section{Transversal Heat Kernels on $X$} 
\label{sec:const}

Henceforth $X$ is a compact, oriented manifold of dimension $n+1=2\ell +1$ with locally free $\sone$-action and we let $\pi:X\to M=X/\sone$. Here ``locally free'' means that the isotropy subgroup $H_u\subset\sone$ of $u$ is finite for every $u\in X$. If $H_u=\{\rm{id}\}$ for every $u\in X$, we say that the action is \textit{globally free} or free for short. For globally free $\sone$-action, the base $M$ is a smooth manifold; for locally free $\sone$-action, $M$ has a natural orbifold structure (cf. \cite[p. 173]{duistermaat2013heat}).

Remark that we mainly focus on the case where $\left\{ u\in X: \{\mathrm{id}\}=H_u\subset\sone\right\}\neq\emptyset$, as the general case is easily reduced to it (see Theorem \ref{thm:main_result}).

There exists an $\sone$-invariant metric on $X$. We normalize this metric so that the real vector field $T$ (cf. Remark \ref{rmk:smoothness}) induced by the $\sone$-action is of unit norm. Although $M$ now is an orbifold, the $\sone$-invariant metric also induces a Riemannian metric on $M$; see Subsection \ref{subsection: orbibundle} for more.

Denote by $\mathcal H\subset TX$ the (horizontal) subbundle of rank $n$ consisting of those tangent vectors orthogonal to $\sone$-orbits, and by $\mathcal H^\ast\subset T^\ast X$ its dual (via the $\sone$-invariant metric).

We assume that our manifold $X$ is \textit{transversally} spin and that the $\sone$-action on $X$ is of \textit{transversally even type} (cf. \cite[p. 295]{lawson2016spin}). Simply put, these assumptions guarantee that on $\mathcal H$ there exists a spin structure as a principal $\mathrm{Spin}(n)$-bundle $\mathscr{SP}(\mathcal H)$ and that the $\sone$-action can be lifted on $\mathscr{SP}(\mathcal H)$. See Appendix \ref{appendix:spin}.

Along the line of thought \eqref{eq: def of p_X_global} (for the globally free case), we shall get our heat kernel on $X$ by lifting the heat kernel on $M$. Let us start with the construction of the heat kernel on $M$.


\subsection{Construction of $\bigtriangleup_{\ortho{M}}^H$, $\bigtriangleup_X^H$ for an Orbifold ${M}$}
\label{subsection: orbibundle}

Suppose that the orbifold $M$ has admitted a Riemannian metric (see e.g. \cite[Proposition 2.20]{moerdijk2003introduction}). Given an orbifold chart $(\widetilde U,G,U,\pi)$ on $M$ (with $\pi:\widetilde U\to\widetilde U/G\cong U\subset M$), we have $\widetilde U\times\sone/G \cong \pi^{-1}(U)\subset X$ locally; see Appendix \ref{appendix:adaptation} for more. It is seen that the oriented orthonormal frame bundle $\ortho{M}$ is obtained by $\ortho{M}=\ortho{\mathcal H}/\sone$ where $\mathcal H$ is the horizontal subbundle of $TX$ as just indicated above. In fact, the orthonormal frame bundle $\ortho{M}$ is a smooth manifold (cf. \cite[Section 2.3]{gordon2012orbifolds}).

\begin{rmk}
\label{rmk:Laplace_M_OM}
In the notation of Subsection \ref{subsection:Notations and Set Up}, the Bochner's horizontal Laplacian $\Laplace^H_{\ortho{M}}$ on $\ortho{M}$ is defined by $\Laplace^H_{\ortho{M}}\coloneqq \sum_{i=1}^n H_i^2$ and satisfies $\Laplace^H_{\ortho{M}}\widehat \theta(\widehat x)=\widehat x^{-1}\Laplace_M\theta(x),\; \widehat \pi\widehat x=x$ (see \cite[p. 193]{hsu2002stochastic} for $\widehat{\theta}$). Similarly\footnote{The foundation of this analogy is described in Appendix \ref{appendix:adaptation}. See also  introductory paragraphs in Subsection \ref{subsection:transversal_D_X} for Laplacians associated with Dirac operators on $X$ and $M$.}, the \textit{transversal} or \textit{horizontal} Laplacian $\Laplace^H_X$ on $X$ is defined and satisfies
\begin{equation}
\label{eq:Lplace^H_X}
\Laplace^H_X\widetilde \theta(u)=\big(\pi^\ast(\Laplace_M\theta)\big)(u),\quad \pi u=x.
\end{equation}
\end{rmk}

\begin{nota}
\label{rmk:notation_p}
Let $p_M(t,x,y)$ denote the heat kernel associated with $\Laplace_M$ on functions over $M$ (cf. Proposition \ref{prop:heat_eq} below) and $P_M(t,x,y)$ (or simply $P_M(t)$) as the corresponding operator. Namely, $P_M(t)f(x)=\int_M p_M(t,x,y)f(y)\mathrm dy$.
\end{nota}

\subsection{Transversal Heat Kernel for $\sone$-invariant Functions on ${X}$}
\label{subsection:Transversal Heat Kernel}
The construction of heat kernels on the orbifold $M$ follows the approach by Dryden \textit{et al}. in \cite[Section 3]{dryden2008asymptotic}, in which {asymptotic} solutions are constructed locally on each orbifold chart, and patched up by using partition of unity. Out of this, with the standard successive approximation the fundamental solution $p_M(t,x,y)$ on $M$ is constructed. We summarize it:
\begin{prop}
\label{prop:heat_eq}
The heat kernel $p_M(t,x,y)$ on a compact orbifold $M$ (endowed with a Riemannian metric) which is the solution to 
the following heat equation can be constructed
\begin{equation}
\label{eq:heat_eq_M}
\begin{cases}
&\displaystyle\frac{\partial }{\partial t}p_M(t,x,y)=\frac{1}{2}\Laplace_Mp_M(t,x,y),\quad (t,x,y)\in (0,\infty)\times M\times M,\\
&\displaystyle\lim_{t\downarrow 0^+}\int_M p_M(t,x,y) f(y)\mathrm dy=f(x),\quad f\in C^\infty(M).
\end{cases}
\end{equation}
\end{prop}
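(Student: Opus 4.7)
The plan is to adapt the classical Minakshisundaram--Pleijel parametrix construction to the orbifold setting, following the blueprint of \cite{dryden2008asymptotic}. The overall strategy is three-fold: (i) build a local parametrix on each orbifold chart by lifting to the smooth uniformizing cover and averaging over the local group; (ii) glue the local parametrices into a global parametrix via a partition of unity; (iii) upgrade the global parametrix to the true fundamental solution by Levi's method of successive approximation (Volterra/Duhamel).

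For step (i), fix a finite cover of $M$ by orbifold charts $(\widetilde U_\alpha, G_\alpha, U_\alpha, \pi_\alpha)$, equip each $\widetilde U_\alpha$ with the lifted $G_\alpha$-invariant Riemannian metric, and form the standard formal parametrix
\[
H_N^{\widetilde U_\alpha}(t,\widetilde x,\widetilde y) = \frac{e^{-d(\widetilde x,\widetilde y)^2/2t}}{(2\pi t)^{n/2}}\sum_{k=0}^N t^k u_k(\widetilde x,\widetilde y),
\]
where the $u_k$ are determined by the usual transport equations along minimizing geodesics and $u_0$ is the inverse square root of the Jacobian of $\exp$. Since distance, Jacobian, and the transport equations are all intrinsic, this object is diagonally $G_\alpha$-invariant. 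Symmetrizing in the second variable,
\[
H_N^{U_\alpha}(t,x,y) \coloneqq \sum_{g\in G_\alpha} H_N^{\widetilde U_\alpha}(t,\widetilde x, g\widetilde y),
\]
descends to a well-defined function on $U_\alpha\times U_\alpha$.

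For step (ii), pick a partition of unity $\{\varphi_\alpha\}$ subordinate to $\{U_\alpha\}$ together with cutoffs $\psi_\alpha \equiv 1$ near $\mathrm{supp}\,\varphi_\alpha$ and compactly supported in $U_\alpha$, and set
\[
H_N(t,x,y) \coloneqq \sum_\alpha \varphi_\alpha(x)\, H_N^{U_\alpha}(t,x,y)\, \psi_\alpha(y).
\]
A direct computation, performed on each cover before averaging, shows that the error $R_N \coloneqq \bigl(\partial_t - \tfrac12\Laplace_M\bigr)H_N$ is of size $O(t^{N-n/2})$ uniformly in $(x,y)$, with analogous estimates on spatial derivatives. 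Step (iii) then forms the Volterra series
\[
p_M(t,x,y) \coloneqq H_N(t,x,y) + \sum_{k\geq 1} (H_N \ast R_N^{\ast k})(t,x,y),
\]
where $\ast$ is space-time convolution against the orbifold volume measure on $(0,t)\times M$. For $N$ large, compactness of $M$ together with the Gaussian smoothing of each convolution yields absolute, uniform convergence, smoothness, and the heat equation $\bigl(\partial_t - \tfrac12\Laplace_M\bigr)p_M = 0$.

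The initial condition is verified as follows: on a chart, the $g=\mathrm{id}$ term in the averaged parametrix has the Euclidean Gaussian profile and produces the delta-function limit, while each $g\neq\mathrm{id}$ term contributes an exponentially small Gaussian centered at a point at positive distance from $\widetilde x$ (at least whenever $g\widetilde x\neq\widetilde x$). At a point $x$ with nontrivial isotropy $G_x$, the terms indexed by $g\in G_x$ concentrate at $\widetilde x$ itself, producing a factor $|G_x|$ which is exactly cancelled by the $|G_y|^{-1}$ weighting built into the orbifold volume measure $\mathrm{d}y$. The hard part, I expect, is precisely this bookkeeping at the singular strata: one must check that, uniformly in $x$ approaching a higher-stratum point, the non-identity contributions from $G_\alpha\setminus G_x$ stay negligible as $t\downarrow 0^+$, and that the Levi convolution correction is of order $o(1)$ in the sense of testing against $f\in C^\infty(M)$. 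Once these uniform estimates are in place, $\lim_{t\downarrow 0^+}\int_M p_M(t,x,y)f(y)\,\mathrm{d}y = f(x)$ reduces to the classical Euclidean statement applied chart by chart.
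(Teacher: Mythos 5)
Your three steps --- the chartwise Minakshisundaram--Pleijel parametrix lifted to $\widetilde U_\alpha$, averaged over $G_\alpha$, glued by cutoffs and a partition of unity, and then corrected by the Levi/Volterra series --- are precisely the construction of Dryden \emph{et al.} that the paper invokes for this proposition and recapitulates in \eqref{eq:asymp_H}--\eqref{eq:asymp_expand} and Remark \ref{rmk:local_heat_kernel}; so in outline you and the paper coincide.

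The one place where your argument as written goes astray is the very step you single out as ``the hard part,'' the initial condition. Using the local expression of the orbifold Riemannian measure, $\int_{U_\alpha}F(y)\,\mathrm dy=\tfrac{1}{|G_\alpha|}\int_{\widetilde U_\alpha}F(\pi_\alpha\widetilde y)\,\mathrm d\mathrm{vol}(\widetilde y)$, and the substitution $\widetilde z=g\widetilde y$ together with the $G_\alpha$-invariance of the lift $\widetilde f$, one finds that \emph{every} $g\in G_\alpha$ contributes $\widetilde f(\widetilde x)+o(1)$ to $\int_{U_\alpha}H^{U_\alpha}_N(t,x,y)f(y)\,\mathrm dy$: the term indexed by $g$ is a Gaussian in $\widetilde y$ centered at $g^{-1}\widetilde x$, which still lies in $\widetilde U_\alpha$, so the $g\notin G_{\widetilde x}$ terms are \emph{not} exponentially small after integration in $y$ (that smallness is the correct statement only for the pointwise on-diagonal asymptotics, cf. Remark \ref{rmk:x_0}, not for the delta-limit tested against $f$). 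The factor $|G_\alpha|$ thus produced is cancelled by the prefactor $|G_\alpha|^{-1}$ of the measure, uniformly in $x$ and with no special role played by $|G_x|$ or by any pointwise ``$|G_y|^{-1}$ weighting''; in particular the uniformity issue you anticipate as $x$ approaches a lower stratum does not arise once the limit is computed through this unfolding, and the Volterra correction is $O(t^{N+1-n/2})$ in sup norm, hence harmless. A smaller point of care: relative to \eqref{eq:asymp_H} you have interchanged the roles of the cutoff and the partition of unity between the two variables; the cutoff (the function that is $\equiv1$ on a neighborhood of the other factor's support) must sit in the variable on which $\Laplace_M$ acts, so that its derivatives meet only off-diagonal Gaussian factors and the error $R_N$ is genuinely $O(t^{N-n/2})$ --- with your arrangement this forces you to run the parametrix construction in the $y$-variable, or else to swap $\varphi_\alpha$ and $\psi_\alpha$. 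With these adjustments your proof is exactly the cited one.
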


Recall $\Laplace_X^H$ of \eqref{eq:Lplace^H_X}. Our goal now is to construct the heat kernel $p_X(t,u,v)$ that satisfies the following \textit{transversal} heat equation on $X$:
\begin{equation}
\label{eq:heat_eq_on_X}
\begin{cases}
&\displaystyle\frac{\partial }{\partial t}p_X(t,u,v)=\frac{1}{2}\Laplace^H_Xp_X(t,u,v),\quad (t,u,v)\in (0,\infty)\times X\times X, \\
&\displaystyle\lim_{t\downarrow 0^+}\int_Xp_X(t,u,v)\omega(v)\mathrm dv=\mathcal{P}_0\omega(u),\quad \omega\in C^\infty (X).
\end{cases}
\end{equation}
The operator $\mathcal P_0$ in \eqref{eq:heat_eq_on_X} above is given as follows.

\begin{defn}
\label{defn:projection}
The projection operator $\mathcal P_0$ on $\omega\in\Omega(X)\big(\supset C^\infty(X)\big)$, the space of smooth differential forms on X, is defined by
\begin{equation*}
\mathcal{P}_0\omega(v) =\frac{1}{2\pi}\int_{\sone}(e^{-i\xi})^\ast\omega (v)\mathrm d\xi.
\end{equation*}
Here $(e^{-i\xi})^\ast\omega (v)$ means the pull-back $\big((e^{-i\xi})^\ast\omega\big)(v)$ by the action $e^{-i\xi}$ on $X$.
\end{defn}

We are going to construct $p_X(t,u,v)$ of \eqref{eq:heat_eq_on_X} using the heat kernel $p_M(t,x,y)$ of \eqref{eq:heat_eq_M}.\comment{According to \cite[Section 3]{dryden2008asymptotic} the heat kernel $p_M(t,x,y)$ (on functions) satisfying the following equation, is constructed.} We claim that by defining
\begin{equation}
\label{def:p_X}
p_X(t,u,v)\coloneqq \frac{1}{2\pi}p_M(t,\pi u,\pi v),
\end{equation}
the transversal heat kernel $p_X(t,u,v)$ is a solution to the {transversal} heat equation \eqref{eq:heat_eq_on_X}.

\begin{prop}(Existence)
\label{prop:existence_p_X}
$p_X(t,u,v)$ of \eqref{def:p_X} satisfies the transversal heat equation \eqref{eq:heat_eq_on_X}.
\end{prop}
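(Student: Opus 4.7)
The plan is to exploit the $\sone$-invariance of $p_X(t,u,v)$ in each of its spatial variables --- it is, up to the factor $1/(2\pi)$, the $\pi\times\pi$-pullback of the orbifold heat kernel $p_M$ --- and to split the verification into the parabolic equation and the initial condition.

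\emph{For the PDE}, I would fix $v$ and observe that $u\mapsto p_M(t,\pi u,\pi v)$ is the pullback via $\pi$ of $x\mapsto p_M(t,x,\pi v)\in C^\infty(M)$, which is smooth on $X$ because $\pi$ is smooth and the orbifold heat kernel is smooth in the orbifold sense. Applying the identity \eqref{eq:Lplace^H_X} of Remark \ref{rmk:Laplace_M_OM} in the $u$-variable converts $\Laplace^H_X$ acting on $p_X$ into the pullback of $\Laplace_M$ acting on $p_M(t,\cdot,\pi v)$:
\[
\Laplace^H_X p_X(t,u,v) \;=\; \frac{1}{2\pi}\bigl(\Laplace_M p_M(t,\cdot,\pi v)\bigr)(\pi u).
\]
The parabolic equation for $p_M$ in Proposition \ref{prop:heat_eq} then yields $\partial_t p_X=\tfrac12\Laplace^H_X p_X$ with essentially no further work.

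\emph{For the initial condition}, I would decompose an arbitrary $\omega\in C^\infty(X)$ as $\omega=\mathcal{P}_0\omega+\omega'$, with $\omega':=\omega-\mathcal{P}_0\omega$ having vanishing $\sone$-average on every orbit. Since $p_X(t,u,v)$ is $\sone$-invariant in $v$, fiber-integration along the $\sone$-orbits annihilates the $\omega'$-contribution:
\[
\int_X p_X(t,u,v)\,\omega'(v)\,dv \;=\; 0.
\]
The remaining $\sone$-invariant piece $\mathcal{P}_0\omega$ descends to a function $F\in C^\infty(M)$ with $F\circ\pi=\mathcal{P}_0\omega$. Fiber-integration under the unit-norm normalization of $T$ gives each fiber $\pi^{-1}(y)$ length $2\pi/|H_y|$, while the orbifold measure on $M$ used to construct $p_M$ (built from local uniformizing charts, cf. Appendix \ref{appendix:adaptation}) carries the compensating isotropy weight $1/|H_y|$. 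Together with the $1/(2\pi)$ in the definition of $p_X$ these combine to give
\[
\int_X p_X(t,u,v)\,\omega(v)\,dv \;=\; \int_M p_M(t,\pi u,y)\,F(y)\,dy,
\]
and letting $t\downarrow 0^+$ invokes the initial condition of $p_M$ from Proposition \ref{prop:heat_eq} to produce $F(\pi u)=\mathcal{P}_0\omega(u)$, as required.

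\emph{Main obstacle.} The only delicate step is the consistent bookkeeping of the measures: the fiber-length factor $2\pi/|H_y|$ on the $X$ side must pair correctly with the isotropy-weighted orbifold integral on $M$ under which $p_M$ is the fundamental solution in Proposition \ref{prop:heat_eq}, so that the prefactor $1/(2\pi)$ in the definition of $p_X$ is exactly what is needed. Once these conventions, summarized in Remark \ref{rmk:Laplace_M_OM} and Appendix \ref{appendix:adaptation}, are assembled coherently, the verification becomes a transparent consequence of the $\sone$-invariance of $p_X$ and of the already established properties of $p_M$.
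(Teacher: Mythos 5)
Your argument is correct and follows essentially the same route as the paper: the parabolic equation is obtained from the lifting identity \eqref{eq:Lplace^H_X} together with \eqref{eq:heat_eq_M} and the definition \eqref{def:p_X}, exactly as the paper indicates, and your treatment of the initial condition (splitting $\omega$ into $\mathcal{P}_0\omega$ plus a part with vanishing orbit average, then fiber integration under the unit-norm normalization of $T$) is just the detailed version of the verification the paper declares straightforward and omits. The measure bookkeeping you flag works out as you describe, since the lower-dimensional strata have measure zero and the $1/(2\pi)$ prefactor matches the fiber length on the principal stratum.
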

\begin{proof}
The first equation of \eqref{eq:heat_eq_on_X} follows from \eqref{eq:Lplace^H_X}, \eqref{eq:heat_eq_M}, \eqref{def:p_X} and our normalization of the $\sone$-invariant metric along the $\sone$-orbit. For the second equation, the verification is straightforward and is omitted.
\end{proof}

The uniqueness of the heat kernel (see Theorem \ref{thm:uniqueness}) is essential for the index theorem. To reach it, some preparations are in order. These are going to be useful in Subsection \ref{subsection:transversal_D_X}. For another important use, see \eqref{eq:kernel computation_integral} of Subsection \ref{subsection:Prob} and the paragraph at the end of this paper.

To start with, let us introduce the notion for the $m$-th Fourier component. Note that $\mathcal{P}_0\omega = \omega_0$ (cf. Definition \ref{defn:projection}) can be seen as the $m$-th Fourier component with $m=0$. We set up the following.

\begin{defn}
\label{def:Fourier_component_X}
For every $m\in\mathbb{Z}$ define 
\begin{equation}
\Omega_m (X)\coloneqq \left\{ f\in\Omega (X): (e^{-i\zeta})^\ast f = e^{-im\zeta}f,\quad \forall \zeta\in [0,2\pi) \right\}
\end{equation}
as the space of all $m$-th Fourier components where $(e^{-i\zeta})^\ast$ is the pull-back map.
\end{defn}
Some basic facts are well known.
\begin{lemma}
\label{lemma:Fourier_decomposition_X}
Every $\widetilde \theta\in\Omega(X)$ can be decomposed into series of $\{ \widetilde\theta_m\in\Omega_m(X) \}_{m\in\mathbb{Z}}$ in the $L^2$-sense. 
\end{lemma}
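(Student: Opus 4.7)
The plan is to produce each Fourier component explicitly by averaging $\widetilde\theta$ against the characters of $\sone$, verify that the averaged form lies in the correct isotypic subspace, and then invoke the $L^2$-theory of Fourier series on the compact abelian group $\sone$ applied to the natural unitary action on forms. Concretely, for $\widetilde\theta\in\Omega(X)$ and each $m\in\mathbb{Z}$, I would put
$$\widetilde\theta_m\coloneqq \frac{1}{2\pi}\int_{\sone} e^{im\zeta}\,(e^{-i\zeta})^\ast\widetilde\theta\,\mathrm d\zeta,$$
observing that the integrand is jointly smooth in $(\zeta,v)\in\sone\times X$ because the $\sone$-action on $X$ is smooth, so $\widetilde\theta_m\in\Omega(X)$. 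In particular $\widetilde\theta_0$ agrees with the projector $\mathcal P_0\widetilde\theta$ of Definition \ref{defn:projection}.

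The first step would be to check that $\widetilde\theta_m\in\Omega_m(X)$. Applying $(e^{-i\xi})^\ast$ inside the integral, using functoriality $(e^{-i\xi})^\ast(e^{-i\zeta})^\ast=(e^{-i(\zeta+\xi)})^\ast$, and then substituting $\eta=\zeta+\xi$ on $\sone$ (which leaves Haar measure invariant), one obtains
$$(e^{-i\xi})^\ast\widetilde\theta_m=\frac{1}{2\pi}\int_{\sone}e^{im(\eta-\xi)}(e^{-i\eta})^\ast\widetilde\theta\,\mathrm d\eta=e^{-im\xi}\widetilde\theta_m,$$
which is precisely the defining property of $\Omega_m(X)$.

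The second step is to establish the $L^2$-convergence of $\sum_m \widetilde\theta_m$ to $\widetilde\theta$. Endow $X$ with the $\sone$-invariant Riemannian metric already fixed in Section \ref{sec:const} and form the Hilbert space $\mathcal L^2\coloneqq L^2(X,\Lambda^\bullet T^\ast X)$ with the induced inner product; since both the metric and the volume form are $\sone$-invariant, the pullback group $\zeta\mapsto(e^{-i\zeta})^\ast$ acts as a strongly continuous unitary one-parameter group on $\mathcal L^2$. The classical Fourier theorem for the compact abelian group $\sone$ (equivalently Peter--Weyl) then gives the Hilbert orthogonal decomposition
$$\mathcal L^2=\widehat{\bigoplus}_{m\in\mathbb{Z}}\mathcal L^2_m,$$
where $\mathcal L^2_m$ is the closure of $\Omega_m(X)$ in $\mathcal L^2$, and the orthogonal projection onto $\mathcal L^2_m$ is precisely the averaging operator defining $\widetilde\theta_m$. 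Hence the partial sums converge to $\widetilde\theta$ in the $L^2$-norm, as claimed.

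I do not expect a genuine analytic obstacle: the only minor subtlety is the identification of the Fourier-averaging operator with the $m$-th isotypic projector, which drops out of the orthogonality relation $\int_{\sone}e^{i(m-k)\zeta}\,\mathrm d\zeta=2\pi\delta_{mk}$ applied to the formal Fourier series of $(e^{-i\zeta})^\ast\widetilde\theta$. Everything else (smoothness under the integral, change of variables on $\sone$, unitarity of the representation) is immediate from the smooth $\sone$-action together with the invariance of Haar measure, so the lemma reduces to the standard Fourier analysis on $\sone$ applied fibrewise to the finite-rank bundle $\Lambda^\bullet T^\ast X$.
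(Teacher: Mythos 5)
Your proposal is correct and follows essentially the same route as the paper, which defines $\widetilde\theta_m(u)=\frac{1}{2\pi}\int_{\sone}(e^{-i\zeta})^\ast\widetilde\theta(u)\,e^{im\zeta}\,\mathrm d\zeta$ and asserts the $L^2$-decomposition as an easy verification; you simply supply the routine details (equivariance of the averaged component via invariance of Haar measure, and $L^2$-convergence from the unitary $\sone$-representation on square-integrable forms) that the paper leaves implicit.
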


\begin{proof}
It is easy to verify that $\widetilde\theta=\sum_{m\in\mathbb{Z}}\widetilde\theta_m$ where $\displaystyle\widetilde\theta_m(u)\coloneqq \frac{1}{2\pi}\int_\sone (e^{-i\zeta})^\ast\widetilde\theta (u)e^{im\zeta}\mathrm d\zeta$.
\end{proof}

There is the following orthogonality between $\Omega_0(X)$ and $\Omega_m(X)$ for $m\neq 0$, whose verification is straightforward and will be omitted.
\begin{prop}
\label{prop:ortho}
We have $\Omega_0(X)\perp \Omega_m(X)$ for $m\neq 0$. The operator $P_X(t)$ (cf. Notation \ref{rmk:notation_p}) annihilates the non-$\sone$-invariant part of $f\in\Omega(X)$, i.e., $P_X(t)f_m=0$ for $f_m\in\Omega_m(X)$ with $m\neq 0$, and $P_X(t)\big(\Omega(X)\big)\subset\Omega_0(X)$.
\end{prop}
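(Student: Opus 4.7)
The proposition has two parts, both following from the $\sone$-invariance built into our setup. I would handle the orthogonality first, then use it to obtain the annihilation statement, and finally note the invariance in the first argument.

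For the orthogonality $\Omega_0(X)\perp \Omega_m(X)$ ($m\neq 0$), let $f\in\Omega_0(X)$ and $g\in\Omega_m(X)$, and denote by $\phi_\zeta:X\to X$ the action of $e^{-i\zeta}$, which is an isometry (so $\phi_\zeta^\ast$ preserves the pointwise Hermitian pairing on forms and $\phi_\zeta$ preserves the Riemannian volume form). The plan is to compute $\langle f,g\rangle_X$ in two ways: directly, and after the change of variable $v\mapsto \phi_\zeta(v)$. The isometry property gives
\[
\langle f,g\rangle_X=\int_X \langle \phi_\zeta^\ast f,\phi_\zeta^\ast g\rangle_v\,dv=\int_X \langle f, e^{-im\zeta}g\rangle_v\, dv = e^{im\zeta}\langle f,g\rangle_X
\]
for every $\zeta$, which forces $\langle f,g\rangle_X=0$. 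This argument is pointwise in degree and applies equally to functions and to forms.

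For the annihilation $P_X(t)f_m=0$ when $m\neq 0$, I would plug the formula \eqref{def:p_X} into the defining integral. For fixed $u$, the map $v\mapsto p_M(t,\pi u,\pi v)$ depends on $v$ only through $\pi v$, hence is $\sone$-invariant; in other words it belongs to $\Omega_0(X)$. Thus
\[
P_X(t)f_m(u)=\frac{1}{2\pi}\int_X p_M(t,\pi u,\pi v)\,f_m(v)\,dv
\]
is an $L^2$ pairing between an element of $\Omega_0(X)$ and an element of $\Omega_m(X)$, and vanishes by the orthogonality just established. (If one prefers not to invoke the inner product, the same conclusion follows by direct averaging: replace $v$ by $\phi_\zeta(v)$ in the integral and integrate over $\zeta\in[0,2\pi)$; the $f_m$ contributes a factor $\frac{1}{2\pi}\int_0^{2\pi} e^{-im\zeta}\,d\zeta=0$.)

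For the inclusion $P_X(t)\bigl(\Omega(X)\bigr)\subset \Omega_0(X)$, I would simply observe that $P_X(t)f(u)=\frac{1}{2\pi}\int_X p_M(t,\pi u,\pi v)f(v)\,dv$ depends on $u$ only through $\pi u$, since $\pi\circ\phi_\zeta=\pi$. Hence $\phi_\zeta^\ast(P_X(t)f)=P_X(t)f$, so $P_X(t)f\in\Omega_0(X)$. There is no serious obstacle here; the only minor point to flag is the book-keeping for general differential forms (since $p_X$ is a scalar kernel), but that is absorbed into the pointwise use of the Hermitian structure and the $\sone$-invariance of the volume form.
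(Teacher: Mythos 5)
Your proposal is correct: the change-of-variables argument using the isometric $\sone$-action for the orthogonality, the averaging (or $L^2$-pairing) argument for $P_X(t)f_m=0$, and the observation that $P_X(t)f$ depends on $u$ only through $\pi u$ are exactly the routine verifications intended here — the paper itself omits the proof as ``straightforward.'' The only point worth keeping in mind, which you already flag, is the bookkeeping for higher-degree forms (the kernel $p_X$ being scalar), and possibly the harmless sign convention $e^{\pm im\zeta}$ depending on which slot of the Hermitian pairing is conjugate-linear; neither affects the conclusion.
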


\subsection{Transversal Heat Kernel $p_{X}^{D}(t,u,v)$ for $\Box_{X}^{D}$ on  ${X}$}
\label{subsection:transversal_D_X}
Most treatment here is parallel to the preceding subsection. Let $X$ be as in the beginning of Section \ref{sec:const}. A \textit{transversal} or \textit{horizontal} Dirac operator can be constructed in a way parallel to \eqref{eq:Dirac_composition}:
\begin{equation}
\label{eq:Dirac_composition_new}
D_{X}:\Omega(G)\overset{\nabla}{\longrightarrow}\Omega(\mathcal H^\ast\otimes G)\overset{\text{dual}}{\longrightarrow}\Omega(\mathcal H\otimes G)\overset{c}{\longrightarrow}\Omega(G),
\end{equation}
where $G$, with an $\sone$-equivariant complex vector bundle $\xi$ with an $\sone$-invariant connection $\nabla^{\xi}$, 
is analogously defined. Denote by $D_X^\pm$ the operator $D_X:\Omega(G^\pm)\to \Omega(G^\mp)$ (cf. \eqref{eq:Dirac_composition}) and $\DLaplace_X^D\coloneqq -D_X^2$.

The assumption that the $\sone$-action on $X$ is of transversally even type allows us to have a Dirac operator $D_M$ on $M$ induced by the above transversal Dirac operator $D_X$ on $X$ (cf. \cite[Subsection 2.4, p. 629]{lott2000signatures}), which naturally leads to a compatibility relation\footnote{Note that the set $S$ of orbifold points in $M$ (i.e. the union of lower-dimensional strata) presents no serious problem here, because the compatibility relation naturally seen on $M\setminus S$ extends across $S$ to the whole $M$ by using continuity, as those operators are globally defined on $X$ and $M$.} between $-D_X^2$ and $-D_M^2$ (via $\pi: X\to M$) analogous to the relation \eqref{eq:Lplace^H_X}.

\begin{nota} (cf. Notation \ref{rmk:notation_p})
\label{rmk:notation_p^D}
Denote by $p_M^D(t,x,y)$ the heat kernel associated with $-D_M^2=:\DLaplace_M^D$ on the twisted bundle $G$, and $P_M^D(t,x,y)$ (or simply $P_M^D(t)$) as the corresponding operator. Here $G$, in the same notation as in \eqref{eq:Dirac_composition_new} on $X$, denotes the naturally induced bundle on $M$.
\end{nota}

The existence of $p_M^D(t,x,y)$ above is well known (see e.g. \cite[Chapter 14]{duistermaat2013heat}).\comment{, which we will use to prove the existence of $p_X^D(t,u,v)$, the heat kernel of the $\sone$-transversally elliptic operator $P_X^D(t,u,v)$ (cf. \cite[Section 2.2]{paradan2009index} for more general treatment).

} Given $p_M^D(t,x,y)$, the transversal heat kernel $p_X^D(t,u,v)$ and the corresponding operator $P_X^D(t)$ can be obtained by lifting $p_M^D(t,x,y)$ to $X$ in a way similar to \eqref{def:p_X}. We conclude with the following result (cf. Proposition \ref{prop:existence_p_X}).

\begin{prop}
\label{prop:heat_eq_D_X}
With the assumptions and notations above, the transversal heat kernel $p^D_X(t,u,v)$ for $-D_X^2=:\DLaplace_X^D$ satisfying
\begin{equation}
\label{eq:heat_eq_for_p_X^D}
\begin{cases}
&\displaystyle\frac{\partial }{\partial t}p^D_X(t,u,v)=\frac{1}{2}\DLaplace_X^D\,p^D_X(t,u,v),\quad (t,u,v)\in(0,\infty)\times X\times X\\
&\displaystyle\lim_{t\downarrow 0^+}\int_Xp^D_X(t,u,v)\omega(v)\mathrm dv=\mathcal{P}_0^D\omega(u),\quad \omega\in\Omega(G),
\end{cases}
\end{equation}
can be constructed, where $\mathcal P_0^D$ denotes the projection (defined in a way similar to Definition \ref{defn:projection}).
\end{prop}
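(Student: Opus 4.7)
The plan is to construct $p_X^D(t,u,v)$ by lifting the orbifold heat kernel $p_M^D(t,x,y)$ through $\pi: X \to M$, mimicking \eqref{def:p_X} and the proof of Proposition \ref{prop:existence_p_X}. Concretely, I would set
\[
p_X^D(t,u,v) := \frac{1}{2\pi}\,p_M^D(t,\pi u,\pi v),
\]
interpreting the right-hand side as a section of $\mathrm{Hom}(G_v,G_u)$ via the canonical identification of fibers of $G\to X$ with those of the bundle induced on $M$. This identification is available because the $\sone$-action is of transversally even type (so $\sone$ lifts to $\mathscr{SP}(\mathcal H)$ and hence to $\mathscr S(\mathcal H)$) and $\xi$ is $\sone$-equivariant with an $\sone$-invariant connection. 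The prefactor $\tfrac{1}{2\pi}$ absorbs the metric normalization $\mathrm{vol}(\sone)=2\pi$ along $\sone$-orbits.

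For the first line of \eqref{eq:heat_eq_for_p_X^D}, I would apply $\tfrac{1}{2}\DLaplace_X^D$ to $p_X^D$ in the $u$-variable and invoke the compatibility relation between $-D_X^2$ and $-D_M^2$ under $\pi$ (the Dirac analogue of \eqref{eq:Lplace^H_X}, mentioned just before Notation \ref{rmk:notation_p^D}); this reduces the computation to $\partial_t p_M^D = \tfrac12\DLaplace_M^D p_M^D$, which holds by construction of $p_M^D$ on the orbifold (see e.g.\ \cite[Chapter 14]{duistermaat2013heat}). For the initial condition, I would decompose $\omega\in\Omega(G)$ into $\sone$-Fourier components $\omega = \sum_{m\in\mathbb{Z}}\omega_m$ in the spirit of Lemma \ref{lemma:Fourier_decomposition_X}, using the $\sone$-action lifted to $G$. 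Since $p_X^D(t,u,v)$ depends on $v$ only through $\pi v$, it is $\sone$-invariant in $v$; hence the integral against $\omega_m$ vanishes for $m\ne 0$ via $\int_{\sone}e^{-im\zeta}\mathrm d\zeta = 0$. For $m=0$, $\omega_0$ descends to $M$ and Fubini with the $\tfrac{1}{2\pi}$ factor gives
\[
\int_X p_X^D(t,u,v)\,\omega_0(v)\,\mathrm dv \;=\; \int_M p_M^D(t,\pi u,y)\,\omega_0(y)\,\mathrm dy \;\xrightarrow{t\downarrow 0^+}\; \omega_0(\pi u) \;=\; \mathcal P_0^D\omega(u),
\]
by the known initial condition of $p_M^D$.

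The main technical hurdle is to make the identification of $G\to X$ with the induced bundle on $M$ rigorous across the lower-dimensional strata $S\subset M$ where the isotropy groups $H_u$ are nontrivial. Over $\pi^{-1}(M\setminus S)$ the identification is immediate from the free-action case treated in Subsection \ref{subsection:Atiyah-Singer Index Theorem}; across $S$ one would invoke the transversally even hypothesis together with a continuity argument in the same spirit as the footnote justifying the $-D_X^2$/$-D_M^2$ compatibility — both operators are globally defined and smooth on $X$ resp.\ $M$, so a relation holding on the open dense set $\pi^{-1}(M\setminus S)$ extends to all of $X$. Once this identification is properly installed, the three verifications above proceed in direct parallel with the proof of Proposition \ref{prop:existence_p_X}.
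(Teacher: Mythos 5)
Your proposal is correct and follows essentially the same route as the paper: the authors also obtain $p_X^D$ by lifting the orbifold heat kernel $p_M^D$ (whose existence they quote from Duistermaat, Chapter 14) via $p_X^D(t,u,v)=\frac{1}{2\pi}p_M^D(t,\pi u,\pi v)$, verify the heat equation through the compatibility of $-D_X^2$ with $-D_M^2$ under $\pi$, and handle the initial condition through the $\sone$-Fourier decomposition exactly as in Proposition \ref{prop:existence_p_X}. Your added care about identifying $G$ with the induced bundle on $M$ across the lower-dimensional strata matches the continuity argument in the paper's footnote and merely makes explicit what the paper leaves implicit.
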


For our use later on, here are some results corresponding to those in Subsection \ref{subsection:Transversal Heat Kernel}.

\begin{defn}
\label{def:Fourier_component_G}
For $m\in\mathbb{Z}$ define $\Omega_m(G)\coloneqq \{ f\in\Omega(G):(e^{-i\zeta})^\ast f=e^{-im\zeta}f,\quad\forall \zeta\in [0,2\pi)\}$ as the space of all $m$-th Fourier components.
\end{defn}

\begin{prop}
\label{prop:ortho_G}
Every $\widetilde\theta\in\Omega(G)$ can be decomposed into series of $\{\widetilde\theta_m\in\Omega_m(G)\}_{m\in\mathbb{Z}}$ in the $L^2$-sense. We have $\Omega_0(G)\perp\Omega_m(G)$ for $m\neq 0$. The operator $P_X^D(t)$ annihilates the non-$\sone$-invariant part of $f\in\Omega(G)$, i.e., $P_X^D(t)f_m=0$ for $f_m\in\Omega_m(G)$ with $m\neq 0$, and $P_X^D(t)\big(\Omega(G)\big)\subset\Omega_0(G)$.
\end{prop}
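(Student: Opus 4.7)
The plan is to mirror, almost verbatim, the arguments establishing Lemma \ref{lemma:Fourier_decomposition_X} and Proposition \ref{prop:ortho}, transposing them from the scalar setting $\Omega(X)$ to the $G$-valued setting $\Omega(G)$. The one extra ingredient needed is that the pullback $(e^{-i\zeta})^\ast$ makes sense on sections of $G = \mathscr{S}(\mathcal{H}) \otimes \xi$. This is precisely what the standing assumptions supply: the transversally spin structure together with the transversally even type assumption yields a lift of the $\sone$-action from $X$ to $\mathscr{SP}(\mathcal{H})$, hence to $\mathscr{S}(\mathcal{H})$, and $\xi$ is $\sone$-equivariant by hypothesis. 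Consequently $G$ inherits an $\sone$-action and $(e^{-i\zeta})^\ast s$ is a well-defined section of $G$ for every $s \in \Omega(G)$.

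Granted this, I would define the $m$-th Fourier component of $\widetilde\theta \in \Omega(G)$ by
$$\widetilde\theta_m(u) := \frac{1}{2\pi}\int_\sone (e^{-i\zeta})^\ast\widetilde\theta(u)\, e^{im\zeta}\, \mathrm d\zeta,$$
with the integrand taking values in $G_u$ via the lifted $\sone$-action. A change of variable $\zeta \mapsto \zeta + \zeta'$ gives $(e^{-i\zeta'})^\ast\widetilde\theta_m = e^{-im\zeta'}\widetilde\theta_m$, so $\widetilde\theta_m \in \Omega_m(G)$; the decomposition $\widetilde\theta = \sum_m \widetilde\theta_m$ in $L^2$ is then the classical fiberwise Fourier expansion applied along each $\sone$-orbit. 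For orthogonality, I use that both the Hermitian structure on $G$ and the volume form on $X$ are $\sone$-invariant. Hence for $f_0 \in \Omega_0(G)$ and $f_m \in \Omega_m(G)$ with $m \neq 0$, the global $L^2$-pairing $\langle f_0, f_m \rangle$ is invariant under pullback by $e^{-i\zeta}$, while by the defining transformation rules this pullback equals $e^{-im\zeta}\langle f_0, f_m \rangle$ for every $\zeta$, which forces $\langle f_0, f_m \rangle = 0$.

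For the last assertion, the initial condition in \eqref{eq:heat_eq_for_p_X^D} is $\mathcal{P}_0^D\omega$, which lies in $\Omega_0(G)$ by definition and vanishes on any $f_m \in \Omega_m(G)$ with $m \neq 0$. By uniqueness of solutions to \eqref{eq:heat_eq_for_p_X^D} (the $G$-valued counterpart of Theorem \ref{thm:uniqueness}), it follows that $P_X^D(t)f_m = 0$ for all $t > 0$. Moreover, since $D_X$ is $\sone$-equivariant, so is $-D_X^2$, and hence $P_X^D(t)$ commutes with the $\sone$-action; combined with the projection built into the initial condition, this forces $P_X^D(t)\big(\Omega(G)\big) \subset \Omega_0(G)$. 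The only point demanding care, and arguably the main subtlety, is the global well-definedness of the $\sone$-action on $G$ under the merely locally free (not globally free) action with orbifold quotient $M$; but this is handled precisely by the transversally spin and transversally even type hypotheses, so no genuinely new analytic input is required beyond what was used for Proposition \ref{prop:ortho}.
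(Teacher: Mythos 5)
Your first two assertions (the $L^2$ Fourier decomposition and the orthogonality $\Omega_0(G)\perp\Omega_m(G)$) are argued exactly as the paper intends: the only new point is the existence of a lifted $\sone$-action on $G$, which you correctly extract from the transversally spin, transversally even type and $\sone$-equivariance hypotheses, and then the averaging formula and the invariance of the $\sone$-invariant metric and volume give the result just as in Lemma \ref{lemma:Fourier_decomposition_X} and Proposition \ref{prop:ortho}.

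The gap is in your treatment of the third assertion. You deduce $P_X^D(t)f_m=0$ from ``uniqueness of solutions to \eqref{eq:heat_eq_for_p_X^D}'', but no such uniqueness is available at this stage, and invoking it is circular within the paper's architecture: Theorem \ref{thm:uniqueness} is proved \emph{using} Proposition \ref{prop:ortho_G}, and Remark \ref{rmk:uniqueness_condition} makes clear that the annihilation of $\Omega_m(G)$, $m\neq 0$ --- the very property you are trying to establish --- is part of the hypotheses under which the uniqueness argument runs. Nor can you appeal to generic parabolic uniqueness for the Cauchy problem, because $\DLaplace_X^D=-D_X^2$ is only transversally elliptic (degenerate along the orbit direction), which is precisely why the paper treats uniqueness with care. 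The intended argument is direct and needs no uniqueness: by construction (Proposition \ref{prop:heat_eq_D_X}, in analogy with \eqref{def:p_X}) the kernel $p_X^D(t,u,v)$ is the lift $\frac{1}{2\pi}p_M^D(t,\pi u,\pi v)$ of the kernel on $M$, hence is invariant under the (lifted) $\sone$-action in the $v$-variable; decomposing the integral over $X$ into an integral over orbits, the fiber integral against $f_m$ produces a factor $\int_0^{2\pi}e^{-im\zeta}\,\mathrm d\zeta=0$ for $m\neq 0$, and for arbitrary $f$ the output $P_X^D(t)f$ is manifestly a pullback from $M$, hence lies in $\Omega_0(G)$. (Your final sentence has the same issue in miniature: $\sone$-equivariance of $P_X^D(t)$ alone only shows it preserves each $\Omega_m(G)$, so the containment of the range in $\Omega_0(G)$ again rests on the annihilation statement, which must be proved by the kernel computation above rather than by uniqueness.)
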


Consider $D_{X}^{\pm}\big|_{\Omega_0(G^\pm)}:\Omega_0(G^\pm)\to\Omega_0(G^\mp)$ (still denoted by $D_X^\pm$ here) with $\DLaplace_X^{D,\pm}\coloneqq -(D_X^\pm)^2$. The corresponding transversal heat kernels are denoted by $p_X^{D,\pm}(t,u,v)$ respectively (cf. Notation \ref{rmk:notation_p^D}).

\begin{nota}
\label{nota:projection_D_pm}
Denote by $P_X^{D,\pm}(t)$ the operators associated with $p_X^{D,\pm}(t,u,v)$ (cf. Notation \ref{rmk:notation_p^D}) and by $\mathcal P_0^{D, \pm}$ the corresponding projections (cf. Definition \ref{defn:projection}).
\end{nota}

\comment{Likewise, the above results also hold true for (transversal) heat kernels $p_{X,0}^{D,\pm}(t,u,v)$ for $\DLaplace_{X,0}^{D,\pm}\coloneqq -(D_{X,0}^{\pm})^2$ on $X$. We denote the corresponding projections by $\mathcal P_0^{D,\pm}$ (cf. \eqref{eq:heat_eq_for_p_X^D}).}

We are now ready to focus on the uniqueness of the heat kernel $p_{X}^{D,+}(t,u,v)$; the counterpart for $p_X^{D,-}(t,u,v)$ can be done in parallel. See \cite[Theorem 5.14]{cheng2015heat} for the uniqueness in the context of CR manifolds with $\sone$-action.


\begin{thm}(Uniqueness)
\label{thm:uniqueness}
The transversal heat kernel $p_{X}^{D,+}(t,u,v)$ is unique (in the sense of Remark \ref{rmk:uniqueness_condition} below). In particular, this implies that the associated operator $P_X^{D,+}(t)$ is self-adjoint on $X$.
\end{thm}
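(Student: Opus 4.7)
The plan is to reduce the uniqueness assertion on $X$ to uniqueness of the heat kernel on the orbifold base $M$, exploiting the descent relation built into \eqref{def:p_X} together with the Fourier decomposition of Proposition \ref{prop:ortho_G}. First I would observe that, by Proposition \ref{prop:ortho_G} applied to $G^+$, the operator $P_X^{D,+}(t)$ factors through $\Omega_0(G^+)$: it annihilates every Fourier component $\Omega_m(G^+)$ with $m\neq 0$ and its image lies in $\Omega_0(G^+)$. Consequently the kernel is pinned down by its action on $\sone$-invariant sections, which are precisely pullbacks from $M$ of sections of the induced bundle $G$. Any competing kernel $q_X^{D,+}$ lying in the class of Remark \ref{rmk:uniqueness_condition} must therefore have the form $\frac{1}{2\pi}\,q_M(t,\pi u,\pi v)$ for a unique kernel $q_M$ defined downstairs on $M$.

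The second step is to translate \eqref{eq:heat_eq_for_p_X^D} into the corresponding heat equation on $M$ for $q_M$. The evolution equation transfers via the compatibility between $\DLaplace_X^D$ and $\DLaplace_M^D$ noted just before Notation \ref{rmk:notation_p^D} (the Dirac analogue of \eqref{eq:Lplace^H_X}), while the initial condition follows from the fact that $\mathcal{P}_0^D\omega=\omega$ whenever $\omega\in\Omega_0(G)$, together with the unit-length normalization of the $\sone$-fiber which absorbs the factor $1/(2\pi)$. Thus $q_M$ satisfies the standard heat equation for $-D_M^2$ on $M$ with the usual delta-type initial condition. Invoking the uniqueness of the heat kernel associated with a self-adjoint Dirac Laplacian on a compact Riemannian orbifold (a standard consequence of an $L^2$ energy estimate combined with local parametrices on orbifold charts in the spirit of the construction referenced before Notation \ref{rmk:notation_p^D}) then forces $q_M = p_M^{D,+}$, and hence $q_X^{D,+} = p_X^{D,+}$.

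The self-adjointness of $P_X^{D,+}(t)$ is then an immediate byproduct: the candidate adjoint kernel again solves \eqref{eq:heat_eq_for_p_X^D} with the same initial condition, because $-D_M^2$ is formally self-adjoint on the orbifold $M$ and hence so is its horizontal lift on $\Omega_0(G^+)$; since it lies in the admissible class it must coincide with $p_X^{D,+}$ by what has just been shown. The point I expect to be the main obstacle is the rigor of the descent across the singular strata of $M$: one needs to verify that an $\sone$-invariant solution on $X$ genuinely defines a well-behaved object on the orbifold and that the equivalence of \eqref{eq:heat_eq_for_p_X^D} with its downstairs counterpart persists uniformly up to the lower-dimensional strata. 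Away from these strata the whole argument is a routine transcription of the classical fundamental-solution uniqueness proof.
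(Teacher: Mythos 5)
Your route is genuinely different from the paper's, and as written it leaves the decisive steps unproven. The paper never descends to $M$ for this theorem: it builds the operator $e^{\frac{1}{2}t\DLaplace_{X,0}^{D,+}}$ directly on the smooth manifold $X$ from the eigenfunction expansion \eqref{eq:heat_operator_expansion} (smoothness of eigenfunctions coming from ellipticity of $\DLaplace_{X}^{D,+}+T^{2}$, Remark \ref{rmk:smoothness}), notes it solves \eqref{eq:eq_for_e_box}, and runs a short Duhamel argument: $s\mapsto\langle P_{X}^{D,+}(t-s)f,\,e^{\frac{1}{2}s\DLaplace_{X,0}^{D,+}}g\rangle$ is constant in $s$, and evaluating at $s=0$ and $s=t$, using Proposition \ref{prop:ortho_G}, gives $P_{X}^{D,+}(t)=e^{\frac{1}{2}t\DLaplace_{X,0}^{D,+}}$, with self-adjointness inherited from the manifestly self-adjoint spectral operator. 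This is deliberately arranged so that no analysis on the singular quotient is needed at this point.

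Your proposal instead pushes everything down to the orbifold and cites ``uniqueness of the heat kernel of a self-adjoint Dirac Laplacian on a compact Riemannian orbifold'' as standard. That is the crux, not a routine ingredient: its proof is the same Duhamel/energy argument, now requiring integration by parts, regularity, and the delta-type initial condition (with respect to the orbifold measure and the $1/2\pi$ fiber normalization) to be controlled across the lower-dimensional strata --- precisely the point you yourself flag as the main obstacle and do not resolve, and precisely the orbifold analysis the paper's on-$X$ argument is designed to avoid. There is also a circularity in your self-adjointness ``byproduct'': to say the adjoint kernel again satisfies \eqref{eq:heat_eq_for_p_X^D} with the same initial condition you need the kernel to solve the heat equation in its second variable (equivalently, symmetry of the downstairs kernel), which is part of what is being established; it can be repaired by invoking the spectral (Friedrichs) construction of the semigroup on $M$, but then you are again assuming unproven orbifold spectral theory, whereas the paper obtains this for free from \eqref{eq:heat_operator_expansion} on the smooth $X$. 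Your descent step itself (bi-$\sone$-equivariance of any admissible kernel from Proposition \ref{prop:ortho_G}, hence a kernel of the form $\frac{1}{2\pi}q_{M}(t,\pi u,\pi v)$) is fine in spirit, but the verification that the descended object is a genuine orbifold heat kernel up to the singular strata is again deferred. In short: the strategy could be made to work, but as it stands the statements you assume are of essentially the same depth as the theorem, while the paper's proof is self-contained and shorter.
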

\begin{proof}
The idea follows that of the standard arguments \cite[Lemma 2.16]{berline2003heat}, although we are in the transversal case. But since the self-adjointness of $P_X^{D,+}(t)$ holds as an outcome, let us treat it in some detail. Compare with the operator $e^{\frac{1}{2}t\DLaplace_{X,0}^{D,+}}$ defined in \eqref{eq:heat_operator_expansion} , which satisfies \eqref{eq:eq_for_e_box}.
Then, for any $f,g\in\Omega(G^+)$
\begin{align*}
0&=-\int_0^t \frac{\partial}{\partial s}\langle P_{X}^{D,+}(t-s)f,e^{\frac{1}{2}s\DLaplace_{X,0}^{D,+}}g\rangle \mathrm ds=\langle P_{X}^{D,+}(t)f,\mathcal{P}_0^{D,+}g\rangle-\langle \mathcal{P}_0^{D,+}f,e^{\frac{1}{2}t\DLaplace_{X,0}^{D,+}}g\rangle \\
&=\langle P_{X}^{D,+}(t)f, g\rangle-\langle f,e^{\frac{1}{2}t\DLaplace_{X,0}^{D,+}}g\rangle \text{ (by Proposition \ref{prop:ortho_G} and \eqref{eq:heat_operator_expansion} below)}\\
&=\langle P_{X}^{D,+}(t)f,g\rangle-\langle e^{\frac{1}{2}t\DLaplace_{X,0}^{D,+}}f,g\rangle\text{ (since $e^{\frac{1}{2}t\DLaplace_{X,0}^{D,+}}$ is self-adjoint).}
\end{align*}
This implies $e^{\frac{1}{2}t\DLaplace_{X,0}^{D,+}}=P_{X}^{D,+}(t)$, giving the self-adjointness  of the latter since the former is self-adjoint.
\end{proof}

\begin{rmk}
\label{rmk:uniqueness_condition}
Other than $e^{\frac{1}{2}t\DLaplace_{X,0}^{D,+}}$ of \eqref{eq:heat_operator_expansion}, this argument is applicable to any self-adjoint operator $H_t$ which satisfies Proposition \ref{prop:heat_eq_D_X} and annihilates $\Omega_m(G),$ $\forall m\neq 0$.
\end{rmk}


\section{Probabilistic Approach for our Index Theorem on $X$}
\label{sec:prob}

Classically the computation of the local index density relies in part on the asymptotic expansion of the appropriate heat kernel. In our transversal setting, one difficulty arising in this context is that the asymptotic expansion of the heat kernel for $\DLaplace_X^D$ on $X$ appears vague, partly because the $\sone$-action on $X$ is only locally free. See, however, the work \cite{cheng2015heat} which discusses the asymptotic expansion for CR manifolds with $\sone$-action (\cite[Theorem 1.3 and Subsection 7.1]{cheng2015heat}).  This unavoidably contains nontrivial ``corrections" in contrast to the globally 
free case and their approach relies on the CR geometry consideration, cf. Introduction.

The difficulty mentioned above in the purely Riemannian setting here is overcome in a way different from \cite{cheng2015heat}. This is to adopt the probabilistic approach with the fact that the desired heat kernel can be represented in terms of \textit{conditional expectation} together with the transversal heat kernel for \textit{functions} (see \eqref{eq: kernel_represent}). The asymptotic expansion of the latter is more tractable. Further, as far as the ``supertrace" is 
concerned, with the probabilistic approach the corrections mentioned above are hidden and implicitly cancelled off in 
the end.   Those corrections needn't enter our picture here (for the purpose of supertrace).  

The discussion is separated into two parts: the probabilistic representation of the heat kernel for $-D_X^2$ on $X$, and the computation of the local index density.


\subsection{Probabilistic Representation of Heat Kernels}
\label{subsection:Prob}
To obtain a  Feynman-Kac formula that connects the transversal heat kernel $p_X^D(t,u,v)$ of Proposition \ref{prop:heat_eq_D_X} with the transversal heat kernel $p_X(t,x,y)$ (on functions) is not new.  Yet the 
transversal condition here leads to a variant of the classically known one.  Our main results are \eqref{eq: kernel_represent} and \eqref{eq:diag_kernel} below.

Let the notation and assumption be as in the beginning of Section \ref{sec:const}. Let $\pi:X\to M=X/\sone$ and $\widehat \pi:\ortho{M}\to M$. 
Consider the following for $\widetilde\theta\in\Omega_0(G)$:
\begin{equation}
\label{eq:heat_eq_D_X}
\begin{cases}
&\displaystyle\frac{\partial \widetilde{\theta}}{\partial t}(t,u)=\frac{1}{2}\DLaplace_X^D\widetilde{\theta}(t,u),\quad (t,u)\in (0,\infty)\times X\\
&\displaystyle\widetilde{\theta}(0,u)=\widetilde{\theta}_0(u),\quad \widetilde\theta_0\in\Omega_0(G)\text{ of Definition \ref{def:Fourier_component_G}}.
\end{cases}
\end{equation}
It follows that (cf. Proposition \ref{prop:ortho_G})
\begin{equation}
\label{eq: kernel_prop}
\widetilde\theta(t,u)=\int_Xp_X^D(t,u,v)\widetilde\theta_0(v)\mathrm dv.
\end{equation}

In this subsection, we aim to develop a probabilistic representation of $p_X^D(t,u,v)$.

Fix $x\in M$ and let $\{X_i\}$ be an orthonormal basis of $T_xM$ (the orbifold tangent bundle cf. \cite[Section 2]{wolak2015orbifolds} or as identified with the horizontal subbundle $\mathcal H$ on $X$ modulo action of $\sone$).  
Identify $T_xM$ with $\mathbb{R}^n$ via this frame $\{X_i\}$ and denote it by $\widehat x:\mathbb{R}^n\to T_xM $ (cf. Remark \ref{rmk:Hodge-de Rham_Laplace}).

Note that the preceding statements make perfect sense if $x$ lies in the principal stratum of $M$ (i.e. the set of points where the local group associated with orbifold charts is trivial). For $x$ in the lower-dimensional strata, those statements still make 
sense by going to the orbifold charts.  The modifications (here and below) are not difficult and omitted. See Appendices \ref{appendix:L_diffusion} and \ref{appendix:adaptation}.

The usual Lichnerowicz formula (cf. \cite[Theorem 7.5.3]{hsu2002stochastic}) that expresses $\DLaplace_M^D-\Laplace_M$ as $ -{S}/{4}-{1}/{2}\sum_{j,k=1}^n c(X_j)c(X_k)\otimes L(X_j,X_k)$, where $\DLaplace_M^D=-D_M^2$ for the Dirac operator $D_M$ of Subsection \ref{subsection:transversal_D_X} and $\Laplace_M$ in Remark \ref{rmk:Laplace_M_OM}, $S$ the scalar curvature and $L$ the curvature operator on the factor bundle $\xi$ of $G$ (cf. \eqref{eq:Dirac_composition}), can be lifted up to $\ortho{M}$ (cf. \cite[p. 197--198]{hsu2002stochastic}):
\begin{equation}
\label{eq:Lichnerowicz_lifted}
\DLaplace_\ortho{M}^D=\Laplace^H_{\ortho{M}} -\frac{\widehat S}{4}-\frac{1}{2}\sum_{j,k=1}^n c(\widehat X_j)c(\widehat X_k)\otimes \widehat L(\widehat X_j,\widehat X_k),
\end{equation}
where $\DLaplace_\ortho{M}^D$ lifts $\DLaplace_M^D$ and satisfies $\DLaplace_\ortho{M}^D\widehat\theta(\widehat x)=\widehat x^{-1}\DLaplace_M^D\theta(x)$ ($\widehat \pi\widehat x=x$, cf. Remark \ref{rmk:Laplace_M_OM}).

Now considering the following equation for $\widehat\theta$ (where $\widehat\theta_0(\widehat x)=\widehat x^{-1}\theta_0(x)$, $\pi^\ast\theta_0(x)=\widetilde\theta_0(u)$ in \eqref{eq:heat_eq_D_X} with $\pi u=\widehat\pi \widehat x$):
\begin{equation}
\label{eq:heat_eq_O(M)}
\displaystyle\frac{\partial \widehat{\theta}}{\partial t}(t,\widehat x)=\frac{1}{2}\DLaplace_\ortho{M}^D\widehat{\theta}(t,\widehat x),\quad (t,\widehat x)\in (0,\infty)\times \ortho{M};\quad\widehat{\theta}(0,\widehat x)=\widehat{\theta}_0(\widehat x),
\end{equation}
the probabilistic representation of the solution $\widehat \theta(t,\widehat x)$ to \eqref{eq:heat_eq_O(M)} can be derived via an appropriate Feynman-Kac formula (cf. Theorem \ref{thm:F-K_formula}) incorporating with the Lichnerowicz formula \eqref{eq:Lichnerowicz_lifted}:  Let $M_t:G_x\rightarrow G_x$ be determined by 
${\mathrm dM_t}/{\mathrm dt}=-{1}/{4}M_t\sum_{j,k=1}^n c(e_j)c(e_k)\otimes \widehat X_t L(\widehat X_te_j,\widehat X_te_k)\widehat X_t^{-1},\; M_0=I$, where $\widehat X_t$ denotes the horizontal lift of an $M$-valued Brownian motion $X_t$ 
and set $R_t=\expo\big(-{1}/{8}\int_0^t\widehat S(\widehat X_s)\mathrm ds\big)$ (cf. \cite[Proposition 7.6.4; p. 82]{hsu2002stochastic}).  The solution to the initial value problem \eqref{eq:heat_eq_O(M)} on $\ortho{M}$ is then given by $\widehat{\theta}(t,\widehat x)=\mathbb{E}_{\widehat x}\big[ R_t M_t\widehat{\theta}_0(\widehat X_t)\big]$ (cf. Theorem \ref{thm:F-K_formula} and Remark \ref{rmk: FK}).  

Correspondingly the solution to the initial value problem on $M$ (where $\theta_0(x)$ satisfies $\widehat\theta_0(\widehat x)=\widehat x^{-1}\theta_0(x)$, $\widehat \pi\widehat x=x$): ${\partial {\theta}}(t,x)/{\partial t}={1}/{2}\DLaplace_M^D{\theta}(t,x),\; (t,x)\in (0,\infty)\times {M};\; \theta(0,x)={\theta}_0(x)$ is given, via $\widehat\theta_0(\widehat x)=\widehat x^{-1}\theta_0(x)$, by ${\theta}(t, x)=\mathbb{E}_{ x}\big[ R_t M_t\widehat X_t^{-1}{\theta}_0( X_t)\big]$. See \cite[Theorem 7.2.1, Proposition 7.6.4]{hsu2002stochastic}.

Remark that $\widehat{X}_t^{-1}$ may be interpreted as the stochastic parallel transport $\tau_t$ \cite[p. 200 and p. 219]{hsu2002stochastic}, \cite[(2.4) in p. 97]{hsu1988brownian}. See also Remark \ref{rmk:111} in Subsection \ref{subsection:Estimation of Horizontal Brownian Motion} below. 

An adaptation of the above to orbifolds is partly discussed in Appendices \ref{appendix:L_diffusion} and \ref{appendix:adaptation}.
We will keep using those adaptations without further mention. 

In complete analogy\footnote{For a second order elliptic operator $L$ on $X$ (not necessarily nondegenerate), the $L$-diffusion process exists (\cite[Theorem 1.3.4]{hsu2002stochastic}). Thus, by setting $L=\DLaplace_X^D/2$ and by arguments similar to \cite[p. 199 and p. 24]{hsu2002stochastic} one has the formula \eqref{eq:F-K_X}. For our case that $\widetilde\theta_0$ is bundle-valued rather than number-valued, one may consult \cite[Subsection 2c, p. 73]{bismut1984atiyah} or \cite[Section 1]{hsu1988brownian}, \cite[Remark 7.2.5]{hsu2002stochastic}. Remark that the local freeness of the $\sone$-action on $X$ does not introduce extra difficulties into the aforementioned methods. Alternatively, since $p_X^D(t,u,v)$ and $p_X(t,u,v)$ have been constructed (with uniqueness) in Subsection \ref{subsection:Estimation of Horizontal Brownian Motion}, combining arguments similar to \cite[Proposition 4.1.3, Theorem 1.3.6 and p. 199]{hsu2002stochastic} and \cite[Section 2]{hsu1988brownian} yields the $\DLaplace_X^D/2$-diffusion process on $X$ satisfying \eqref{eq:F-K_X}.}, it follows from $\widetilde\theta_0(u)=\theta_0(\pi u)$ that the solution $\widetilde\theta$ to \eqref{eq:heat_eq_D_X} on $X$ is ${\widetilde\pi}^\ast\theta$ and can be represented as 
\begin{equation}
\label{eq:F-K_X}
\widetilde{\theta}(t, u)=\mathbb{E}_{ u}\big[ R_t M_t\widehat X_t^{-1}\widetilde{\theta}_0(U_t)\big];
\end{equation}
see Remark \ref{rmk:Brownian_bridge} for the needed semimartingale property of $U_t$.
Here, by abuse of notation we keep $\widehat X_t^{-1}$ rather than $\tau_t$ in the RHS of \eqref{eq:F-K_X}.

Combining \eqref{eq:F-K_X} and \eqref{eq: kernel_prop}, we have
\begin{equation}
\label{eq:compare_E_and_p}
\mathbb{E}_u\big[ R_tM_t \widehat X_t^{-1}\widetilde{\theta}_0(U_t)\big]=\int_X p_X^D (t,u,v)\widetilde{\theta}_0(v)\mathrm dv.
\end{equation}

The equality \eqref{eq:compare_E_and_p} holds only for those $\widetilde{\theta}_0$ (i.e. $\sone$-invariant ones). We need to generalize the result \eqref{eq:compare_E_and_p} for arbitrary $\widetilde \theta$, cf. \eqref{eq:diag_kernel} below.

Let $\widetilde \theta\in\Omega(G)$. From the orthogonality of $\Omega_m(G)$ in Proposition \ref{prop:ortho_G},
\begin{equation}
\label{eq:kernel computation_integral}
\int_Xp_X^D (t,u,v) \widetilde\theta(v)\mathrm dv
=\int_Xp_X^D (t,u,v) \widetilde\theta_0(v)\mathrm dv\overset{\eqref{eq:compare_E_and_p}}{=\joinrel=}\mathbb{E}_u\big[ R_t M_t\widehat X_t^{-1}\widetilde\theta_0(U_t)\big].
\end{equation}
We compute, with $\widetilde\theta_0(U_t)=\widetilde\theta_0(e^{-i\xi}\circ U_t)$,
\begin{align}
&\mathbb{E}_u\big[ R_t M_t\widehat X_t^{-1}\widetilde\theta_0(U_t)\big]=\mathbb{E}_u\Big[ R_t M_t\cdot \frac{1}{2\pi}\int_{\sone}\widehat X_t^{-1}\widetilde{\theta}_0(e^{-i\xi}\circ U_t)\mathrm d\xi\Big]\nonumber\\
=&\frac{1}{2\pi}\int_{\sone}\int_X  \mathbb{E}_{u,v;t}\big[ R_t M_t\widehat X_t^{-1}\widetilde{\theta}_0(e^{-i\xi}\circ v) \big]\cdot p_X(t,u,v)\mathrm dv\mathrm d\xi \label{eq:kernel computation_expectation2}\quad\text{(see remarks below)}\\
\overset{v=e^{i\xi}\circ \widetilde v}{=\joinrel=}&\frac{1}{2\pi}\int_X\int_{\sone}  \mathbb{E}_{u,e^{i\xi}\circ\widetilde v;t}\big[ R_t M_t\widehat X_t^{-1}\widetilde{\theta}_0(\widetilde v) \big]\cdot p_X(t,u,e^{i\xi}\circ\widetilde v)\mathrm d\xi \mathrm d\widetilde v.\label{eq:kernel computation_expectation}
\end{align}
For \eqref{eq:kernel computation_expectation2} above, see Subsection \ref{subsection:Some Results in Probability}. In the present context with the horizontal Laplacian $\Laplace_X^H$ of \eqref{eq:Lplace^H_X} and $\DLaplace_X^D$ of \eqref{eq:heat_eq_for_p_X^D}, we refer to \cite[Proposition 4.1.3 in p. 104, (7.2.5) in p. 200 and the last paragraph in p. 219]{hsu2002stochastic}; the horizontal/transversal property of our operator here does not cause serious difficulties.  It is somewhat novel in that the above prefers to work out on $X$ rather than on (the orbifold) $M$.

Hence, comparing \eqref{eq:kernel computation_integral} and \eqref{eq:kernel computation_expectation} we achieve our probabilistic representation:
\begin{equation}
\label{eq: kernel_represent}
p_X^D (t,u,v)=\frac{1}{2\pi} \int_{\sone}\mathbb{E}_{u,e^{i\xi}\circ v;t}\big[ R_t M_t\widehat X_t^{-1}\mathcal P_0^D\big]p_X(t,u,e^{i\xi}\circ v)\mathrm d\xi
\end{equation}
as mentioned earlier, where the projection $\mathcal P_0^D$ is as in Proposition \ref{prop:heat_eq_D_X}.  In particular\footnote{To be slightly more precise about \eqref{eq:diag_kernel}, we insert into both sides of \eqref{eq: kernel_represent} a $\delta$-function like section so that $p_X^D(t,u,u)=\frac{1}{2\pi}\int_{v\in X}\int_\sone\mathbb{E}_{u,e^{i\xi}\circ v;t}\left[R_tM_t\widehat X_t^{-1}\big(\mathcal P_0^D\delta(u-v)\big)\right]p_X(t,u,e^{i\xi}\circ v)\mathrm d\xi\mathrm dv$ with $\mathcal P_0^D$ explicated in Notation \ref{nota:projection_D_pm} and Definition \ref{defn:projection}.}
\begin{equation}
\label{eq:diag_kernel}
p_X^D (t,u,u)=\frac{1}{2\pi} \int_{\sone}\mathbb{E}_{u,e^{i\xi}\circ u;t}\big[ R_t M_t\widehat X_t^{-1}\mathcal P_0^D\big]p_X(t,u,e^{i\xi}\circ u)\mathrm d\xi.
\end{equation}
See \eqref{eq:before_exchange} for use. In contrast to the classical case, a projection $\mathcal P_0^D$ is introduced into the conditional expectation.

\subsection{A Transversal, Equivariant Atiyah-Singer Index Theorem}

We shall now see that with the \textit{interchangeability} of limits, our index theorem for the manifold $X$ with locally free $\sone$-action can be immediately derived. The interchageability will be discussed in Section \ref{sec: est_HBM}. 

Let us first aim at a McKean-Singer formula for our index problem. See Theorem \ref{thm:index_thm_str}.

\begin{defn}
\label{def:index}
Recall $\Omega_m(G)$ with $m=0$ from Definition \ref{def:Fourier_component_G} and $\mathcal P_0^{D,\pm}$ from Notation \ref{nota:projection_D_pm}. Set $D_{X,0}^\pm\coloneqq D_X^\pm\big|_{\Omega_0(G^\pm)}:\Omega_0(G^\pm)\to\Omega_0(G^\mp)$. Our transversal, $\sone$-equivariant index is defined to be the index of $D_{X,0}$. \comment{(cf. Subsection \ref{subsection:transversal_D_X}) is defined as}Namely $\ind_{\sone}(D_{X,0})\coloneqq \dimension \ker D_{X,0}^+-\dimension \ker D_{X,0}^- $, provided that both $\ker D_{X,0}^+$ and $\ker D_{X,0}^-$ are finite-dimensional.
\end{defn}

Let $0=\lambda_0\le \lambda_1\le \lambda_2\le \cdots$ be the eigenvalues of $-\DLaplace_{X,0}^{D,+}\coloneqq (D_{X,0}^+)^2$ counted with multiplicity (cf. Remark \ref{rmk:smoothness}). Denote $\{f_1^\lambda,\cdots,f_{d_\lambda}^\lambda\}$ as an orthonormal basis for the eigenspace of $-\DLaplace_{X,0}^{D,+}$ with eigenvalue $\lambda$. Form (cf. Remark \ref{rmk:smoothness} below)
\begin{equation}
\label{eq:heat_operator_expansion}
e^{\frac{1}{2} t\DLaplace_{X,0}^{D,+}}(u,v)=\sum_{\lambda\in\mathrm{Spec}\,(-\DLaplace_{X,0}^{D,+})}\sum_{i=1}^{d_\lambda} e^{-\lambda t/2}f_i^\lambda(u)\otimes \big(f_i^\lambda (v)\big)^\ast, 
\end{equation}
which 
naturally extends as $e^{\frac{1}{2} t\DLaplace_{X,0}^{D,+}}(u,v):\Omega(G^+)\to \Omega_0(G^+)\subset\Omega(G^+)$.  It satisfies the (operator) equation
\begin{equation}
\label{eq:eq_for_e_box}
\frac{\partial e^{\frac{1}{2} t\DLaplace_{X,0}^{D,+}}}{\partial t}=\frac{1}{2}\DLaplace_{X,0}^{D,+} e^{\frac{1}{2} t\DLaplace_{X,0}^{D,+}},\quad e^{\frac{1}{2} t\DLaplace_{X,0}^{D,+}}|_{t=0}=\mathcal P_0^{D,+}.
\end{equation}
Similar consideration applies to $-\DLaplace_{X,0}^{D,-}$.

\begin{rmk} (Smoothness)
\label{rmk:smoothness}
Let $T$ be the real vector field induced by the $\sone$-action, i.e. $T\omega=\frac{\partial}{\partial\zeta}(e^{-i\zeta})^\ast\omega( u)\big|_{\zeta=0}$.  Since the eigenfunctions of $-\DLaplace_{X,0}^{D,\pm}$ are still eigenfunctions of the elliptic operator $-(\DLaplace_{X}^{D,\pm}+T^2)$ restricted to $\Omega_0(G^\pm)$ as $T\big|_{\Omega_0(G^\pm)}=0$,
this leads to the smoothness of \eqref{eq:heat_operator_expansion} (cf. \cite[Lemmas 1.6.3 and 1.6.5]{gilkey1995invariant} and \cite[p. 45]{cheng2015heat}).
\end{rmk}

\begin{defn}
\label{nota:omega0}
(cf. \cite[p. 33]{cheng2015heat}) Let $\omega_0$ be the global real $1$-form on $X$ determined by $\langle \omega_0,\mathfrak h\rangle=0$ for all $\mathfrak h\in \mathcal H$ and $\langle\omega_0,T\rangle=1$ for $T$ in the remark above.
\end{defn}
Recall $\str e^{\frac{1}{2}t\DLaplace_{X,0}^D}=\trace e^{\frac{1}{2}t\DLaplace_{X,0}^{D,+}}-\trace e^{\frac{1}{2}t\DLaplace_{X,0}^{D,-}}$. Adapting Hsu's argument \cite[Theorem 7.3.1]{hsu2002stochastic} via \eqref{eq:heat_operator_expansion}, we have a similar formula (cf. \cite[Corollary 4.8]{cheng2015heat} in the CR context):
\begin{thm}(McKean-Singer formula)
\label{thm:index_thm_str}
For any $t>0$, 
\begin{equation}
\label{eq:McKean-Singer formula}
\ind_{\sone}(D_{X,0})=\int_X \str e^{\frac{1}{2}t\DLaplace_{X,0}^D}(u,u)\mathrm du.
\end{equation}
\end{thm}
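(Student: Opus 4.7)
The plan is to adapt the classical McKean–Singer supertrace argument (as in Hsu's Theorem 7.3.1) to the $\sone$-invariant subspaces $\Omega_0(G^\pm)$. The key observation is that although $-\DLaplace_{X,0}^{D,\pm}$ is only transversally elliptic on $\Omega(G^\pm)$, its restriction to $\Omega_0(G^\pm)$ behaves as an elliptic operator: by Remark \ref{rmk:smoothness}, eigenfunctions of $-\DLaplace_{X,0}^{D,\pm}$ coincide with those of the elliptic operator $-(\DLaplace_X^{D,\pm}+T^2)|_{\Omega_0(G^\pm)}$, which yields discrete spectrum with finite multiplicities, smooth eigenfunctions, and absolute convergence of the spectral sum for each $t>0$.

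First I would use the spectral expansion \eqref{eq:heat_operator_expansion} and its analog on $G^-$ to compute
\[
\int_X \str e^{\frac{1}{2}t\DLaplace_{X,0}^D}(u,u)\,\mathrm du = \sum_{\lambda \ge 0} e^{-\lambda t/2}\, m^+(\lambda) - \sum_{\mu \ge 0} e^{-\mu t/2}\, m^-(\mu),
\]
where $m^\pm(\lambda)$ denotes the multiplicity of $\lambda$ in $\mathrm{Spec}(-\DLaplace_{X,0}^{D,\pm})$. Orthonormality of the eigenbases together with Fubini justifies exchanging the integral with the absolutely convergent spectral sum.

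The main step is pairwise cancellation of all $\lambda>0$ contributions. Since $D_X$ is formally self-adjoint and $\sone$-equivariant, it preserves each Fourier component $\Omega_m(G)$; restricted to $\Omega_0$, the operators $D_{X,0}^+$ and $D_{X,0}^-$ are formal adjoints, so $-\DLaplace_{X,0}^{D,+} = D_{X,0}^- D_{X,0}^+$ and $-\DLaplace_{X,0}^{D,-} = D_{X,0}^+ D_{X,0}^-$. For an eigenfunction $f$ of $-\DLaplace_{X,0}^{D,+}$ with eigenvalue $\lambda>0$, the section $D_{X,0}^+ f$ is a $\lambda$-eigenfunction of $-\DLaplace_{X,0}^{D,-}$ and satisfies $\|D_{X,0}^+ f\|^2 = \lambda \|f\|^2 > 0$; symmetrically $D_{X,0}^-$ inverts this up to the scalar $\lambda$. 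Hence $m^+(\lambda) = m^-(\lambda)$ for all $\lambda > 0$, and those terms cancel.

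Only the $\lambda = 0$ terms remain, and the standard identities $\ker(D_{X,0}^- D_{X,0}^+) = \ker D_{X,0}^+$ and $\ker(D_{X,0}^+ D_{X,0}^-) = \ker D_{X,0}^-$ (immediate from adjointness) reduce the right-hand side to $\dimension\ker D_{X,0}^+ - \dimension\ker D_{X,0}^- = \ind_{\sone}(D_{X,0})$ by Definition \ref{def:index}. The main technical obstacle is ensuring the spectral framework applies on $\Omega_0(G^\pm)$ despite $-D_X^2$ being only transversally elliptic; this is handled by reducing to the genuinely elliptic operator $-(\DLaplace_X^{D,\pm}+T^2)$ on $\Omega_0(G^\pm)$ as in Remark \ref{rmk:smoothness}, which supplies all the compactness and regularity needed for the classical cancellation to go through.
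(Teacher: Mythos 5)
Your argument is correct and follows essentially the same route as the paper, which simply adapts Hsu's classical McKean--Singer supertrace argument (his Theorem 7.3.1) via the spectral expansion \eqref{eq:heat_operator_expansion}, using Remark \ref{rmk:smoothness} (the passage to the elliptic operator $-(\DLaplace_X^{D,\pm}+T^2)$ on $\Omega_0(G^\pm)$) to justify discreteness, finite multiplicities and smoothness. Your written-out cancellation of the $\lambda>0$ eigenspaces via the intertwining by $D_{X,0}^\pm$ is exactly the adaptation the paper has in mind.
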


This supertrace is studied in \cite{cheng2015heat} in the context of CR manifolds as remarked in the Introduction; 
see also \cite{barilari2012small} for heat kernels on sub-Riemannian geometry but no supertrace is discussed there.  

We shall now see that our main result Theorem \ref{thm:main_result} can be derived provided that the interchangeability of limits holds (cf. \eqref{assump:interchange}).

From \eqref{eq:diag_kernel} and Theorem \ref{thm:index_thm_str}, we have
\begin{align}
\label{eq:before_exchange}
I(t,u)&=\frac{1}{2\pi}\str \int_\sone\mathbb{E}_{u,e^{i\xi}\circ u;t}\big[R_tM_t\widehat X^{-1}_t\mathcal P_0^D\big]p_X(t,u,e^{i\xi}\circ u)\mathrm d\xi\nonumber \\ 
&=\frac{1}{2\pi}\int_\sone\mathbb{E}_{u,e^{i\xi}\circ u;t}\big[ R_t \,\str M_t\widehat X^{-1}_t\mathcal P_0^D\big]p_X(t,u,e^{i\xi}\circ u)\mathrm d\xi
\end{align}

\comment{Since $p_X(t,u,u)\sim (2\pi t)^{-n/2}$ and $R_t\to 1$ as $t\downarrow 0^+$. }Assume that the limit in $t$ is interchangeable with the integral, i.e., the RHS of \eqref{eq:before_exchange}$\big|_{t\downarrow 0^+}$ equals
\begin{equation}
\label{assump:interchange}
\frac{1}{2\pi}\int_\sone \mathbb{E}_{u,e^{i\xi}\circ u;t} \Big[ \lim_{t\downarrow 0^+}p_X(t,u,e^{i\xi}\circ u) R_t\str M_t \widehat X^{-1}_t \mathcal P_0^D\Big]\mathrm d\xi.
\end{equation}
Write $I(u)\coloneqq \lim_{t\downarrow 0^+}I(t,u)$ for \eqref{assump:interchange}.

By noting $p_X(t,u,e^{i\xi}\circ u)=\frac{1}{2\pi}p_M(t,x,x)$ for $u\in X$, one sees ($X_0$ denoting the principal stratum of $X$)
\begin{equation}
\label{eq:A-S_on_M_0}
\int_X I(u)\mathrm du=\int_{\pi X_0}\mathbb{E}_{x,x;t}\Big[ \lim_{t\downarrow 0^+}p_M(t,x,x)R_t\str \big(M_t \widehat X^{-1}_t\big)\Big]\mathrm dx
\end{equation}
(cf. the preceding footnote for this computation involving $\mathcal P_0^D$).

Hence, by \cite[Section 7.6]{hsu2002stochastic} or Remark \ref{rmk:computation of index} one readily obtains a proof of our index theorem by first finishing the calculation of \eqref{eq:A-S_on_M_0}  as $\widehat A(\mathcal H)\wedge \ch{\xi}$, mainly due to the fact that $x\in\pi X_0\subset M$ here is an ordinary point (i.e. the projection $\pi:X\to M$ gives a fiber bundle on a neighborhood of $X$). Here $\widehat A(\mathcal H)$ and $\ch\xi$ are respectively the equivariant $\widehat A$-genus of the subbundle $\mathcal H$ of $TX$ and the equivariant Chern character of the $\sone$-equivariant complex vector bundle $\xi$ (cf. \eqref{eq:Dirac_composition_new}). Hence they descend to $M$ at those $x\in\pi X_0$ so that we are capable of performing the needed calculation essentially in the classical manner at such $x\in M$. Recall that $\omega_0$ is given in Definition \ref{nota:omega0}. Then an easy lifting procedure by using $\omega_0$ in $\sone$-fiber integration gives 
(first back to $X_0$ then $X$) 
\begin{equation}
\label{eq:ind_component}
\ind_\sone (D_{X,0})=\int_X\left[\frac{1}{2\pi}\widehat A(\mathcal H)\wedge\ch\xi\wedge\omega_0\right]_{n+1(=2\ell+1)}
\end{equation}
where $[\cdots]_k$ denotes the $k$-form component. See Remark \ref{rmk:computation of index} for more.

Now, we have almost reached the following index theorem as promised in the Introduction.

\begin{thm}
\label{thm:main_result}
(A transversal, equivariant Atiyah-Singer index theorem) In the notation of Subsection \ref{subsection:transversal_D_X}, let $X$ be a compact, oriented manifold of dimension $2\ell +1$ with locally free $\,\sone$-action. Recall the horizontal subbundle $\,\mathcal H$ of $\,TX$. Assume that $X$ is transversally spin (equivalently, the second Stiefel-Whitney class $w_2(\mathcal H)$ of $\,\mathcal H$ is zero) and that for a choice of spin structure the $\,\sone$-action is of transversally even type. Then our transversal, equivariant index theorem is given by
\begin{equation}
\label{main}
\ind_\sone(D_{X,0})=\frac{p}{2\pi}\int_X \widehat A(\mathcal H)\wedge\ch\xi\wedge\omega_0
\end{equation}
where $\xi$ is an $\sone$-equivariant complex vector bundle and $\omega_0$ is as in Definition \ref{nota:omega0}. Here $p=\min_{u\in X}|H_u|$ ($H_u$ denotes the finite isotropy subgroup of $\;\sone$ at $u$).
\end{thm}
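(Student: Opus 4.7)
My plan has two stages: first prove the theorem in the case $p = 1$, where the principal stratum has trivial isotropy; then reduce the general case to this via the effective action of $\tilde\sone := \sone/\mathbb{Z}_p \cong \sone$.

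For the case $p = 1$, I would begin from the McKean-Singer formula \eqref{eq:McKean-Singer formula}, replace $p_X^D(t,u,u) = e^{\frac{1}{2}t\DLaplace_{X,0}^D}(u,u)$ by the probabilistic representation \eqref{eq:diag_kernel}, and arrive at the quantity $I(t,u)$ of \eqref{eq:before_exchange}. Assuming the interchangeability of $\lim_{t\downarrow 0^+}$ with the integrals in \eqref{assump:interchange}, the pointwise local density $I(u) := \lim_{t\downarrow 0^+}I(t,u)$ is well-defined. Since $X\setminus X_0$ is a union of lower-dimensional strata (of measure zero), it suffices to integrate over $X_0$; using $p_X(t,u,e^{i\xi}u) = \tfrac{1}{2\pi}p_M(t,x,x)$ converts this to \eqref{eq:A-S_on_M_0}, an integral over $\pi X_0\subset M$. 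On $\pi X_0$ the orbifold is locally a smooth manifold and $\pi\colon X\to M$ is a genuine fiber bundle, so the integrand is exactly the classical Atiyah-Singer local density, computable by the standard Bismut-Hsu probabilistic technique (cf.\ \cite[Section~7.6]{hsu2002stochastic}) to yield $\widehat A(TM)\wedge\ch\xi$. An easy $\sone$-fiber integration using $\omega_0$, whose principal-orbit integral equals $2\pi$ when $p = 1$, then lifts this back to $X$ and gives \eqref{eq:ind_component}, which is \eqref{main} for $p = 1$.

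For general $p$, I would observe that $\mathbb{Z}_p\subset\sone$ acts trivially on $X$: it fixes every point of the open dense principal stratum $X_0$ by definition, so by closedness of the fixed set it fixes all of $X$. Hence the $\sone$-action factors through an effective locally free action of $\tilde\sone := \sone/\mathbb{Z}_p \cong \sone$ with trivial principal isotropy; the $\sone$- and $\tilde\sone$-invariant subspaces $\Omega_0(G)$ coincide, so $D_{X,0}$ is unchanged and $\ind_\sone(D_{X,0}) = \ind_{\tilde\sone}(D_{X,0})$. Rescaling the $\sone$-invariant metric along orbits by a factor of $p^2$ makes the effective generator $\tilde T = T/p$ unit norm, while leaving the horizontal subbundle $\mathcal H$, the induced Riemannian metric on $M$, and hence $\widehat A(\mathcal H)$ and $\ch\xi$ unaffected. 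Applying the $p = 1$ result to $\tilde\sone$ with this rescaled metric gives $\ind_{\tilde\sone}(D_{X,0}) = \tfrac{1}{2\pi}\int_X \widehat A(\mathcal H)\wedge\ch\xi\wedge\tilde\omega_0$, where the normalized form $\tilde\omega_0$ satisfies $\tilde\omega_0(\tilde T) = 1$, hence $\tilde\omega_0 = p\,\omega_0$; substituting yields the claimed factor $p/(2\pi)$.

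The main obstacle is the interchangeability \eqref{assump:interchange}: its verification requires uniform bounds, as $t\downarrow 0^+$, on the conditional expectation involving the scalar-curvature factor $R_t$, the multiplicative functional $M_t$ and the stochastic parallel transport $\widehat X_t^{-1}$, uniformly in $u\in X$ and particularly for $u$ whose orbits are near the lower-dimensional strata of $X$; this is the content of Section~\ref{sec: est_HBM}. A secondary but essentially routine technical point is the execution of the Bismut-Hsu local index computation in the orbifold context, which proceeds uneventfully on $\pi X_0$ since the principal stratum is locally smooth and the fiber bundle structure of $\pi$ is intact there.
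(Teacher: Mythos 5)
Your proposal follows essentially the same route as the paper: the $p=1$ case is exactly the paper's chain McKean--Singer formula $\to$ probabilistic representation \eqref{eq:diag_kernel} $\to$ interchangeability \eqref{assump:interchange} (verified in Section \ref{sec: est_HBM}) $\to$ reduction to the principal stratum \eqref{eq:A-S_on_M_0} and the classical computation yielding \eqref{eq:ind_component}, while your passage to the effective action of $\sone/\mathbb{Z}_p$ with $\tilde\omega_0=p\,\omega_0$ is precisely the paper's reparametrization $\widetilde{\sigma}(e^{i\theta},x)=\sigma(e^{i\theta/p},x)$ together with the insertion of $p\omega_0$ to restore the metric normalization. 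Your added remarks (why $\mathbb{Z}_p$ acts trivially, coincidence of the invariant subspaces, the $p^2$ rescaling along orbits) only make explicit what the paper leaves implicit, so the two arguments agree.
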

\begin{proof} Equip $\xi$ with an $\sone$-invariant connection.  
Now suppose that $\big\{ u\in X:H_u=\{\mathrm{id}\}\big\}=\emptyset$ so that $p>1$. Denote $\sigma:\sone\times X\to X$ the original $\sone$-action. Reset the $\sone$-action $\widetilde{\sigma}$ to be $\widetilde{\sigma}(e^{i\theta},x)\coloneqq \sigma(e^{i\theta/p},x)$, so $\big\{ u\in X: H_u=\{\mathrm{id}\}\big\}\neq\emptyset$ with respect to $\widetilde{\sigma}$. We are ready to finish the proof.  To use \eqref{eq:ind_component} via $\widetilde\sigma$, one should insert $p\omega_0$
to fit the normalization condition of our metric on $X$ (cf. Section \ref{sec:const}). This implies the equality \eqref{main}.
\end{proof}

\section{Interchangeability of Limits and Related Computations}
\label{sec: est_HBM}
We shall now show that the interchangeability assumption \eqref{assump:interchange} holds.


\subsection{Heat Kernel on Orbifolds}
To develop the bounds of $p_M(t,x,y)$ on orbifold (cf. \eqref{eq:ker_est_M_lower} and \eqref{eq:ker_est_M_upper}), which is well known when $M$ is a manifold, we make use of the asymptotic behavior of $p_M(t,x,y)$. These bounds will be to give the estimate for $\widehat X_t$ (cf. Proposition \ref{lemma:ut}), which is in turn used to show our interchangeability assumption \eqref{assump:interchange}.

For the construction of $p_M(t,x,y)$ on orbifolds, we briefly sketch some ingredients in order to fix the notation  
(\cite {dryden2008asymptotic} or \cite[Subsection 14.3]{duistermaat2013heat}).  According to \cite[Notation 3.8]{dryden2008asymptotic}, for a covering consisting of finitely many orbifold charts $(\widetilde W_\alpha, G_\alpha, \pi_\alpha)$, the parametrices $H^{(m)}_{\alpha}$ on $W_\alpha=\pi_\alpha(\widetilde W_\alpha)\subset M$ and the approximate solution $H^{(m)}$ are given in the following form (cf. \cite[(3.9)]{dryden2008asymptotic}):
\begin{equation}
\begin{split}
\label{eq:asymp_H}
&\forall m>n/2,\; (t,x,y)\in (0,\infty)\times W_\alpha\times W_\alpha\text{ and the natural distance }d_{\widetilde W_\alpha}(\cdot,\cdot)\text{ on }\widetilde W_\alpha\\
&\hspace{1cm}H^{(m)}_{\alpha}(t,x,y)=\sum_{\gamma\in G_\alpha}(2\pi t)^{-n/2}e^{-d_{\widetilde W_\alpha}(\widetilde x,\gamma \widetilde y)^2/2t}\Big(u_0(\widetilde x,\gamma \widetilde y)+\cdots+t^mu_m(\widetilde x,\gamma \widetilde y)\Big),\\
&\hspace{1cm}H^{(m)}(t,x,y)\coloneqq \sum_\alpha \psi_\alpha(x)\rho_\alpha(y) H_\alpha^{(m)}(t,x,y),
\end{split}
\end{equation}
for $C^\infty$ cut-off functions $\{\psi_\alpha\}_\alpha$ (being identically one in a small neighborhood $V_\alpha$ and supported in $W_\alpha$) and a partition of unity $\{\rho_\alpha\}_\alpha$ subordinate to $\{ W_\alpha\}_\alpha$.
\comment{
Let $q_{M,\alpha}(t,x,y)$ be the local heat kernel associated with $W_\alpha$ (and $H_\alpha^{(m)}(t,x,y)$) with
\begin{equation}
\label{eq:local_heat_kernel}
q_{M,\alpha}^{(m)}(t,x,y)=\sum_{\gamma\in G_\alpha}(2\pi t)^{-n/2}e^{-d_{\widetilde W_\alpha}(\widetilde x,\gamma\widetilde y)^2/2t}\big( u^M_0(\widetilde x,\gamma\widetilde y)+\cdots+t^mu^M_m(\widetilde x,\gamma\widetilde y)\big).
\end{equation}
The main difference between the paramatrix $H_\alpha^{(m)}(t,x,y)$ and $q_{M,\alpha}^{(m)}(t,x,y)$ is that the positivity of Riemannian invariants in \eqref{eq:local_heat_kernel} is guaranteed. For simplicity we abuse the notations $u_i^M$ by $u_i$ for $i=0,1,2,\cdots$. Define $q_M^{(m)}(t,x,y)\coloneqq \sum_\alpha \psi_\alpha(x)\rho_\alpha(y) q_{M,\alpha}^{(m)}(t,x,y)$.}

Following the successive approximation, let $R_m(t,x,y)=( {\partial}/{\partial t}-\Laplace_M/2 )H^{(m)}(t,x,y)$ and define the operator $A\sharp B$ with kernel $(A\sharp B)(t,x,y)\coloneqq \int_0^t \int_M A(t-s,x,z) B(s,z,y) \mathrm dz\mathrm ds$, which satisfies $((A\sharp B)\sharp C)=(A\sharp (B\sharp C))$, denoted by $A\sharp B\sharp C$. 

Then the fundamental solution $p_M(t,x,y)$ of the heat equation on $M$ is given by
\begin{equation}
\label{eq:asymp_expand}
\begin{split}
p_M(t,x,y)&=H^{(m)}(t,x,y)-(H^{(m)}\sharp R_m)(t,x,y)+\cdots+(-1)^{j}(H^{(m)} \sharp R_m^{\sharp j})(t,x,y)+\cdots\\
&=H^{(m)}(t,x,y)+H^{(m)}(t,x,y)\sharp Q_m
\end{split}
\end{equation}
where $R_m^{\sharp k}:={ R_m\sharp\cdots \sharp R_m}$ ($k$ copies, $k\ge 2$) with $R_m^{\sharp 1}=R_m$ and the alternating sum $Q_m$ (of $R_m^{\sharp j}$) satisfies for $m>{n}/{2}+2$, $|Q_m(t,x,y)|\le Ct^{m-{n}/{2}}\text{ on }[0,T]\times M\times M$ (for some $C>0$) (cf. \cite[Lemma 3.18, p. 217]{dryden2008asymptotic}).

As $t\downarrow 0^+$ and $m\to\infty$, $H^{(m)}(t,x,y)$ can approximate $p_M(t,x,y)$ to an arbitrarily high degree (in $t$). It follows that the asymptotic expansion of the heat kernel is  locally of the form:
\begin{equation}
\label{eq:asymp_pm}
p_M(t,x,y)\sim \sum_{\gamma\in G_\alpha}(2\pi t)^{-n/2}e^{-d_{\widetilde W_\alpha}(\widetilde x,\gamma \widetilde y)^2/2t}\big(u_0(\widetilde x,\gamma \widetilde y)+tu_1(\widetilde x,\gamma \widetilde y)+\cdots\big).
\end{equation}
See \eqref{eq:est_convol} for an improvement of these estimates essentially by $e^{-d(x,y)^2/2t}$.

\begin{rmk}
\label{rmk:local_heat_kernel}
The above idea can be adapted to the following, which our subsequent discussion is based. In fact, the same heat kernel $p_M(t,x,y)$ can be derived using local heat kernels $q_{\alpha}(t,x,y)$ (cf. \cite[p. 177]{duistermaat2013heat}). 
First extend $\widetilde W_\alpha$ (with its metric) to a compact manifold (e.g. $\mathbb{S}^n$ with a compatible Riemannian metric). Let $\widetilde q_{\alpha}(t,\widetilde x,\widetilde y)$ be the heat kernel on this compact manifold and consider $\widetilde q_{\alpha}(t,\widetilde x,\widetilde y)|_{\widetilde W_\alpha}$, still denoted by $\widetilde q_{\alpha}(t,\widetilde x,\widetilde y)$. As similar to the above discussion, the $G_\alpha$-invariant function $\sum_{\gamma\in G_\alpha} \widetilde q_{\alpha}(t,\widetilde x,\gamma\widetilde y)$ descends to a well-defined function $q_{\alpha}(t,x,y)$ on $W_\alpha$, which is going to be used for the successive approximation as above. By the maximum principle $\widetilde q_\alpha$ hence $q_\alpha$ is strictly positive on $(0,\infty)\times W_\alpha\times W_\alpha$ (cf. \cite[Theorem 1, p. 181]{chavel1984eigenvalues}). This procedure yields (cf. \cite[p. 154 (45)]{chavel1984eigenvalues}), for any fixed $m>{n/2+2}$
\begin{equation}
\label{eq:local_heat_kernel}
(\sum_\gamma\widetilde q_\alpha=)q_{\alpha}(t,x,y)=\sum_{\gamma\in G_\alpha}\frac{e^{-d_{\widetilde W_\alpha}(\widetilde x,\gamma\widetilde y)^2/2t}}{(2\pi t)^{n/2}}\Big( \eta(d_{\widetilde W_\alpha}(\widetilde x,\gamma\widetilde y))\sum_{j=0}^m t^ju_j(\widetilde x,\gamma\widetilde y)+O(t^{m+1})\Big),
\end{equation}
where $\eta(\cdot)$ is a function such that $\eta|[0,\epsilon/4]=1$ and $\eta|[\epsilon/2,\infty]=0$ for some $\epsilon>0$ (cf. \cite[p. 151]{chavel1984eigenvalues}). Likewise setting $q(t,x,y)=\sum_{\alpha}\psi_\alpha(x)\rho_\alpha(y)q_\alpha(t,x,y)$ as in \eqref{eq:asymp_H}, we have the corresponding $\mathscr R_m$ and $\mathscr Q_m$ as in \eqref{eq:asymp_expand}.
\end{rmk}

We are now ready to establish our bounds for $p_M(t,x,y)$. The following two propositions are the main result of this subsection. For basics about the distance function $d(\cdot,\cdot)$ on orbifolds (e.g. its existence), see \cite[Chapter III, p. 586]{bridson1999metric} or \cite{lange2020orbifolds} and references therein.

\begin{prop}
\label{prop:ker_est_M_lower}
There exists $\epsilon>0$ such that
\begin{equation}
\label{eq:ker_est_M_lower}
p_M(t,x,y) \ge \frac{C_1}{t^{n/2}}e^{- d(x, y)^2/2t}
\end{equation}
for any $t\in (0,\epsilon)$ and $y\in B(x;r)$ with $r<\epsilon$ where $C_1$ denotes some universal constant independent of the choice of $x$ and $y$, and $d(\cdot, \cdot)$ denotes the distance function on $M$.
\end{prop}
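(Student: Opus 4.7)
The plan is to reduce, via the finite orbifold atlas constructed for \eqref{eq:asymp_H}, to a local lower bound extracted from the leading term of the asymptotic expansion. By compactness of $M$ and the constructions in Remark \ref{rmk:local_heat_kernel}, I would first fix a uniform $\epsilon_0>0$ such that for every $x\in M$ there is a chart $(\widetilde W_{\alpha},G_\alpha,\pi_\alpha)$ for which $\pi_\alpha^{-1}(B(x;\epsilon_0))$ sits inside a normal coordinate ball around a lift $\widetilde x\in\widetilde W_\alpha$, and such that the orbifold distance $d(x,y)$ is realized by some lift: $d(x,y)=d_{\widetilde W_\alpha}(\widetilde x,\widetilde y)$ for a suitable $\widetilde y\in \pi_\alpha^{-1}(y)$. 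A standard refinement of the cover yields a uniform positive constant $c_0>0$ with $\psi_\alpha(x)\rho_\alpha(y)\ge c_0$ whenever $y\in B(x;\epsilon_0)$ and $\alpha$ is the chart attached to $x$.

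Next, using the representation $p_M=q+q\sharp\mathscr Q_m$ from \eqref{eq:asymp_expand} together with Remark \ref{rmk:local_heat_kernel}, I would lower-bound the main term $q(t,x,y)$. Each $q_\beta(t,x,y)$ is non-negative, since it descends (after averaging over the finite group $G_\beta$) from a strictly positive heat kernel $\widetilde q_\beta$ on an ambient compact manifold; hence
\[
q(t,x,y)\;\ge\; \psi_\alpha(x)\rho_\alpha(y)\,q_\alpha(t,x,y)\;\ge\; c_0\, q_\alpha(t,x,y).
\]
Within $q_\alpha$ from \eqref{eq:local_heat_kernel} I would retain only the $\gamma\in G_\alpha$ minimizing $d_{\widetilde W_\alpha}(\widetilde x,\gamma\widetilde y)$; after relabeling this is $\gamma=\mathrm{id}$, with $d_{\widetilde W_\alpha}(\widetilde x,\widetilde y)=d(x,y)$. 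The other summands, if supported where $\eta$ does not vanish, contribute non-negatively (their leading coefficients $u_0(\widetilde x,\gamma\widetilde y)$ are positive by continuity provided the chart sits in a normal neighborhood of $\widetilde x$). Since $u_0$ is a Riemannian invariant continuous near the diagonal with $u_0(\widetilde x,\widetilde x)=1$, shrinking $\epsilon_0$ if necessary gives $u_0(\widetilde x,\widetilde y)\ge 1/2$ and $\eta(d_{\widetilde W_\alpha}(\widetilde x,\widetilde y))=1$ whenever $d(x,y)<\epsilon_0/4$. Hence, for $t$ sufficiently small,
\[
q_\alpha(t,x,y)\;\ge\; \frac{e^{-d(x,y)^2/2t}}{(2\pi t)^{n/2}}\Bigl(\tfrac12+O(t)\Bigr)\;\ge\; \frac{C_0}{t^{n/2}}\,e^{-d(x,y)^2/2t}
\]
for a universal $C_0>0$.

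Finally I would check that the correction $(q\sharp\mathscr Q_m)(t,x,y)$ does not destroy this estimate. Using the sharpened bound alluded to at the end of the paragraph containing \eqref{eq:asymp_pm} (cf. the forward reference \eqref{eq:est_convol}), which upgrades the crude polynomial control of $\mathscr Q_m$ by a Gaussian factor $e^{-d(x,y)^2/2t}$, the convolution is dominated in absolute value by a constant times $t^{m-n/2+1}\cdot t^{-n/2}\, e^{-d(x,y)^2/2t}$. For any fixed $m>n/2+2$ and $t<\epsilon\le\epsilon_0$ chosen small, this is absorbed into the principal term above, and one concludes $p_M(t,x,y)\ge C_1 t^{-n/2}e^{-d(x,y)^2/2t}$ with $C_1$ independent of $x,y$.

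The main obstacle I anticipate is arranging the partition-of-unity step uniformly across $M$: ensuring that for every $(x,y)$ with $y\in B(x;\epsilon_0)$ a single index $\alpha$ produces a \emph{uniform} positive lower bound on $\psi_\alpha(x)\rho_\alpha(y)$ is standard on manifolds but slightly delicate on an orbifold because the local groups $G_\alpha$ vary in order; this is handled by refining the cover with the aid of compactness. The other genuinely analytic ingredient is the Gaussian-improved convolution estimate \eqref{eq:est_convol}, without which the exponentially small leading term would be washed out by the polynomial remainder coming from successive approximation.
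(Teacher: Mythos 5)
Your proposal is correct and follows essentially the same route as the paper's own proof: exploit the strict positivity of the local kernels $\widetilde q_\alpha$ from Remark \ref{rmk:local_heat_kernel} to retain only the group element realizing $\min_{\gamma\in G_\alpha}d_{\widetilde W_\alpha}(\widetilde x,\gamma\widetilde y)=d(x,y)$ (the content of \eqref{eq:key_obser_1} and Lemma \ref{:near_plemmats}), control the successive-approximation remainder by the Gaussian-weighted bound \eqref{eq:est_convol}, and use compactness of $M$ to make $\epsilon$ and the constant uniform. The only cosmetic difference is your single-chart lower bound $\psi_\alpha(x)\rho_\alpha(y)\ge c_0$, where the paper instead absorbs the patching into a cut-off $\psi(x,y)\equiv 1$ near the diagonal in \eqref{eq:heat_kernel_ge}; both are routine.
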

\begin{proof}
Here the arguments are different from \cite[Theorem 5.3.4]{hsu2002stochastic} not only because our $M$ is an orbifold but also because we only need the small time behavior of $p_M(t,x,y)$ for near points $x$, $y$ instead of all $(x,y)\in M\times M$ (as in \cite{hsu2002stochastic}). In fact, we find that arguments in \cite[p. 151--155]{chavel1984eigenvalues} are more suitable to our needs.

A word of caution is in order. Since the error term arising from \eqref{eq:asymp_pm} is of order $O(t^N)$ (for any fixed integer $N>0$), this error is still too large to validate \eqref{eq:ker_est_M_lower} due to the fast decay of the nontrivial exponential factor in \eqref{eq:asymp_pm} for $x\neq y$. The \eqref{eq:asymp_pm} does not seem directly applicable here.

One advantage of local heat kernels is the {\it positivity}: $\widetilde q_\alpha(t,\widetilde x,\gamma\widetilde y)>0$ (see Remark \ref{rmk:local_heat_kernel}).  Using it for a prescribed pair of near points $x$ and $y$ with $t\ll 1$ yields by \eqref{eq:local_heat_kernel}
\begin{equation}
\label{eq:key_obser_1}
q_{\alpha}(t,x,y)\ge \frac{e^{-d(x,y)^2/2t}}{(2\pi t)^{n/2}}\Big( \eta(d(x,y))\sum_{j=0}^m t^ju_j(\widetilde x,\gamma_0\widetilde y)+O(t^{m+1})\Big)\;\text{ for some }\gamma_0.
\end{equation}
It is derived by using the positivity above to only keep the term with $\gamma_0\in G_\alpha$ which satisfies $d_{\widetilde W_\alpha}(\widetilde x,\gamma_0\widetilde y)=\min_{\gamma\in G_\alpha}d_{\widetilde W_\alpha}(\widetilde x,\gamma\widetilde y)$ and by the fact that $\min_{\gamma\in G_\alpha}d_{\widetilde W_\alpha}(\widetilde x,\gamma\widetilde y)$ equals $d(x,y)$ for appropriate choice of orbifold charts $\widetilde W_\alpha$ containing $x, y$ (cf. \cite[Definition 1.1]{lange2020orbifolds} together with a version of Hopf-Rinow Theorem on $M$ \cite[p. 35]{bridson1999metric}). See also Lemma \ref{:near_plemmats} below. The requirement that $x$ and $y$ are near points ensures $\eta(d(x,y))>0$.

Using \cite[Lemmas 1--3, p. 152--153]{chavel1984eigenvalues} for our $q(t,x,y)$ (see Remark \ref{rmk:local_heat_kernel} above) and after going through similar computations using $d(x,y)$ in place of $d_{\widetilde W_\alpha}(\widetilde x,\gamma\widetilde y)$ for \cite[top lines in p. 154]{chavel1984eigenvalues} in a way similar to \eqref{eq:key_obser_2} below, we can obtain the following estimation (cf. \cite[p. 154 and (39), p. 152]{chavel1984eigenvalues}):
\begin{equation}
\label{eq:est_convol}
|\mathscr R_m^{\sharp j}(t,x,y)|\le \frac{A^jV^{j-1}t^{m+j-n/2+1}e^{-d(x,y)^2/2t}}{(m+j-n/2-1)\cdots(m+1-n/2)}
\end{equation}
where $A=\sup_{[0,1]\times M\times M}\big|\frac{\mathscr R_m}{t^{m-n/2}e^{-d(x,y)^2/2t}}\big|$ and $V=\mathrm{vol}(M)$.
Combining \eqref{eq:asymp_expand} (for $q(t,x,y)$ in place of $H^{(m)}$ by Remark \ref{rmk:local_heat_kernel}), \eqref{eq:est_convol} (including a similar estimate on $q\sharp \mathscr Q_m$ by using \eqref{eq:asymp_expand} 
for $\mathscr Q_m$)
and \eqref{eq:key_obser_1}, leads to (cf. \cite[p. 154]{chavel1984eigenvalues})
\begin{equation}
\label{eq:heat_kernel_ge}
p_M(t,x,y)\ge \frac{e^{-d(x,y)^2/{2t}}}{(2\pi t)^{n/2}}\Big\{\psi(x,y)\sum_{j=0}^m t^ju_j(\widetilde x,\gamma_0\widetilde y)+O(t^{m+1})\Big\}\text{ for }m>n/2+2
\end{equation}
where $u_0(\widetilde x,\widetilde x)=1$ and $\psi$ a cut-off function ($\equiv 1$ near $x=y$). 

There exist $\epsilon_1>0$ such that for $\gamma_0\widetilde y\in B(\widetilde x;\epsilon_1)$, $|u_0(\widetilde x,\gamma_0\widetilde y)-u_0(\widetilde x,\widetilde x)|\le 1/2$ (which implies $u_0(\widetilde x,\gamma_0\widetilde y)\ge 1/2>0$)  and $\epsilon_2>0$ such that $\left|\sum_{j= 1}^mt^ju_j(\widetilde x,\gamma_0\widetilde y)\right|\le u_0(\widetilde x,\gamma_0\widetilde y)/2$ for $t\in (0,\epsilon_2)$. Also, there exists $\epsilon_3>0$ such that $\psi(x,y)=1$ for $y\in B(x;\epsilon_3)$. Choosing $\epsilon=\min\{\epsilon_1,\epsilon_2,\epsilon_3\}$ and making it 
independent of the choice of $x$ by compactness of $M$, we obtain the proposition from \eqref{eq:heat_kernel_ge}.
\end{proof}

\begin{rmk}
\label{distant}
For use in the semimartingale property (see Ramark \ref{rmk:Brownian_bridge}) 
it is desirable to extend the above estimate to 
distant points (cf. \cite[p.141]{hsu2002stochastic} on manifolds).   For orbifolds we refer to 
Appendix  \ref{appendix:distant} for a proof. 
\end{rmk}

\begin{prop}
\label{prop:ker_est_M_upper}
For every $t\in (0,1)$ and $x$, $y\in M$ (not necessarily near points),
\begin{equation}
\label{eq:ker_est_M_upper}
p_M(t,x,y) \le \frac{C_2}{t^{n/2}}e^{- d(x, y)^2/2t}\,\,\, (\le  \frac{C_2}{t^{n/2}}) 
\end{equation}
where $C_2$ denotes some universal constant independent of $t$ and the choice of $x$ and $y$.
\end{prop}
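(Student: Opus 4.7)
The plan is to adapt the parametrix construction used for the lower bound in Proposition~\ref{prop:ker_est_M_lower}, but now extracting a Gaussian \emph{upper} envelope. Starting from the successive-approximation representation in \eqref{eq:asymp_expand} (with local heat kernels as in Remark~\ref{rmk:local_heat_kernel}),
\[
p_M(t,x,y)=q(t,x,y)+(q\sharp\mathscr{Q}_m)(t,x,y)\quad (m>n/2+2),
\]
I would bound each piece separately and show each is dominated by $C t^{-n/2}e^{-d(x,y)^2/2t}$ for $t\in(0,1)$, uniformly in $x,y\in M$.

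For the principal part $q(t,x,y)=\sum_\alpha \psi_\alpha(x)\rho_\alpha(y)q_\alpha(t,x,y)$, the key observation is the pointwise inequality
\[
d_{\widetilde W_\alpha}(\widetilde x,\gamma\widetilde y)\ge d(x,y)\qquad \forall\gamma\in G_\alpha,
\]
since the orbifold distance is the infimum of $\gamma$-lifted distances over the (finite) isotropy groups $G_\alpha$. Hence each Gaussian factor in \eqref{eq:local_heat_kernel} is dominated by $e^{-d(x,y)^2/2t}$. Combined with uniform boundedness of the analytic coefficients $u_j$ on compact pieces, finiteness of $|G_\alpha|$, and the fact that $\psi_\alpha(x)\rho_\alpha(y)=0$ whenever $x,y$ lie in disjoint supports (so $q\equiv0$ there, trivially consistent with the bound), this gives $|q(t,x,y)|\le Ct^{-n/2}e^{-d(x,y)^2/2t}$ globally.

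For the remainder $q\sharp\mathscr{Q}_m$, the essential device is the Cauchy--Schwarz inequality in Engel form $\tfrac{a^2}{u}+\tfrac{b^2}{v}\ge\tfrac{(a+b)^2}{u+v}$ applied with $a=d(x,z)$, $b=d(z,y)$, $u=t-s$, $v=s$, combined with the orbifold triangle inequality $d(x,z)+d(z,y)\ge d(x,y)$ (valid because the orbifold metric is a length metric, cf.\ \cite{bridson1999metric}). This yields the Gaussian-preservation estimate
\[
e^{-d(x,z)^2/2(t-s)}\,e^{-d(z,y)^2/2s}\le e^{-d(x,y)^2/2t},
\]
so each $\sharp$-convolution costs at most an extra polynomial factor in $t$ (times $\operatorname{vol}(M)$). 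Iterating this, together with the bound \eqref{eq:est_convol} on each $\mathscr R_m^{\sharp j}$ and summing the series (the factorial denominators ensure convergence), produces $|\mathscr{Q}_m(t,x,y)|\le C t^{m-n/2+1}e^{-d(x,y)^2/2t}$, and one more convolution with $q$ yields a bound on $q\sharp\mathscr{Q}_m$ of higher order in $t$, hence absorbable into $Ct^{-n/2}e^{-d(x,y)^2/2t}$.

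The main obstacle is ensuring the Gaussian-propagation step is uniform as the integration variable $z$ passes near lower-dimensional strata of $M$: this is handled by observing that the distance $d$ (as a length metric on the compact orbifold) satisfies the ordinary triangle inequality globally, so no modification beyond the manifold case is needed. The parenthetical bound $C_2/t^{n/2}$ follows at once from $e^{-d(x,y)^2/2t}\le 1$.
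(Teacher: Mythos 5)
Your proposal is correct and follows essentially the same route as the paper: you dominate each Gaussian factor of the local kernels $q_\alpha$ by $e^{-d(x,y)^2/2t}$ via $d_{\widetilde W_\alpha}(\widetilde x,\gamma\widetilde y)\ge d(x,y)$ (the paper's \eqref{eq:key_obser_2}), and then control the successive-approximation remainder by the Gaussian-preserving $\sharp$-convolution estimate together with \eqref{eq:est_convol}. Your Engel-form inequality combined with the orbifold triangle inequality is precisely the content of the Chavel lemma (Lemma 3, p.~153) that the paper invokes at this step, so the two arguments coincide in substance.
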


\begin{proof}

For any $t\in (0,1)$ and any near points $x$, $y\in M$ (before the successive approximation),
\begin{equation}
\label{eq:key_obser_2}
\begin{split}
\big(q_{\alpha}(t,x,y)=\big)&\sum_{\gamma\in G_\alpha}\frac{e^{-d_{\widetilde W_\alpha}(\widetilde x,\gamma\widetilde y)^2/2t}}{(2\pi t)^{n/2}}\Big( \eta(d_{\widetilde W_\alpha}(\widetilde x,\gamma\widetilde y))\sum_{j=0}^m t^ju_j(\widetilde x,\gamma\widetilde y)+O(t^{m+1})\Big)\\
&\hspace{1cm}\le \frac{e^{-d(x, y)^2/2t}}{(2\pi t)^{n/2}}\sum_{\gamma\in G_\alpha}\Big( \eta(d_{\widetilde W_\alpha}(\widetilde x,\gamma\widetilde y))\sum_{j=0}^m t^ju_j(\widetilde x,\gamma\widetilde y)+O(t^{m+1})\Big).
\end{split}
\end{equation}
The inequality merely replaces each $d_{\widetilde W_\alpha}(\widetilde x,\gamma\widetilde y)$ in the exponential term by $d_{\widetilde W_\alpha}(\widetilde x,\gamma_0\widetilde y)$ (cf. \eqref{eq:key_obser_1}). Following arguments similar to that from \eqref{eq:est_convol} to \eqref{eq:heat_kernel_ge}, after the successive approximation using especially \cite[Lemma 3, p. 153]{chavel1984eigenvalues} one sees \eqref{eq:ker_est_M_upper} for $x$, $y$ which are not necessarily near points.
\end{proof}

The following remark will be needed in \eqref{eq:boundedness} and \eqref{rmk:degen_case}.
\begin{rmk}
\label{rmk:x_0}
At a given point $x_0\in M$, we claim $p_M(t,x_0,x_0)$ $\sim \left({1}/{2\pi t}\right)^{n/2}\cdot p\;\text{as $t\to 0^+$}$ where $p$ is the order of the isotropy subgroup (of $\sone$) at $x_0$. For, by arguments similar to the proof of Proposition \ref{prop:ker_est_M_lower} (see also Lemma \ref{:near_plemmats} below), we have $p_M(t,x_0,x_0)\ge \left({1}/{2\pi t}\right)^{n/2}\cdot p$ (as $t$ small). A similar argument with \eqref{eq:ker_est_M_upper} gives the upper bound $\left({1}/{2\pi t}\right)^{n/2}\cdot p (1+O(t))$. 
\end{rmk}

\subsection{Distance on Orbifolds}
We first review Hsu's approach that gives a gradient estimate for the heat kernel. Then we show that the similar gradient estimate still holds for orbifolds.


This gradient estimate (cf. Theorem \ref{thm:grad_est}) is implied by a lemma for lifted functions on $\ortho{M}$ (cf. Lemma \ref{lemma:ineq_lift_func}) and by the bounds for the associated heat kernel (cf. Propositions \ref{prop:ker_est_M_lower} and \ref{prop:ker_est_M_upper}). Let us first set up the notation\footnote{In what follows the Laplacian, distance function, etc. work as if they were under the (smooth) manifold setting by the discussion in Appendix \ref{appendix:adaptation}. See also Subsection \ref{subsection: orbibundle}. For the foundation of probabilistic aspects (e.g. Brownian motion) on orbifolds, see Appendix \ref{appendix:L_diffusion}.}.

\begin{defn}
Define $J(t,\widehat x)\coloneqq \ln p_M(T-t,\widehat\pi\widehat x,y)$ as the lift of $\ln  p_M(t,x,y)$ on $\ortho{M}$. The horizontal gradient is $\nabla^HJ=\big( H_1J,\dotsc, H_nJ \big)$.
\end{defn}

\begin{lemma}
\label{lemma:ineq_lift_func}
Recall that $M=X/\sone$ is a compact orbifold and let 
\[
E_i=\frac{1}{2}\left[\Laplace_\ortho{M}^H,H_i\right]J+\left\langle\nabla^H H_iJ,\nabla^H J\right\rangle-\frac{1}{2}H_i\left\langle\nabla^H J,\nabla^H J\right\rangle.
\]
There is a constant $C$ such that $|E_i|\le C|\nabla^H J|$.
\end{lemma}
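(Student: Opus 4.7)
My plan is to exploit two structural facts. First, $J$ is a basic function: by definition $J(t,\widehat x)=\ln p_M(T-t,\widehat\pi\widehat x,y)$ is pulled back from $M$ under $\widehat\pi:\ortho{M}\to M$, hence is invariant under the right $SO(n)$-action, so every vertical vector field annihilates $J$. Second, on an orthonormal frame bundle endowed with the Levi--Civita connection the torsion vanishes, so the commutator of two fundamental horizontal vector fields is purely vertical: $[H_j,H_i]=-\Omega(H_j,H_i)^\ast$, where $\Omega\in so(n)$ is the curvature 2-form pulled to $\ortho{M}$ and $(\cdot)^\ast$ denotes the associated fundamental vertical vector field. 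Although $M$ is only an orbifold, by Subsection~\ref{subsection: orbibundle} the frame bundle $\ortho{M}$ is an honest smooth manifold and the identities above hold verbatim on it.

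My first move is to simplify $E_i$ using these facts. For the second and third terms, a direct computation yields
\[
\langle\nabla^H H_iJ,\nabla^HJ\rangle-\tfrac{1}{2}H_i\langle\nabla^HJ,\nabla^HJ\rangle=\sum_j H_jJ\cdot[H_j,H_i]J,
\]
and since $[H_j,H_i]$ is vertical and $J$ basic, each $[H_j,H_i]J=0$, so these two terms cancel exactly. I am therefore reduced to $E_i=\tfrac{1}{2}[\Laplace^H_{\ortho{M}},H_i]J$. Expanding $[\sum_j H_j^2,H_i]=\sum_j\bigl(H_j[H_j,H_i]+[H_j,H_i]H_j\bigr)$ and applying the same basic argument to the function $[H_j,H_i]J\equiv0$ kills the first summand, leaving
\[
E_i=\tfrac{1}{2}\sum_j[H_j,H_i](H_jJ).
\]

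The second step is to evaluate the vertical action on the non-basic function $H_jJ$. Writing $J=\widehat\pi^{\ast}\widetilde J$ one has $(H_jJ)(\widehat x)=\langle d\widetilde J_{\widehat\pi\widehat x},\widehat x\,e_j\rangle$. For $A\in so(n)$ the equivariance $\widehat x\mapsto\widehat x\,e^{tA}$ gives
\[
(A^{\ast}H_jJ)(\widehat x)=\tfrac{d}{dt}\Big|_{t=0}\langle d\widetilde J,\widehat x\,e^{tA}e_j\rangle=\sum_k A_{kj}(H_kJ)(\widehat x).
\]
Setting $A=-\Omega(H_j,H_i)(\widehat x)$ and summing on $j$ gives the identity
\[
E_i=-\tfrac{1}{2}\sum_{j,k}\Omega(H_j,H_i)_{kj}\,H_kJ,
\]
which is manifestly linear in $\nabla^H J$.

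The final step is the bound. The coefficients $\Omega(H_j,H_i)_{kj}$ are components of the Riemann curvature tensor of $M$ pulled back to $\ortho{M}$. Since $M$ is a compact orbifold, the Riemannian curvature is bounded uniformly: locally on each orbifold chart $(\widetilde W_\alpha,G_\alpha,\pi_\alpha)$ the metric lifts to a smooth $G_\alpha$-invariant metric on $\widetilde W_\alpha$ whose curvature is bounded by smoothness and precompactness, and finiteness of the covering yields a global bound. The Cauchy--Schwarz inequality then gives $|E_i|\le C|\nabla^HJ|$ with $C$ depending only on $n$ and the curvature sup-norm. The one step that will require a brief verification is the orbifold version of the structure equation $[H_j,H_i]=-\Omega(H_j,H_i)^{\ast}$ (and hence the basic-function cancellation above) at points of $\ortho{M}$ lying over lower-dimensional strata of $M$; this is handled by passing to local uniformizing charts, where $\ortho{M}$ is the quotient of the genuine orthonormal frame bundle of the smooth cover by the finite isotropy group, and the structure equation there is classical.
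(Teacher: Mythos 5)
Your proof is correct and follows essentially the same route as the paper, which simply invokes Hsu's frame-bundle argument (\cite[Lemma 5.5.2]{hsu2002stochastic}) together with the observation that $\ortho{M}$ is a genuine smooth manifold and that the argument carries over by passing to orbifold charts near the lower-dimensional strata. Your write-up merely makes explicit the computation behind that citation: the cancellation of the last two terms via the vertical commutator $[H_j,H_i]=-\Omega(H_j,H_i)^{\ast}$ acting on the basic function $J$, the resulting identity $E_i=-\tfrac12\sum_{j,k}\Omega(H_j,H_i)_{kj}H_kJ$ (a Ricci contraction of $\nabla^H J$), and the uniform curvature bound from compactness.
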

\begin{proof}
This lemma is part of \cite[Lemma 5.5.2]{hsu2002stochastic}. We note that for an ordinary manifold $N$, the proof of \cite{hsu2002stochastic} is given for the orthonormal frame bundle $\mathscr{O}(N)$. Nevertheless, these arguments of \cite{hsu2002stochastic} remain applicable to our situation (after going to orbifold charts on $M$).
\end{proof}

\begin{rmk}
The second order estimate $E_{ij}$ in \cite{hsu2002stochastic} is not used in the present work.
\end{rmk}

The needed gradient estimate, for short time and near points,  goes as follows.

\begin{thm}
\label{thm:grad_est}
Let $M$ be a compact orbifold. Then there are constants $\,C>0$, $\delta>0$ such that for any $T\in (0,\delta)$ and near points $x$, $y\in M$,
\[
|\nabla \ln p_M(T,x,y)|\le C \left[ \frac{d(x,y)}{T}+\frac{1}{\sqrt{T}}\right].
\]
\end{thm}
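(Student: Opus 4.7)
The strategy I propose is to adapt the probabilistic argument of Hsu \cite[Theorem 5.5.3]{hsu2002stochastic} to the orbifold setting. The crucial observation is that while $M$ is an orbifold, the frame bundle $\ortho{M}$ is already a smooth manifold (cf.\ Subsection \ref{subsection: orbibundle}), so that Hsu's intrinsic stochastic calculus on $\ortho{M}$ carries over essentially verbatim. The orbifold structure of $M$ enters only through the two-sided heat kernel bounds (Propositions \ref{prop:ker_est_M_lower} and \ref{prop:ker_est_M_upper}) and the distance function, both of which are already at our disposal.

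Since $\nabla^H J(0,\widehat x)$ is the horizontal-frame representative at $\widehat x$ of $-\nabla\ln p_M(T,x,y)$ (with $\widehat\pi\widehat x=x$), it suffices to bound $|\nabla^H J(0,\widehat x)|^2$ by $C[d(x,y)^2/T^2+1/T]$ for $T$ small and $x,y$ nearby. A direct computation from the heat equation for $p_M$ shows that $J$ satisfies the nonlinear backward equation $\partial_t J+\tfrac{1}{2}\Laplace^H_{\ortho{M}}J+\tfrac{1}{2}|\nabla^H J|^2=0$ on $[0,T)\times\ortho{M}$.

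The heart of the proof is an It\^o computation for $|\nabla^H J|^2(t,\widehat X_t)$ along a horizontal Brownian motion $\widehat X_t$ starting at $\widehat x$. Expanding $d(H_i J)^2$ via It\^o's formula, using the equation for $J$ to rewrite $\partial_t H_iJ$, and invoking the commutator identity packaged in Lemma \ref{lemma:ineq_lift_func}, one obtains, after summing over $i$, a stochastic identity of the form $d|\nabla^H J|^2=dN_t+(\text{drift})\,dt$ in which the estimate $|E_i|\le C|\nabla^H J|$ forces the drift to be controlled by $C'(|\nabla^H J|^2+1)$. Integrating from $0$ to $T/2$, taking expectations, and substituting the bounds on $J$ that come from taking the logarithm of \eqref{eq:ker_est_M_lower} and \eqref{eq:ker_est_M_upper} (which together give $J(t,\widehat x)$ pinched between $-d(x,y)^2/(2(T-t))$ and a comparable quantity plus $O(\log(1/(T-t)))$), one extracts the desired pointwise estimate for $|\nabla^H J(0,\widehat x)|$ via a Gronwall-type argument.

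The main obstacle I anticipate is the interplay between the smooth stochastic calculus on $\ortho{M}$ and the singular strata of $M$. Although Lemma \ref{lemma:ineq_lift_func} and the It\^o computation live entirely on the smooth bundle $\ortho{M}$, the heat kernel input uses the distance $d(\cdot,\cdot)$ on $M$, and one must verify that chart transitions at singular points, together with horizontal lifts of minimizing geodesics, commute with the probabilistic manipulations. These foundational points are handled by appealing to Appendices \ref{appendix:L_diffusion} and \ref{appendix:adaptation}. The restrictions to small $T$ and to near points $x, y$ trace back directly to the hypotheses of Proposition \ref{prop:ker_est_M_lower}; extending to distant points would require the refinement indicated in Remark \ref{distant}.
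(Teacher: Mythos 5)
Your overall strategy coincides with the paper's: transplant Hsu's Theorem 5.5.3 to the orbifold through the smooth frame bundle, with the orbifold entering only via Propositions \ref{prop:ker_est_M_lower}, \ref{prop:ker_est_M_upper}, the orbifold distance, and Lemma \ref{lemma:ineq_lift_func}. But the mechanism you describe has a genuine gap, in two respects. First, the process: the quantity $E_i$ of Lemma \ref{lemma:ineq_lift_func} is the drift of $H_iJ(s,\widehat X_s)$ only when $\widehat X_s$ is the horizontal lift of the Brownian \emph{bridge} from $x$ to $y$ over $[0,T]$, whose generator carries the extra drift $\nabla^H J$; it is precisely this extra drift that cancels the Hessian contraction $\langle\nabla^H H_iJ,\nabla^H J\rangle$ against $\tfrac12 H_i|\nabla^H J|^2$, leaving only curvature terms of size $C|\nabla^H J|$. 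Along the plain horizontal Brownian motion you specify, the drift of $H_iJ(s,\widehat X_s)$ is $\tfrac12[\Laplace^H_{\ortho{M}},H_i]J-\tfrac12 H_i|\nabla^H J|^2$, which contains second derivatives of $J$ not controlled by $|\nabla^H J|$; after squaring and summing, the drift of $|\nabla^H J|^2$ picks up a term of order $|\nabla^H J|^2\,\|\nabla^H\nabla^H J\|$, so it is not bounded by $C'(|\nabla^H J|^2+1)$ and your Gronwall step does not close. (This is also why the bridge enters the paper through Theorem \ref{thm:distance_est} and Remark \ref{rmk:Brownian_bridge}.)

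Second, and more decisively, the kernel bounds cannot be ``substituted'' into the It\^o expansion of $|\nabla^H J|^2$: Propositions \ref{prop:ker_est_M_lower} and \ref{prop:ker_est_M_upper} pinch $J$ itself, not its gradient, so they yield no control on $\mathbb{E}|\nabla^H J|^2(s,\widehat X_s)$, which is exactly the quantity your Gronwall comparison needs on its right-hand side. The missing ingredient is the paper's first step: apply It\^o to $J(s,\widehat X_s)$ itself along the bridge (using the backward equation you wrote down), take expectations up to time $T/2$, and convert the two-sided kernel bounds into the integrated estimate $\mathbb{E}\int_0^{T/2}|\nabla^H J(s,\widehat X_s)|^2\,\mathrm ds\le C_1\big[d(x,y)^2/T+1\big]$, i.e.\ \eqref{eq:ker_1}. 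Once \eqref{eq:ker_1} is in hand and you work along the bridge, your ``It\^o on $|\nabla^H J|^2$ plus Gronwall'' step becomes a workable variant of the paper's second step (the paper instead applies the drift bound to $H_iJ$ to get $T|\nabla^H J(0,\widehat x_0)|\le C_3\,\mathbb{E}\int_0^{T/2}|\nabla^H J(s,\widehat X_s)|\,\mathrm ds$ and finishes with Cauchy--Schwarz); as written, however, your argument never lets the heat-kernel estimates enter, so the pointwise bound is not actually derived.
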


\begin{proof}
We divide the proof into three steps. The first step is to reach
\begin{equation}
\label{eq:ker_1}
\mathbb{E}\int_0^{T/2} |\nabla^H J(s,\widehat X_s)|^2\mathrm ds\le C_1\left[\frac{d(x,y)^2}{T}+1\right]
\end{equation}
as in \cite[(5.5.9)]{hsu2002stochastic}. Here essential use is made of the estimates in our Propositions \ref{prop:ker_est_M_lower} and \ref{prop:ker_est_M_upper} on $p_M(t,x,y)$. The second step uses Lemma \ref{lemma:ineq_lift_func} to readily get $T|\nabla^H J(0,\widehat x_0)|\le C_3\mathbb{E}\int_0^{T/2}|\nabla^HJ(s,\widehat X_s)|\mathrm ds$, and then applies the Cauchy-Schwarz inequality to the RHS. This, together with \eqref{eq:ker_1}, concludes the proof of Theorem \ref{thm:grad_est}. For further details we refer to \cite[Theorem 5.5.3]{hsu2002stochastic} (in which the proof uses \cite[Corollary 5.3.5]{hsu2002stochastic} and is not restricted to short time and near points).
\end{proof}

\begin{rmk}
\label{gradientlogheatkernel}
To extend the result to distant points (due to the stochastic process, 
cf. Remark \ref{rmk:Brownian_bridge} and Appendix \ref{appendix:distant}), the lower bound estimate 
for distant points (cf. Proposition \ref{prop:ker_est_M_lower} and Remark \ref{distant})  
will be useful (cf. \cite[(5.5.8) and the equation below it]{hsu2002stochastic});  see Appendix \ref{appendix:distant}. 
\end{rmk} 

The gradient estimate just derived enables one to prove the following inequality.

\begin{thm}
\label{thm:distance_est}
There are constants $\,C>0$, $\delta>0$ such that for every $T\in (0,\delta)$ and near points $x$ and $y\in M$, $\mathbb{E}_{x,y;t}d(X_t,y)^2\le C\left( d(x,y)^2+\min\{t,T-t\}\right),\,\, 0\le t\le T$.
\end{thm}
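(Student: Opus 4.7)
The plan is to apply It\^o's formula on the Brownian bridge from $x$ to $y$ over $[0,T]$ to the squared-distance function $\rho(z):=d(z,y)^2$, control the resulting drift by means of the gradient estimate of Theorem~\ref{thm:grad_est}, and close up by a Gr\"onwall argument. By the time-reversal symmetry of the bridge (the process $s\mapsto X_{T-s}$ under $\mathbb{P}_{x,y;T}$ is the bridge from $y$ to $x$ over $[0,T]$), it suffices to prove $\mathbb{E}_{x,y;T}d(X_t,y)^2\le C(d(x,y)^2+t)$ for $0\le t\le T/2$; the case $t\ge T/2$ then follows by swapping $x\leftrightarrow y$.

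Under $\mathbb{P}_{x,y;T}$, Doob's $h$-transform identifies $X_s$ on $[0,T)$ as the diffusion with generator $\tfrac{1}{2}\Laplace_M+\nabla\log p_M(T-s,\cdot,y)\cdot\nabla$; the semimartingale property is provided by Remark~\ref{rmk:Brownian_bridge}. Working on an orbifold chart $(\widetilde W_\alpha,G_\alpha,\pi_\alpha)$ containing $y$ (as set up in Subsection~\ref{subsection: orbibundle} and Appendix~\ref{appendix:adaptation}), It\^o's formula applied to $\rho$ yields
\[
\rho(X_t)=d(x,y)^2+(\text{local martingale})+\int_0^t\!\left[\tfrac12\Laplace\rho(X_s)+\langle\nabla\rho(X_s),\nabla\log p_M(T-s,X_s,y)\rangle\right]\mathrm ds.
\]
Taking expectations and using $|\nabla\rho|=2d(\cdot,y)$, the Laplacian comparison bound $\Laplace\rho\le C$ (valid on the compact uniformizing cover, and globally after the usual smoothing near the cut locus of $y$), and the gradient estimate of Theorem~\ref{thm:grad_est}, we arrive, with $\phi(s):=\mathbb{E}_{x,y;T}\rho(X_s)$, at
\[
\phi(t)\le d(x,y)^2+C\int_0^t\!\left[1+\frac{\phi(s)}{T-s}+\frac{\sqrt{\phi(s)}}{\sqrt{T-s}}\right]\mathrm ds.
\]
For $t\le T/2$ one has $T-s\ge T/2$, so after the elementary bound $\sqrt{\phi}/\sqrt{T-s}\le \phi/(T-s)+1$ this reduces to a linear inequality $\phi(t)\le d(x,y)^2+C't+\tfrac{C''}{T}\int_0^t\phi(s)\,\mathrm ds$, whence Gr\"onwall gives $\phi(t)\le e^{C''t/T}\bigl(d(x,y)^2+C't\bigr)\le C(d(x,y)^2+t)$ since $e^{C''t/T}\le e^{C''/2}$.

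The main obstacles are the cut locus of $y$ (where $\rho$ is not smooth) and the orbifold singularities. For the former, standard techniques---regularizing $\rho$ by a smooth majorant and passing to the limit, or invoking Kendall's barrier/martingale method---suffice. For the latter, since the Laplacian comparison, the heat kernel bounds of Propositions~\ref{prop:ker_est_M_lower} and~\ref{prop:ker_est_M_upper}, and the gradient estimate all admit orbifold-chart versions, the whole computation lifts to $\widetilde W_\alpha$ and descends to $M$. A secondary point is that Theorem~\ref{thm:grad_est} is stated only for near points, so the estimate on the drift for faraway pairs must be supplied via the distant-point extension indicated in Remark~\ref{gradientlogheatkernel}. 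The Gr\"onwall step itself is routine once the drift bound is in place; the genuine work lies in justifying It\^o's formula for $\rho$ at the cut locus and in verifying compatibility of the bridge drift with the orbifold structure.
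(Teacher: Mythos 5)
Your overall strategy is the same one the paper invokes: this is exactly Hsu's proof of his Proposition 5.5.4 (Brownian bridge as a Doob $h$-transform with drift $\nabla\log p_M(T-s,\cdot,y)$, It\^o's formula on a squared-distance-type function, the log-gradient estimate of Theorem~\ref{thm:grad_est}, the bridge time-reversal to reduce to $t\le T/2$, and a Gr\"onwall closure), and the paper's proof consists of citing that argument and verifying the one ingredient that does not come for free on an orbifold. That ingredient is where your proposal has a genuine gap. You apply It\^o's formula to $\rho(z)=d(z,y)^2$ and dismiss the orbifold issue with ``the whole computation lifts to $\widetilde W_\alpha$ and descends to $M$,'' but on an orbifold $d(y,z)=\min_{\gamma\in G_\alpha}d_{\widetilde W_\alpha}(\widetilde y,\gamma\widetilde z)$, and this minimum over group elements can create non-smoothness (quite apart from the Riemannian cut locus) wherever the minimizing $\gamma$ switches, e.g.\ near the singular strata. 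The lifted function on $\widetilde W_\alpha$ that descends to $d(\cdot,y)^2$ is \emph{not} simply $d_{\widetilde W_\alpha}(\widetilde y,\cdot)^2$ unless one proves that for near points the minimum is realized by the identity coset. That statement is precisely the paper's Lemma~\ref{:near_plemmats}: either the isotropy $G_0$ of $\widetilde y$ is proper in $G_\alpha$ and then $d_{\widetilde W_\alpha}(\widetilde y,\widetilde z)$ is the strict minimizer among nontrivial cosets, or $G_0=G_\alpha$ and all translates give the same distance. Only with this in hand does the smooth function $f(z)=|z|^2$ in normal coordinates of the chart serve as the (squared) orbifold distance for near points, so that the comparability and gradient bounds in \eqref{eq:dist_func} hold and Hsu's computation goes through. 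Your write-up silently assumes this, which is the entire content of the paper's proof of this theorem.

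Two smaller points. First, your main announced obstacle, the cut locus of $y$, is sidestepped in the paper (following Hsu) by never using $d(\cdot,y)^2$ itself: one works with a globally smooth function merely comparable to $d(y,\cdot)^2$ and satisfying $|\nabla f|\le\widetilde C\sqrt f$ (and bounded $\Laplace f$), so no Kendall-type regularization or It\^o--Tanaka correction is needed; if you insist on $\rho=d^2$ you take on extra work the paper avoids. Second, you correctly note that the drift bound must be available when $X_s$ is far from $y$ even though $x,y$ are near, and that this requires the distant-point extensions of Proposition~\ref{prop:ker_est_M_lower} and Remark~\ref{gradientlogheatkernel}; that part of your proposal is consistent with the paper (cf.\ Remark~\ref{rmk:Brownian_bridge} and Appendix~\ref{appendix:distant}). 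The Gr\"onwall step you sketch is fine, but to make the proof complete you must supply the analogue of Lemma~\ref{:near_plemmats} (or an equivalent smoothness/identification statement for the orbifold distance near singular points) rather than assert the chart-descent.
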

\begin{proof}
We shall adapt Hsu's approach to our situation (cf. \cite[Proposition 5.5.4]{hsu2002stochastic}).  There, a smooth function $f(z)=|z|^2$ is considered in a neighborhood $O$ of $y$ with normal coordinates $z=\{z^i\}$ and 
satisfies:
\begin{equation}
C^{-1}d(y,z)^2\le f(z)\le C d(y,z)^2,\quad |\nabla f(z)|\le \widetilde C\sqrt{f(z)}. \label{eq:dist_func}
\end{equation}
In fact $\sqrt{f(z)}=|z|$ is exactly the distance function $d(y,z)$ since $d(x_0,\expo_{x_0}{x})=|x|$
in some normal coordinates (cf. \cite[Proposition 1.27]{berline2003heat}).

In our case where $M$ is an orbifold, we are motivated to define $f(z)$ as the distance function 
(cf. \eqref{eq:key_obser_1})
\[
f(z)=d(y,z)^2:=\min_{\gamma\in G_\alpha}d_{\widetilde W_\alpha}(\widetilde y,\gamma \widetilde z).
\]

To finish the proof, we have the following lemma for $f(z)$:
\begin{lemma}
\label{:near_plemmats}
Given two near points $\widetilde y$, $\widetilde z\in\widetilde W_\alpha$ where $(\widetilde W_\alpha,G_\alpha,\pi_\alpha)$ denoting an orbifold chart of M is so small\footnote{Given any $p_\alpha\in X$ and $\widetilde W_\alpha$ a $G_\alpha$-invariant slice through $p_\alpha$ with $G_\alpha$ the finite isotropy subgroup at $p_\alpha$ (see Appendix \ref{appendix:adaptation} or \cite[p. 173]{duistermaat2013heat}), suppose $h_ix_i=x_i\in\widetilde W_\alpha$ for some sequence $x_i\to p_\alpha$, with $h_i\in\sone\setminus G_\alpha$. Then $\overline{\{h_i\}}\cap G_\alpha$ is non-empty. This is absurd since the union of (finite) isotropy subgroups (in $\sone$) over all $x\in X$ is a finite set by the finiteness of the number of strata.} that at any $\widetilde x\in\widetilde W_\alpha$ the isotropy subgroup $G_{\widetilde x}$ is contained in $G_\alpha$.
\begin{enumerate}
\item[(i)] If the isotropy subgroup $G_0$ of $G_\alpha$ at $\widetilde y$ is not the whole $G_\alpha\,$, then
\[
\min_{\gamma\in G_\alpha}d_{\widetilde W_\alpha}(\widetilde y,\gamma \widetilde z)=d_{\widetilde W_\alpha}(\widetilde y, \widetilde z)<\min_{\gamma\in G_\alpha\setminus G_0} d_{\widetilde W_\alpha}(\widetilde y,\gamma\widetilde z).
\]
\item[(ii)] If $G_0=G_\alpha\,$ then for every $\gamma\in G_\alpha$, $d_{\widetilde W_\alpha}(\widetilde y,\gamma\widetilde z)=d_{\widetilde W_\alpha}(\widetilde y,\widetilde z)$.
\end{enumerate}

\end{lemma}
\noindent\textit{Proof of Theorem \ref{thm:distance_est} continued.} With the above lemma using the orbifold chart $\widetilde W_\alpha$, the smooth function $f(z)=|z|^2$ still serves as the distance function for near points, and \eqref{eq:dist_func} in our orbifold case holds true as well. With \eqref{eq:dist_func} established, the remaining arguments of Hsu go through without change (nonetheless, Hsu's original formulation is not restricted to short time and near points).
\end{proof}

\noindent\textit{Proof of Lemma \ref{:near_plemmats}.} Let us fix $\widetilde y\in\widetilde W_\alpha$. For $\gamma\in G_0(\subset G_\alpha)$ the isotropy subgroup at $\widetilde y$, one sees
\begin{equation}
\label{eq:distance_on_W_tilde}
d_{\widetilde W_\alpha}(\widetilde y,\widetilde z)=d_{\widetilde W_\alpha}(\gamma \widetilde y,\gamma \widetilde z)=d_{\widetilde W_\alpha}(\widetilde y,\gamma\widetilde z).
\end{equation}
The assertion (ii) follows immediately if $G_0=G_\alpha$.

Next assume $G_0\neq G_\alpha$ of assertion (i). Write $ G_0g_1,\, G_0g_2,\,\dotsc $ as (finitely many) cosets of $G_\alpha$ where $g_1=\mathrm{id}$. It follows, as in \eqref{eq:distance_on_W_tilde}, that for every $\gamma_i\in G_0g_i$ $(i=1,2,\cdots)$ $d_{\widetilde W_\alpha}(\widetilde y,\gamma_i \widetilde z)=d_{\widetilde W_\alpha}(\widetilde y,g_i\widetilde z)$,
hence that
\begin{equation}
\label{eq:distance_min_change}
\min_{\gamma\in G_\alpha}d_{\widetilde W_\alpha}(\widetilde y,\gamma\widetilde z)=\min_{g_1,\,g_2,\,\cdots}d_{\widetilde W_\alpha}(\widetilde y,g_i \widetilde z).
\end{equation}

Let $O_i=B_{\delta_i}(g_i\widetilde y)$ be the open ball centered at $g_i\widetilde y$ with radius $\delta_i\ll 1$ and $\overline{O}_i$ the closure of $O_i$ such that these $\overline{O}=\overline{O}_1$, $\overline{O}_2,\cdots$ are pairwise disjoint. Set $\varepsilon_i=\inf_{\widetilde z\in O,\,\widetilde z_i\in O_i}d_{\widetilde W_\alpha}(\widetilde z,\widetilde z_i)> 0\text{ for }i>1$. Given points $\widetilde y$, $\widetilde z$ that are close enough (and thus $\widetilde z\in O$), since $g_i\widetilde z\in O_i$ and $\overline O_i$ is disjoint from $\overline O$ (for $i>1$) we have  $\delta_1+\varepsilon_i\le  d_{\widetilde W_\alpha}(\widetilde y,g_i\widetilde z)\text{ for }i>1$. This and \eqref{eq:distance_min_change} yield $\displaystyle\min_{\gamma\in G_\alpha}d_{\widetilde W_\alpha}(\widetilde y,\gamma \widetilde z)=d_{\widetilde W_\alpha}(\widetilde y, \gamma\widetilde z)\big|_{\gamma=\mathrm{id}}\le \delta_1$ and $d_{\widetilde W_\alpha}(\widetilde y,\widetilde z)<\displaystyle\min_{\gamma\in G_\alpha\setminus G_0}d_{\widetilde W_\alpha}(\widetilde y,\gamma\widetilde z)$ as claimed.
\qed

\begin{rmk}
\label{rmk:Brownian_bridge}
For the reasoning throughout (the orbifold case), the needed semimartingale property (cf. \eqref{eq:F-K_X}) of the horizontal lift (to $X$) of a Brownian bridge can be proved by following a similar argument of \cite[Proposition 5.5.6]{hsu2002stochastic}. The original proof is mainly based on \cite[Lemma 5.4.2]{hsu2002stochastic} (and 
$|\nabla \ln p_M(T-s, X_s, y)|$ in \cite[(5.5.11)]{hsu2002stochastic}). For our present situation (Brownian bridges for near points and short time) we can use Propositions \ref{prop:ker_est_M_lower} and \ref{prop:ker_est_M_upper} (in analogy with Hsu's Corollary 5.3.5 for his proof of Lemma 5.4.2).  For handling the gradient of logarithmic heat kernels, see Remark \ref{gradientlogheatkernel} and Appendix \ref{appendix:distant}.  Hence the result \cite[Proposition 5.5.6]{hsu2002stochastic} is still applicable to our orbifold $M$ (and $X$).
\end{rmk}

\subsection{Estimation of Horizontal Brownian Motion}
\label{subsection:Estimation of Horizontal Brownian Motion}
To justify \eqref{eq:A-S_on_M_0}, the estimation of horizontal Brownian motion and a property of supertrace are needed in the process. For the former, the main estimate is in Proposition \ref{lemma:ut} below.

\begin{rmk}
\label{rmk:111}
(cf. \cite[p. 220]{hsu2002stochastic}) (i) We write the stochastic parallel transport $\widehat X_t^{-1}$ of \eqref{eq:before_exchange} on $G=\mathscr{S}(M)\otimes \xi$ as:
$\widehat X_t^{-1}=(\widehat{X}_t^{-1})^{\mathscr S(M)}\otimes (\widehat X_t^{-1})^{\xi}$ on the respective bundles.  
(ii) Practically $(\widehat{X}_t^{-1})^{\mathscr S(M)}$ as an action on $\mathscr S(M)_x$ is close to the identity so there exists a $v_t^{\mathscr S(M)}\in\mathfrak{so}(n)$ such that $(\widehat X_t^{-1})^{\mathscr S(M)}=\expo (v_t^{\mathscr S(M)})$ in $\mathrm{Spin}(n)$.
\end{rmk}
\begin{prop}
\label{lemma:ut}
In the notation of Remark \ref{rmk:111}, for any positive integer $N$, there is a constant $K_N$ such that $\mathbb{E}_{x,x;t}|\widehat X_t^{\mathscr S(M)}-I^{\mathscr S(M)}|^N\le K_Nt^N$ for small $t$. 
\end{prop}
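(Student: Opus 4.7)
The plan is to adapt Hsu's moment computation \cite[Section 7.6]{hsu2002stochastic} to our orbifold setting. Since $\widehat X_t^{\mathscr S(M)} = \exp(v_t^{\mathscr S(M)})$ with $v_t^{\mathscr S(M)} \in \mathfrak{so}(n)$, and $|\exp(v) - I^{\mathscr S(M)}| \le C|v|$ for $|v|$ small, it suffices to establish $\mathbb{E}_{x,x;t}|v_t^{\mathscr S(M)}|^N \le K_N t^N$. First I would derive a Stratonovich SDE for $v_t^{\mathscr S(M)}$ by writing the horizontal lift $\widehat X_t$ on $\mathscr{SO}(M)$ in the usual form $d\widehat X_t = \sum_i H_i(\widehat X_t)\circ dB_t^i$ and projecting onto the rotation factor relative to the fixed initial frame $\widehat x$. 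The coefficients of the resulting SDE are bounded smooth functions involving the Riemann curvature tensor of $M$, and (crucially) the rotational part of parallel transport along an $M$-valued path records holonomy, so the leading contribution is quadratic in the displacement from $x$ rather than linear.

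Second, I would apply the Burkholder--Davis--Gundy inequality to the stochastic integrals in the representation of $v_t^{\mathscr S(M)}$, and estimate the relevant expectations on the Brownian bridge using Theorem \ref{thm:distance_est}, which yields $\mathbb{E}_{x,x;t} d(X_s,x)^2 \le C\min\{s, t-s\}$. Combined with the curvature boundedness on the compact orbifold, each stochastic integral then contributes $O(t^N)$ after taking the $N$-th moment. A routine Gr\"onwall-type iteration on the Stratonovich SDE absorbs the self-coupling of $v_s^{\mathscr S(M)}$ in the equation and produces the desired inequality.

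The principal obstacle is twofold. First, the conditioned SDE for the Brownian bridge carries a drift with a $1/(t-s)$-type singularity as $s \uparrow t$, so one must combine Theorem \ref{thm:distance_est} with the gradient-of-log-heat-kernel estimate of Theorem \ref{thm:grad_est} (cf. Remark \ref{gradientlogheatkernel} and Remark \ref{rmk:Brownian_bridge}) in order to integrate the drift term against the curvature. Second, one must ensure that the orbifold singularities of $M$ do not disrupt Hsu's stochastic calculus: this is handled by passing to an orbifold chart through $x$ and performing all computations there in the smooth-manifold sense, invoking the chart-adaptations already prepared in Appendices \ref{appendix:L_diffusion} and \ref{appendix:adaptation}. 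Once both points are addressed the estimate is purely mechanical.
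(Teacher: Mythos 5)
Your proposal is correct and follows essentially the same route as the paper: the paper's proof likewise reduces the estimate to Hsu's argument for \cite[Lemma 7.3.4]{hsu2002stochastic}, using the SDE for the rotation part of the horizontal Brownian bridge (whose Girsanov drift is exactly your $\nabla\log p$ term with the $1/(t-s)$ singularity), controlled via Theorems \ref{thm:grad_est} and \ref{thm:distance_est} and Remark \ref{rmk:Brownian_bridge}, with the orbifold issues dispatched by working in charts as in Appendices \ref{appendix:L_diffusion} and \ref{appendix:adaptation}. You merely spell out the local computations (normal coordinates, Burkholder--Davis--Gundy, Gr\"onwall) that the paper leaves to the citation, so there is no substantive difference in approach.
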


\begin{proof}
We only need the small $t$ behavior of the horizontal Brownian motion $\widehat X_t^{\mathscr S(M)}$.  By Remark \ref{rmk:Brownian_bridge}, Theorems \ref{thm:grad_est} and \ref{thm:distance_est} of the last subsection, the argument of \cite[Lemma 7.3.4]{hsu2002stochastic} can be applied to our orbifold $M$. Another key is the SDE for $\widehat X_t^{\mathscr S(M)}$ (cf. \cite[Section 7.7 and Theorem 5.4.4]{hsu2002stochastic}). Such SDE is basically derived from Girsanov's theorem, which is fortunately also applicable to our situation (cf. Remark \ref{rmk:Brownian_bridge} above). The remaining proof follows from local computations (cf. \cite[Lemma 7.3.4]{hsu2002stochastic}).
\end{proof}

We turn now to the second property -- supertrace as mentioned above.

\begin{lemma} (cf. \cite[Lemma 7.4.3]{hsu2002stochastic})
\label{cor: str_est}
Let $A_1,\cdots,A_\ell\in\mathfrak{so}(n=2\ell)$. If $k<\ell$, then $\str (D^\ast A_1\circ \cdots \circ D^\ast A_k)=0$, where the action $D^\ast A:\Laplace^\pm\to\Laplace^\pm$ (cf. Subsection \ref{subsection:Notations and Set Up}) of $A=(a_{ij})\in\mathfrak{so}(n)$ on $C(\mathbb{R}^n)$ is given by the Clifford multiplication on the left by ${1}/{4}\sum_{1\le i,j\le n}a_{ij}e_i e_j$.
\end{lemma}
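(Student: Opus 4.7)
The plan is to exploit the standard fact that on the (complex) spinor representation $\Laplace = \Laplace^+ \oplus \Laplace^-$ of $C(\mathbb{R}^n)$ with $n = 2\ell$, the supertrace of Clifford left-multiplication by an element $c \in C(\mathbb{R}^n)$ vanishes unless $c$ has a nonzero component along the top Clifford monomial $e_1 e_2 \cdots e_n$. More precisely, writing $c$ in the canonical basis $\{e_{i_1} e_{i_2} \cdots e_{i_r} : i_1 < \cdots < i_r\}$ of $C(\mathbb{R}^n)$, one has $\str(c) = \kappa_\ell \cdot c_{\mathrm{top}}$, where $c_{\mathrm{top}}$ is the coefficient of $e_1 \cdots e_n$ and $\kappa_\ell$ is a nonzero universal constant (e.g.\ $\kappa_\ell = (-2i)^\ell$ in one common normalization). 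I would state this as the starting point, referring for instance to standard references such as \cite{berline2003heat} or \cite[Sec.~7.4]{hsu2002stochastic}.

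Next, I would observe that each operator $D^\ast A_j$ acts by Clifford left-multiplication by the element
\[
\alpha_j \coloneqq \frac{1}{4}\sum_{1 \le i,k \le n} (A_j)_{ik}\, e_i e_k \;\in\; C(\mathbb{R}^n),
\]
which sits in the filtration piece $C^{\le 2}(\mathbb{R}^n)$ of the Clifford algebra (recall the filtration $C^{\le r}$ whose associated graded is the exterior algebra). Since the Clifford filtration is multiplicative, $C^{\le r} \cdot C^{\le s} \subset C^{\le r+s}$, the product $\alpha_1 \alpha_2 \cdots \alpha_k$ lies in $C^{\le 2k}(\mathbb{R}^n)$, i.e.\ it is a linear combination of basis monomials $e_{i_1} \cdots e_{i_r}$ with $r \le 2k$.

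Under the assumption $k < \ell$, we have $2k < 2\ell = n$, so every monomial appearing in the expansion of $\alpha_1 \cdots \alpha_k$ has length strictly less than $n$. In particular the coefficient of the top monomial $e_1 \cdots e_n$ is zero, and therefore by the supertrace principle recalled above,
\[
\str(D^\ast A_1 \circ \cdots \circ D^\ast A_k) \;=\; \str(\alpha_1 \alpha_2 \cdots \alpha_k) \;=\; 0,
\]
which is the claim. The only place requiring care is the identification of the supertrace with the top-coefficient functional; this is purely algebraic and has nothing to do with our orbifold/transversal setting, so it can simply be quoted. The rest is an immediate filtration/degree count, and no serious obstacle is expected.
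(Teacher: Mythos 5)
Your argument is correct and is essentially the standard one: the paper itself offers no proof, citing Hsu's Lemma 7.4.3, whose proof is exactly this combination of the filtration bound $\alpha_1\cdots\alpha_k\in C^{\le 2k}(\mathbb{R}^n)$ with the fact that the supertrace on the spinor module $\Laplace^+\oplus\Laplace^-$ vanishes on all Clifford monomials except the top one $e_1\cdots e_n$. Since $2k<2\ell=n$ kills the top coefficient, the conclusion $\str(D^\ast A_1\circ\cdots\circ D^\ast A_k)=0$ follows just as you state.
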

\begin{rmk}
\label{rmk:exterior_algebra}
The proof of the above lemma employs the following identity which is needed again in the next subsection (cf. \eqref{eq:Ut_expansion}) $\exp tA(x)=\sum_{k=0}^\infty ({t^k}/{k!})D^\ast A(x)^k$.
\end{rmk}
\begin{rmk}
\label{cor:Clifford_prop}
The Clifford multiplication $c(X_i)c(X_j)$ (in \eqref{eq:Lichnerowicz_lifted}) can be identified as $2\,\widehat x\circ D^\ast A_{ij}\circ \widehat x^{-1}$ for some $A_{ij}\in\mathfrak{so}(n)$.  See \eqref{eq:str_zero} for use.
\end{rmk}

\subsection{Interchangeability in \eqref{assump:interchange}}
Despite that the idea of our arguments parallels to some extent that of Hsu \cite[Theorems 7.3.5 and 7.6.2]{hsu2002stochastic}, we must verify the interchangeability of our own. For that reason and the sake of clarity, we choose to write down the details
and adapt them to orbifolds  (cf. Appendix \ref{appendix:L_diffusion}).  

For the interchangeability one may ask for the uniform boundedness of the integrand in the RHS of \eqref{eq:before_exchange}.  Recall the notation $\widehat{X}_t^{-1}$, $(\widehat{X}_t^{-1})^{\mathscr{S}(M)}$ 
as in Remark \ref{rmk:111}.
We continue with \eqref{eq:A-S_on_M_0}, and set 
\begin{equation}
\begin{split}
\label{eq:index_on_M}
I(t,x)&=\mathbb{E}_{x,x;t}\left[ R_t\str\left( M_t\widehat X^{-1}_t\right)\right]{p_M(t,x,x)}\\
\comment{&=\frac{1}{2\pi}\mathbb{E}_{x,x;t}\left[ R_t\str\left( M_t\widehat X^{-1}_t\right)\right]p_M(t,x,x)\\
&=\frac{1}{2\pi}\mathbb{E}_{x,x;t}\left[ R_t\str\left( M_t\big( {(\widehat{X}_t^{-1})}^{\mathscr{S}(M)} \otimes {(\widehat{X}_t^{-1})}^\xi\big)\right)\right]p_M(t,x,x)}
\end{split}
\end{equation}
for $x\in M$.   One sees that it is enough to show the uniform boundedness of $I(t, x)$.  

One first expands $M_t$ in \eqref{eq:index_on_M} into series (cf. Subsection \ref{subsection:Prob}): $M_t=\sum_{i=0}^\ell m_i(t)+Q(t)$, where $m_i(t)= -\frac{1}{4}\int_0^t m_{i-1}(s)\sum_{j,k=1}^n c(e_j)c(e_k)\otimes\widehat X_s L(\widehat X_s e_j,\widehat X_s e_k)\widehat X_s^{-1}\mathrm ds,\; m_0=I$ and $|Q(t)|\le C_1 t^{\ell+1}$.

Next, to expand $(\widehat X_t^{-1})^{\mathscr S(M)}$ (in \eqref{eq:index_on_M}) into series, one has (see Remark \ref{rmk:exterior_algebra}),
\begin{equation}
\label{eq:Ut_expansion}
(\widehat X_t^{-1})^{\mathscr S(M)}=\expo(v_t^{\mathscr S(M)}) =\sum_{k=0}^\ell \frac{\big(D^\ast v_t^{\mathscr S(M)}\big)^k}{k!} +R(t),\quad 0<t\ll 1,
\end{equation}
for some $v_t^{\mathscr S(M)}\in\mathfrak{so}(n)$ such that $(\widehat X_t^{-1})^{\mathscr S(M)}=\expo(v_t^{\mathscr S(M)})$ in ${\mathrm{Spin}}(n)$. By using Proposition \ref{lemma:ut} and Remark \ref{rmk:exterior_algebra} the remainder $R(t)$ satisfies $\mathbb{E}_{x,x;t}|R(t)|\le C_2 t^{\ell+1}$.

Now, $M_t (\widehat X_t^{-1})^{\mathscr S(M)}$ in \eqref{eq:index_on_M} can be computed via \eqref{eq:Ut_expansion}:
\begin{equation}
\label{eq:MtUt}
M_t(\widehat X_t^{-1})^{\mathscr S(M)}=\sum_{i,j\le \ell}\frac{m_i(t)(D^\ast v_t^{\mathscr S(M)})^j}{j!} +S(t),
\end{equation}
where the remainder $S(t)$ satisfies $\mathbb{E}_{x,x;t} |S(t)|\le C_3t^{\ell +1}$. By Remark \ref{cor:Clifford_prop} and Lemma \ref{cor: str_est}, one sees
\begin{equation}
\label{eq:str_zero}
\str\big( m_i(t)(D^\ast v_t^{\mathscr S(M)})^j\big)=0\text{ if } i+j<\ell.
\end{equation}
This immediately leads \eqref{eq:MtUt} to 
\begin{equation}
\label{eq:MtUt_str}
\str\big(M_t(\widehat X_t^{-1})^{\mathscr S(M)}\big)=\str\Big( \sum_{i+j\ge \ell}\frac{m_i(t)(D^\ast v_t^{\mathscr S(M)})^j}{j!} +S(t)\Big).
\end{equation}

By $\mathbb{E}_{x,x;t}|m_i(t)(D^\ast v_t^{\mathscr S(M)})^j|\le C_4 t^{i+j}$, $\mathbb{E}_{x,x;t}|S(t)|\le C_3t^{\ell+1}$ and applying Propositions \ref{prop:ker_est_M_lower} and \ref{prop:ker_est_M_upper} (with $n/2=\ell$ here), one sees that as $t\downarrow 0^+$,
\begin{equation}
\label{eq:boundedness}
\mathbb{E}_{x,x;t}\big(\str \big(M_t (\widehat X_t^{-1})^{\mathscr S(M)}\big)\big)p_M(t,x,x) \text{ is locally uniformly bounded in }x,
\end{equation}
leading to the interchangeability as asserted in this subsection, cf. Remark \ref{rmk_explain}. 

For the precise evaluation of \eqref{eq:index_on_M} we remark the following.

\begin{rmk}
\label{rmk:computation of index}
By using $\widehat{X}_t^{-1}={(\widehat{X}_t^{-1})}^{\mathscr{S}(M)} \otimes {(\widehat{X}_t^{-1})}^\xi$ (Remark \ref{rmk:111}), the computation of \eqref{eq:index_on_M} now boils down to
\begin{align}
&\lim_{t\downarrow 0^+}\mathbb{E}_{x,x;t}\bigg[R_t\str\big(M_t\widehat X^{-1}_t\big)\bigg]p_M(t,x,x)\nonumber\\
=&\lim_{t\downarrow 0^+}\mathbb{E}_{x,x;t}\bigg[R_t\str\Big(\sum_{i+j= \ell}\frac{m_i(t)\big((D^\ast v_t^{\mathscr S(M)})^j\otimes (\widehat X_t^{-1})^{\xi}\big)}{j!}\Big)\bigg]p_M(t,x,x).
\label{rmk:degen_case}
\end{align}

For those $x$ not in the singular locus of the orbifold $M$, \eqref{rmk:degen_case} is exactly bringing us back to $I(x)$ in \cite[the fifth line in p. 221]{hsu2002stochastic} (since $p_M(t,x,x)\sim\left({1}/{2\pi t}\right)^{n/2}$ by Remark \ref{rmk:x_0} for $p=1$ here).  The remaining arguments are essentially local in nature (see also \cite[Subsection 7.7, 
p. 222]{hsu2002stochastic}) and are omitted.
\end{rmk}
\begin{rmk}
\label{rmk_explain}
For \eqref{eq:boundedness} we remark that in the entire Section \ref{sec: est_HBM} including this subsection, we are not restricting ourselves to the nonsingular part (i.e. the principal stratum) of the orbifold $M$, until Remark \ref{rmk:computation of index} (whose purpose concerns the explicit computation of \eqref{rmk:degen_case} away from the singular locus).
\end{rmk}

Finally, our consideration in the present transversal context refers only to the $\sone$-invariant part $\Omega_0(G)$ of $\Omega(G)$ (cf. \eqref{eq:Dirac_composition_new}, Definitions \ref{def:Fourier_component_G} and \ref{def:index}). In fact, a formulation suitable to work for the $m$-th ($m\in\mathbb{Z}$) Fourier components $\Omega_m(G)$ exists by analogy. We omit the details here. See \cite[Corollary 1.13]{cheng2015heat} for a related formalism in the CR context. We content ourselves with the remark that for $m\in\mathbb{Z}$ the relevant local index density $I_m$ shall involve an additional term $p\,\delta_{p\mid m}\,e^{-m\frac{\mathrm d\omega_0}{2\pi}}$; namely
\[
I_m=\frac{1}{2\pi}p\,\delta_{p\mid m}\widehat{A}(\mathcal H)\wedge \ch\,\xi\wedge e^{-m\frac{\mathrm d\omega_0}{2\pi}}\wedge\omega_0
\]
(cf. \eqref{main} of Theorem \ref{thm:main_result}). Here $\delta_{p\mid m}=1$ if $p\mid m$ and $\delta_{p\mid m}=0$ if $p\nmid m$, and $p=\min_{u\in X}|H_u|$. See \cite[Corollary 1.13]{cheng2015heat} for analogy in this regard.

\bigskip

\bigskip

\appendix
\numberwithin{equation}{section}
\numberwithin{thm}{section}

\section{Construction of {$L$}-diffusion Measure on Orbifolds}
\label{appendix:L_diffusion}
The existence and the uniqueness (\cite[Theorems 1.3.4 and 1.3.6]{hsu2002stochastic}) of $L$-diffusion measures (cf.  \cite[Definition 1.3.1]{hsu2002stochastic}\footnote{$L$-diffusion can also be given as a solution to the martingale problem for $L$, cf. \cite[Proposition 3.2.1]{hsu2002stochastic}.}) are discussed through embedding the manifold into a Euclidean space.  This extrinsic approach cannot be directly applied to our orbifold case due to singularities. An alternative way is to construct the $L$-diffusion measure intrinsically. In fact ``... experience indicates that there are benefits to be gained from forcing oneself to work intrinsically. In particular, an intrinsic approach often reveals structure which is masked when one relies too heavily on extrinsic considerations ...'', said Stroock \cite[p. 165]{stroock2000introduction}.
Indeed \cite[Theorem 8.62]{stroock2000introduction} shows that for a connected, complete, separable $n$-dimensional Riemannian manifold $M$, if there exists an $\alpha\in\mathbb{N}$ such that for any fixed point $x_0\in M$, $\langle \mathrm{Ric}_x X_x,X_x\rangle\ge -{\alpha^2}/{n}(1+\mathrm{dist}(x,x_0)^2)\|X_x\|^2,\; \forall x\in M\text{ and } X_x\in  T_xM$, then the martingale problem for $\Laplace_{\mathscr{O}(M)}/2$ on $\mathscr{O}(M)$ (which is the  horizontal Laplacian $\Laplace_B/2$ in \cite{stroock2000introduction}) is well-posed; further the martingale problem for $\Laplace_M/2$ on the manifold $M$ is also well-posed. We can borrow this argument since the orthonormal frame bundle of our orbifold $M$ is smooth and compact.  For going down to the orbifold $M$, see e.g. \cite[Chapter 20.5, p. 589)]{del2017stochastic} for a special orbifold; nevertheless the arguments there basically apply here too (cf. Appendix \ref{appendix:adaptation}).

\section{Adaptation from Manifolds to Orbifolds}
\label{appendix:adaptation}

For the orbifold $M$ endowed with a Riemannian metric $g$ (see e.g. \cite[Proposition 2.20]{moerdijk2003introduction}
or \cite{wolak2015orbifolds}), the Laplacian $\Laplace:C^\infty(M)\to C^\infty(M)$ on $M$ is defined as follows (see \cite[Subsection 3.1]{gordon2012orbifolds}). Let $(\widetilde U,G,U,\pi)$ be any orbifold chart (with $\pi:\widetilde U\to\widetilde U/G\cong U\subset M$) with associated ($G$-invariant) Riemannian metric $g_{\widetilde U}$ and the associated Laplacian $\Laplace_{\widetilde U}$. For $f\in C^\infty(U)$ the function $\Laplace_{\widetilde U}(f\circ\pi)$ on $\widetilde U$ is $G_U$-invariant since $G$ acts isometrically on $\widetilde U$. Thus $\Laplace f$ is defined by the condition
\[
(\Laplace f)\circ \pi=\Laplace_{\widetilde U}(f\circ \pi).
\]
Other geometric operations (e.g. covariant derivatives in Theorem \ref{thm:grad_est}) can be defined on orbifolds in a similar fashion.

Secondly, given an orbifold chart $(\widetilde U,G,U,\pi)$ on $M=X/\sone$ it is true that
\begin{equation}
\label{eq:A1}
\widetilde U\times\sone/G \cong \pi^{-1}(U)\subset X,
\end{equation}
where $G$ acts diagonally and freely. One thinks of $X$ as an orbifold principal $\sone$-bundle on $M$ due to the $G$-action on $\widetilde U\times \sone$. As mentioned in \eqref{eq:Lplace^H_X}, this facilitates the definition of the transversal Laplacian $\Laplace_X^H$ (on $X$) in a way similar to that on the ordinary principal frame bundle.

To see \eqref{eq:A1}, fix $p\in X$ and a small slice $W=W_p\subset X$ transversal to the $\sone$-orbit of $p$. Here $W$ is the same as the smooth $G_y$-invariant manifold $V$ in the last paragraph of \cite[p. 173]{duistermaat2013heat}, where $G_y\subset\sone$ is the (finite) isotropy subgroup of $y(=p\text{ here})$. Then $W$ gives rise to an (smooth) orbifold chart $(\widetilde U,G,U,\pi)$ with $\widetilde U/G=U$ where $W$ is rewritten as $\widetilde U$ (\cite[p. 173]{duistermaat2013heat}). The mapping (where $\pi:X\to X/\sone$) $f:\widetilde U\times \sone\rightarrow \pi^{-1}(U)$ by $(w,s)\mapsto s^{-1}w$ induces the map $\overline{f}:\widetilde U\times\sone/G\to \pi^{-1}(U)$. Suppose that $f(w_1,s_1)=f(w_2,s_2)$ then $[w_1]=[w_2]$ in $M=X/\sone$. Back to the orbifold chart $\widetilde U(\subset X)$ on $M$, $w_1$ and $w_2$ shall be connected by an element $h\in G$ such that $s_1s_2^{-1}=h$. This yields $[(w_1,s_1)]=[(w_2,s_2)]$ in $\widetilde U\times\sone/G$ since $h\in G$, hence that $\overline{f}:\widetilde U\times\sone/G\to \pi^{-1}(U)$ is injective. Since $f$ is clearly surjective, $\overline f$ is thus an isomorphism, proving \eqref{eq:A1}.

\section{Transversally Spin Structure}
\label{appendix:spin}
In this work, the odd-dimensional manifold $X$ (of dimension $n+1$ with an $\sone$-invariant metric) is required to be orientable, and the horizontal part $\mathcal H\subset TX$ (i.e. the space of those tangents orthogonal to the $\sone$-orbits) inherits an orientation. For $n\ge 3$, a spin structure on $\mathcal H$ over $X$ is a principal $\mathrm{Spin}(n)$-bundle $\mathscr{SP}(\mathcal H)$ together with a two-sheeted covering $\widetilde\sigma:\mathscr{SP}(\mathcal H)\longrightarrow \mathscr{SO}(\mathcal H)$ such that for all $p\in \mathscr{SP}(\mathcal H)$ and all $g\in\mathrm{Spin}(n)$, $\widetilde\sigma(pg)=\widetilde\sigma(p)\widetilde\sigma_O(g)$ where $\widetilde\sigma_O:\mathrm{Spin}(n)\to\mathrm{SO}(n)$ is the universal covering homomorphism, cf. \cite[p. 80]{lawson2016spin}. The necessary and sufficient condition to have a spin structure on $\mathcal H$ is the vanishing of the second Stiefel-Whitney class $w_2(\mathcal H)$ of $\mathcal H$ (\cite[p. 79]{lawson2016spin}).

For a spin manifold $Z$, $\sone$-actions on $Z$ can be categorized into two types: odd type and even type. We say that an $\sone$-action on $Z$ is of even type if it lifts to an (compatible) action on $\mathscr{SP}(Z)$ and is of odd type otherwise (see \cite[p. 295]{lawson2016spin}). We extend this definition to the transversally spin manifold $X$ and require that the $\sone$-action on $X$ is of transversally even type (namely the $\sone$-action on X lifts to an action on $\mathscr{SP}(\mathcal H)$).

\section{lower bound and gradient estimate on logarithmic heat kernel}
\label{appendix:distant}
As indicated previously we want to show for distant points a similar lower bound estimate in Proposition \ref{prop:ker_est_M_lower} 
(upper bounds for them, see our Proposition \ref{prop:ker_est_M_upper}); it is connected to gradient estimate on the logarithmic heat kernel (cf.  Remark \ref{gradientlogheatkernel}).
Such a result on manifolds is treated in \cite{hsu2002stochastic}.   Here we want it 
for orbifolds, of which the proof below is much shorter and more in line with \cite{chavel1984eigenvalues} in place of some probabilistic arguments of \cite{hsu2002stochastic} (which might require extra work for adaptation to orbifolds).  


Retaining the notation of Proposition \ref{prop:ker_est_M_lower} assume that $y$ is not near to $x$. 
Suppose the special case where there exists a point $w\in M$ with a $\delta$-ball $B_{\delta}(w)$ such that 
$d(x, w)+d(w, y)=d(x, y)$ and for every $z\in B_{\delta}(w)$, $z$ is near to $y$ and to $x$ in the sense of Proposition \ref{prop:ker_est_M_lower}.  
By the semigroup property $p_M(t+s, x, y)=\int_M p(t, x, z)p_M(s, z, y)dz$ and the positivity of $p_M$ we have, 
via Proposition \ref{prop:ker_est_M_lower} for near points, 
\begin{equation}
\label{semigroup}
\begin{split}
p_M(t, x, y)&\ge \int_{B_{\delta}(w)}p_M(\frac{t}{2}, x, z)p_M(\frac{t}{2}, z, y)dz\\
&\ge C_1^2 \int_{B_{\delta}(w)}e^{-(\frac{d^2(x, z)+d^2(z, y)}{2\cdot t/2})}/(\pi t)^n dz\\
&= C_1^2 {e^{-\frac{d^2(x, y)}{2t}}} \int_{B_{\delta}(w)} e^{-\frac{f(z)}{2t}} dz/(\pi t)^n
\end{split}
\end{equation}
where $f(z) = 2d^2(x, z) + 2d^2(z, y) -d^2(x, y)$.  It is easily seen that $f(z)\ge 0$ with the minimum $f(w)=0$ by our choice of 
$w$.   Hence (by the smoothness of $f(z)$ due to the near-point assumption) 
$-f(z)\ge -C_2|z|^2$ for some constant $C_2>0$ in $B_{\delta}(w)$ (cf. Lemma \ref{:near_plemmats} and lines 
above for the orbifold distance).   Upon rescaling $z \to \sqrt{2t/C_2}z$ the preceding integral over $B_{\delta}$ is of order 
$(\delta\sqrt{2t/C_2})^n$ (where $n={\rm dim}\, M$).  This, together with \eqref{semigroup},  
yields the desired lower bound estimate for this special case.   Remark that the above $f$ is similar to the one appearing in 
the recent work \cite[Corollary 1]{barilari2012small} called {\sl hinged energy function} there.  

The strategy for a proof of the general case is not difficult.    Given $x, y \in M$, choose a geodesic $\gamma:[0, \ell]\to 
M$ connecting $x$ and $y$ with $\ell= d(x, y)$ (by an orbifold version of 
Hopf-Rinow Theorem, cf. \cite[p. 35]{bridson1999metric}).   Let $N>0$ and $\delta>0$ be such that by setting 
the points $w_i = \gamma(i\ell/N)$ ($i=0, 1, 2, \cdots, N$) with $w_0=x$ and $w_{N}=y$,  every point $z_i\in 
B_{\delta}(w_i)$ is near to every point $z_{i-1}\in B_{\delta}(w_{i-1})$ and $z_{i+1}\in B_{\delta}(w_{i+1})$
($i=1, 2, \cdots, N-1$).  The compactness of $M$ ensures that the choice of $N$, $\delta$ can be made independently of $x$ and $y$.  Now the same pattern of reasoning as the preceding special case for a similar integration,  
over intermediate variables $z_1, z_2, \cdots, z_{N-1}$ in the $\delta$-balls (via the semigroup 
property), of $e^{-f/2t}$ where $f=f(z_1, z_2, \cdots, z_{N})\ge 0$ is smooth with the minimal value $f(w_1, w_2, \cdots, w_{N-1})=0$ (i.e. $f=N(d^2(x, z_1)+d^2(z_1, z_2)+\cdots+d^2(z_{N-1}, y))-d^2(x, y)$), leads to the desired lower bound estimate for the general case of distant points.   The details are omitted.

\section{Some Results in Probability}
\label{subsection:Some Results in Probability}
For the geometrically minded reader, some basic results in probability, including Brownian motion, It\^o's formula, martingale and Feynman-Kac formula are briefly introduced here. For the sake of simplicity, we discuss them on $\mathbb{R}^n$; they can be generalized on manifolds. The main reference is \cite{klebaner2005introduction}.

\begin{defn}(cf. \cite[p. 56]{klebaner2005introduction})
\label{def:Brownian_motion}
Brownian motion $\{B(t)\}_{t\ge 0}$ in $\mathbb{R}$ is a stochastic process with the following properties:
\begin{enumerate}
\item[(i)](independence of increments) $B(t)-B(s)$ for $t>s$ is independent of the past,
\item[(ii)](normal increments) $B(t)-B(s)$ has normal distribution with mean $0$ and variance $|t-s|$ (denoted by $\mathcal N(0,|t-s|)$),
\item[(iii)] (continuity of paths) $B(t)$ are continuous functions of $t(\ge 0)$.
\end{enumerate}
\end{defn}

\begin{rmk}
Brownian motion $B(t)$ can be extended as a pair $\{ B(t),\mathcal F_t\}_{t\ge 0}$ where $\mathcal F_t$ is a filtration and $B(t)$ is an adapted process; see \cite[Remark 3.1]{klebaner2005introduction} for more. Brownian motion in high dimension, say $\mathbb{R}^n$, is defined as   $B(t)=\big(B^1(t),\dotsc ,B^n(t)\big)$ where $B^1,\cdots,B^n$ are independent, one-dimensional Brownian motions as just defined.
\end{rmk}

It follows that the conditional distribution of $B(t+s)$ is $\mathcal N(x,t)$ if $B(s)=x$. The transition function $p(y,t;x,s)$ is then given by
\[\begin{split}
p(y,t;x,s)&=\mathbb{P}\big(B(t+s)\le y|B(s)=x\big)\\
&=\mathbb{P}\big(B(t)\le y|B(0)=x\big)\text{ (also denoted as }\mathbb{P}_x(B(t)\le y))\\
&=\int_{-\infty}^y p_t(x,y_1)\mathrm dy_1\text{ where }p_t(x,y)=\frac{1}{\sqrt{2\pi t}}e^{-\frac{(y-x)^2}{2t}}.
\end{split}\]
This idea, together with the tower property $\mathbb{E}[X]=\mathbb{E}\left[\mathbb{E}[X|Y]\right]$, reappears in \eqref{eq:kernel computation_expectation2} for expressing the relation between heat kernels $p_X^D$ and $p_X$.

Unlike continuous deterministic functions, the quadratic variation of Brownian motion $B(t)$ is non-zero. More precisely,
\begin{defn}(Quadratic variation, cf. \cite[(3.8)]{klebaner2005introduction})
The quadratic variation of Brownian motion $B(t)$ is defined as $[B,B](t)=\lim\sum_{i=1}^n|B(t_i^n)-B(t_{i-1}^n)|^2$ where the limit is taken over all shrinking partition of $[0,t]$ with $\delta_n\coloneqq \max_{i}(t_{i}^n-t_{i-1}^n)\to 0$ as $n\to\infty$.

\begin{thm}(cf. \cite[Theorem 3.4]{klebaner2005introduction})
We have $[B,B](t)=t$.
\end{thm}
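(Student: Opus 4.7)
The plan is to compute the $L^2$-limit of the sums $S_n := \sum_{i=1}^{n}|B(t_i^n)-B(t_{i-1}^n)|^2$ as the mesh $\delta_n = \max_i(t_i^n - t_{i-1}^n)$ goes to $0$, and show that this limit equals $t$. First I would verify that $\mathbb{E}[S_n] = t$ for every partition: using the fact that $B(t_i^n)-B(t_{i-1}^n) \sim \mathcal{N}(0, t_i^n - t_{i-1}^n)$ from Definition \ref{def:Brownian_motion}(ii), each summand has expectation $t_i^n - t_{i-1}^n$, so the sum telescopes to $t$.

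Next I would estimate $\mathrm{Var}(S_n)$. By the independence of increments (Definition \ref{def:Brownian_motion}(i)), the increments $B(t_i^n)-B(t_{i-1}^n)$ are independent across $i$, so their squares are too, giving
\[
\mathrm{Var}(S_n) = \sum_{i=1}^{n} \mathrm{Var}\bigl(|B(t_i^n)-B(t_{i-1}^n)|^2\bigr).
\]
For a standard normal random variable $Z \sim \mathcal{N}(0,\sigma^2)$, one has $\mathbb{E}[Z^4] = 3\sigma^4$ and hence $\mathrm{Var}(Z^2) = 2\sigma^4$. Applying this to each increment yields $\mathrm{Var}(S_n) = 2\sum_{i=1}^{n}(t_i^n - t_{i-1}^n)^2 \le 2\delta_n\sum_i(t_i^n-t_{i-1}^n) = 2t\,\delta_n$, which tends to $0$ as $\delta_n \to 0$.

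Combining these two steps, $\mathbb{E}[(S_n - t)^2] = \mathrm{Var}(S_n) \to 0$, so $S_n \to t$ in $L^2$ and therefore in probability as the partition mesh shrinks. This gives $[B,B](t) = t$ in the sense of Definition \ref{def:Brownian_motion} above (quadratic variation defined as an $L^2$-limit along partitions). The main technical point is the moment computation $\mathrm{Var}(Z^2)=2\sigma^4$ for Gaussian $Z$; once that is in place, independence of increments does essentially all the rest of the work, and there is no genuine obstacle. If one insists on almost-sure convergence rather than $L^2$-convergence, an additional appeal (e.g. Borel--Cantelli) along a sufficiently fast-shrinking subsequence of partitions would suffice, but this refinement is not required for the statement as given.
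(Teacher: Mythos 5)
Your proof is correct: the mean computation, the use of independent increments, the Gaussian fourth-moment identity giving $\mathrm{Var}(Z^2)=2\sigma^4$, and the bound $\mathrm{Var}(S_n)\le 2t\,\delta_n\to 0$ together yield $L^2$ (hence in-probability) convergence of $S_n$ to $t$, which is what the statement asserts. The paper itself offers no proof but simply cites Klebaner's Theorem 3.4, and your argument is essentially the standard one given there, including the correct remark that almost-sure convergence would require a refining or summable-mesh subsequence via Borel--Cantelli.
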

\end{defn}


Next, we turn to martingale and the zero mean property of It\^o integral.

\begin{defn}(Martingale, cf. \cite[Definition 3.6, Remark 3.3]{klebaner2005introduction})
A stochastic process $\{X(t\}_{t\ge 0})$ is a martingale if for any $t$, $\mathbb{E}|X(t)|<\infty$ and for any $s>0$, $\mathbb{E}\big[X(t+s)|\mathcal F_s\big]=X(t)\,\text{ a.s.}$ where $\mathcal F_t=\sigma(X(u):0\le u\le t)$ the $\sigma$-field generated by the values of the process up to time $t$.
\end{defn}
\begin{lemma} (cf. \cite[Theorem 3.7]{klebaner2005introduction})
Brownian motion $B(t)$ is a martingale.
\end{lemma}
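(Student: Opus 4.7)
The plan is to verify the two defining conditions of a martingale for $B(t)$, namely the $L^{1}$-integrability $\mathbb{E}|B(t)|<\infty$ and the conditional expectation identity $\mathbb{E}[B(t+s)\mid\mathcal{F}_{s}]=B(s)$, using only the three properties of Brownian motion listed in Definition \ref{def:Brownian_motion}.

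For the integrability, I would apply property (ii) with the lower endpoint $0$, together with the usual convention $B(0)=0$, to conclude $B(t)\sim\mathcal{N}(0,t)$. An elementary Gaussian computation then gives $\mathbb{E}|B(t)|=\sqrt{2t/\pi}<\infty$, which settles the integrability condition.

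For the conditional expectation, I would start from the decomposition
\[
B(t+s) = \big(B(t+s)-B(s)\big) + B(s).
\]
The increment $B(t+s)-B(s)$ is independent of $\mathcal{F}_{s}$ by property (i), and its unconditional mean is $0$ by property (ii); hence $\mathbb{E}[B(t+s)-B(s)\mid\mathcal{F}_{s}]=\mathbb{E}[B(t+s)-B(s)]=0$. On the other hand, $B(s)$ is $\mathcal{F}_{s}$-measurable, so $\mathbb{E}[B(s)\mid\mathcal{F}_{s}]=B(s)$. Summing the two identities yields $\mathbb{E}[B(t+s)\mid\mathcal{F}_{s}]=B(s)$ almost surely, as required.

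There is no genuine obstacle here: the result is essentially a direct reading of the defining properties of Brownian motion. The only mild technical point is to pin down the filtration $\mathcal{F}_{t}=\sigma(B(u):0\le u\le t)$ so that the phrase ``independent of the past'' in property (i) is read as independence from $\mathcal{F}_{s}$; this is the natural-filtration convention already recorded in the remark immediately following Definition \ref{def:Brownian_motion}.
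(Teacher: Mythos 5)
Your proof is correct and is exactly the standard argument behind the cited result (Klebaner, Theorem 3.7), which the paper itself does not reprove: Gaussian integrability of $B(t)$ together with the decomposition $B(t+s)=\big(B(t+s)-B(s)\big)+B(s)$, independence of increments, and measurability of $B(s)$. The only point worth flagging is that the paper's definition of a martingale writes $\mathbb{E}\big[X(t+s)\mid\mathcal{F}_s\big]=X(t)$, evidently a typo for $X(s)$; the identity you establish, $\mathbb{E}\big[B(t+s)\mid\mathcal{F}_s\big]=B(s)$, is the correct martingale property.
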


In what follows we are going to introduce the classical Feynman-Kac formula, of which the one that Hsu has used in \cite{hsu2002stochastic} is an adaptation.

\begin{thm}(Zero mean property, cf. \cite[Theorem 4.3]{klebaner2005introduction})
Let $X(t)$ be a regular adapted process such that $\int_0^T X^2(t)\mathrm dt<\infty$ with probability one. If $\int_0^T\mathbb{E}[X^2(t)]\mathrm dt<\infty$ holds for It\^o integral $\int_0^T X(t)\mathrm dB(t)$, then $\mathbb{E}\left[\int_0^T X(t)\mathrm dB(t)\right]=0$.
\end{thm}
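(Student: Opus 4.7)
The plan is to follow the standard two-step construction of the It\^o integral, first establishing the zero-mean property on simple adapted processes where it is essentially a direct computation, and then extending to the general integrand by an $L^2$-density argument using the It\^o isometry.

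First, I would handle simple (elementary) processes. For a bounded, adapted $X$ of the form $X(t)=\sum_{i=0}^{N-1}\xi_i\,\mathbf{1}_{[t_i,t_{i+1})}(t)$ with $0=t_0<t_1<\cdots<t_N=T$ and each $\xi_i$ bounded and $\mathcal F_{t_i}$-measurable, the It\^o integral is defined as $\int_0^T X(t)\,\mathrm dB(t)=\sum_{i=0}^{N-1}\xi_i\bigl(B(t_{i+1})-B(t_i)\bigr)$. Taking expectations term-by-term and using the tower property,
\[
\mathbb E\bigl[\xi_i(B(t_{i+1})-B(t_i))\bigr]=\mathbb E\!\left[\xi_i\,\mathbb E\bigl[B(t_{i+1})-B(t_i)\,\big|\,\mathcal F_{t_i}\bigr]\right]=0,
\]
since Brownian increments after $t_i$ are independent of $\mathcal F_{t_i}$ and have mean zero (Definition \ref{def:Brownian_motion}). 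Summing over $i$ gives the zero-mean property for simple processes. In exactly the same way, the It\^o isometry $\mathbb E\bigl[(\int_0^T X\,\mathrm dB)^2\bigr]=\int_0^T\mathbb E[X^2(t)]\,\mathrm dt$ is verified on simple processes by expanding the square and noting that the cross terms vanish by the same conditioning argument.

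Next, I would extend to the general integrand. Under the hypothesis $\int_0^T\mathbb E[X^2(t)]\,\mathrm dt<\infty$, standard approximation shows there exists a sequence $\{X_n\}$ of simple adapted processes with $\int_0^T\mathbb E[(X-X_n)^2(t)]\,\mathrm dt\to 0$ as $n\to\infty$. Define $\int_0^T X\,\mathrm dB\coloneqq \lim_n\int_0^T X_n\,\mathrm dB$ where the limit is taken in $L^2(\Omega)$; the It\^o isometry on simple processes guarantees that $\{\int_0^T X_n\,\mathrm dB\}$ is Cauchy in $L^2$, so the limit exists and is independent of the approximating sequence.

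Finally, since $\mathbb E[\int_0^T X_n\,\mathrm dB]=0$ for each $n$ by the previous step, and since convergence in $L^2(\Omega)$ implies convergence of expectations (by Cauchy--Schwarz, $|\mathbb E[Y_n]-\mathbb E[Y]|\le \|Y_n-Y\|_{L^2}$), we conclude
\[
\mathbb E\!\left[\int_0^T X(t)\,\mathrm dB(t)\right]=\lim_{n\to\infty}\mathbb E\!\left[\int_0^T X_n(t)\,\mathrm dB(t)\right]=0.
\]
The main technical point is the approximation of a general adapted $X$ satisfying $\int_0^T\mathbb E[X^2]\,\mathrm dt<\infty$ by simple adapted processes in the $L^2(\mathrm dt\otimes\mathrm d\mathbb P)$-norm; this is the standard density lemma, typically proved by first truncating and mollifying in time to get a bounded continuous adapted process, and then discretising by left-endpoint Riemann sums $X_n(t)=\sum_i X(t_i^n)\mathbf{1}_{[t_i^n,t_{i+1}^n)}(t)$, whose adaptedness is preserved since left endpoints are used. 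Everything else is a routine consequence of the It\^o isometry and continuity of expectation.
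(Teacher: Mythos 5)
Your proof is correct and is the standard textbook argument (zero mean for simple adapted processes via conditioning on $\mathcal F_{t_i}$, then extension through the It\^o isometry and $L^2$-continuity of expectation); the paper does not prove this statement itself but merely quotes it from Klebaner, whose proof follows exactly this route, so your attempt matches the intended argument. The only point worth keeping in mind is that the hypothesis $\int_0^T X^2(t)\,\mathrm dt<\infty$ a.s.\ signals that the integral may have been defined for this larger class by localization, so strictly one should add the standard consistency remark that under the extra condition $\int_0^T\mathbb{E}[X^2(t)]\,\mathrm dt<\infty$ that construction coincides with the $L^2$-limit you use.
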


\begin{thm}(It\^o's formula, cf. \cite[Theorem 4.13]{klebaner2005introduction})
\label{thm:Ito_formula}
If $B(t)$ is a Brownian motion on $[0,T]$ and $f(x)\in C^2(\mathbb{R})$, then $f\big(B(t)\big)=f\big(B(0)\big)+\int_0^tf'\big(B(s)\big)\mathrm dB(s)+\frac{1}{2}\int_0^tf''\big(B(s)\big)\mathrm ds\text{ for any }t<T$ (or $\mathrm df\big(B(t)\big)=f'\big(B(s)\big)\mathrm dB(s)+\frac{1}{2}f''\big(B(s)\big)\mathrm ds$ for short).
\end{thm}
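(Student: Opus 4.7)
The plan is to prove It\^o's formula via the classical Taylor-expansion argument along a refinement of partitions of $[0,t]$, using the quadratic variation identity $[B,B](t)=t$ (recalled just above the theorem) to extract the drift contribution $\tfrac{1}{2}\int_0^t f''(B(s))\,\mathrm ds$. This is the cornerstone proof that opens It\^o calculus and relies on nothing beyond $C^2$-regularity of $f$ and basic moment estimates for Brownian increments.

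First I would fix $t<T$, choose partitions $0=t_0^n<t_1^n<\cdots<t_{k_n}^n=t$ with mesh $\delta_n\to 0$, and write the telescoping identity
\[
f(B(t))-f(B(0))=\sum_{i=0}^{k_n-1}\bigl[f(B(t_{i+1}^n))-f(B(t_i^n))\bigr].
\]
Applying Taylor's theorem to each summand (using $f\in C^2(\mathbb R)$) yields
\[
f(B(t_{i+1}^n))-f(B(t_i^n))=f'(B(t_i^n))\,\Delta_iB+\tfrac{1}{2}f''(B(t_i^n))(\Delta_iB)^2+\rho_i^n,
\]
where $\Delta_iB:=B(t_{i+1}^n)-B(t_i^n)$ and $\rho_i^n=\tfrac{1}{2}[f''(\eta_i^n)-f''(B(t_i^n))](\Delta_iB)^2$ for some intermediate point $\eta_i^n$ between $B(t_i^n)$ and $B(t_{i+1}^n)$. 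Summing over $i$ gives three pieces whose limits as $\delta_n\downarrow 0$ I must identify.

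For the first piece, $\sum_i f'(B(t_i^n))\,\Delta_iB$ is a left-endpoint Riemann sum approximating $\int_0^t f'(B(s))\,\mathrm dB(s)$, and after a preliminary localization (stopping when $|B|$ first exits a large ball so that $f'\circ B$ is bounded) convergence follows from It\^o's isometry. For the second piece I would split
\[
\sum_i f''(B(t_i^n))(\Delta_iB)^2=\sum_i f''(B(t_i^n))\,\Delta_it+\sum_i f''(B(t_i^n))\bigl[(\Delta_iB)^2-\Delta_it\bigr],
\]
with $\Delta_it:=t_{i+1}^n-t_i^n$. The first part is a Riemann sum tending to $\int_0^t f''(B(s))\,\mathrm ds$ by continuity of $f''\circ B$. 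The hard part is the second, centered part: it vanishes in $L^2$ because the independence and normality of Brownian increments give $\mathbb E[((\Delta_iB)^2-\Delta_it)^2]=2(\Delta_it)^2$, so the total variance is $O(\delta_n\cdot t)$ after using boundedness of $f''\circ B$ from the localization — this is a weighted refinement of $[B,B](t)=t$. Finally, the remainder $\sum_i\rho_i^n$ vanishes by uniform continuity of $f''$ on the (a.s.) compact range of $B$ on $[0,t]$, combined with the tightness of $\sum_i(\Delta_iB)^2$. Removing the localization at the end by sending the stopping time to $t$ yields the formula almost surely. The main obstacle is the centered sum above; everything else is routine bookkeeping with partitions.
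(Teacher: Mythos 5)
Your argument is correct, and it is the standard Taylor-expansion/quadratic-variation proof of It\^o's formula: the paper itself offers no proof, stating the result in its probability appendix only as a quotation of \cite[Theorem 4.13]{klebaner2005introduction}, where essentially your argument (telescoping sum, Taylor remainder, the $L^2$-vanishing of the centered sum $\sum_i f''(B(t_i^n))[(\Delta_iB)^2-\Delta_i t]$, and localization) is carried out. The only cosmetic point is the final step: one removes the localization by letting the radius of the stopped ball tend to infinity, so that the exit time exceeds $t$ with probability approaching one, rather than ``sending the stopping time to $t$.''
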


\begin{rmk}
It\^o's formula extends to higher dimensions (see e.g. \cite[p. 116]{klebaner2005introduction}).
\end{rmk}

It\^o's formula together with the zero mean property enters into the essence of Feynman-Kac formula which in turn builds a connection between partial differential equation and probability.

\begin{thm}(Feynman-Kac formula, cf. \cite[Theorem 6.8]{klebaner2005introduction})
\label{thm:F-K_formula}
For given bounded functions $r(x,t)$ and $g(t)$ the solution, if any, to ${\partial f(x,t)}/{\partial t}+L_t f(x,t)=r(x,t)f(x,t),\; f(x,T)=g(x)$ is uniquely given by $C(x,t)=\mathbb{E}[e^{-\int_t^T r\big(X(u),u\big)\mathrm du}g\big(X(T)\big)|X(t)=x]$. Here $L_t$ denotes the generator of the stochastic process $X(t)$.
\end{thm}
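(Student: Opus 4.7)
The strategy is the classical martingale argument: given a solution $f$ of the backward PDE, exhibit an associated stochastic process that is a martingale, and then equate its expectations at the two endpoints $t$ and $T$. Concretely, fix $x\in\mathbb{R}^n$ and $t\in[0,T]$, and consider on the interval $[t,T]$ the process
\begin{equation*}
Y(s)\coloneqq \exp\!\left(-\int_t^s r\bigl(X(u),u\bigr)\mathrm du\right) f\bigl(X(s),s\bigr),
\end{equation*}
where $X$ is the diffusion with generator $L_s$ started at $x$ at time $t$. The hope is that $Y$ is a martingale under $\mathbb{P}_{x,t}$, in which case $\mathbb{E}_{x,t}[Y(t)]=\mathbb{E}_{x,t}[Y(T)]$ immediately yields $f(x,t)=\mathbb{E}_{x,t}\bigl[e^{-\int_t^T r(X(u),u)\,\mathrm du}\,g(X(T))\bigr]$, which is precisely the Feynman--Kac representation (and simultaneously gives the uniqueness asserted in the statement).

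First I would apply the multidimensional It\^o formula (Theorem \ref{thm:Ito_formula} and the remark thereafter) to the $C^{1,2}$ function $(s,y)\mapsto e^{-\int_t^s r(\cdot,u)\mathrm du}f(y,s)$ evaluated along $X$. A direct computation gives a $\mathrm ds$-drift term equal to
\begin{equation*}
\exp\!\left(-\int_t^s r\,\mathrm du\right)\!\left[\frac{\partial f}{\partial s}+L_sf-rf\right]\bigl(X(s),s\bigr)\mathrm ds,
\end{equation*}
which vanishes identically by the PDE assumption on $f$. Hence $\mathrm dY(s)$ reduces to a pure stochastic (It\^o) integral term against $\mathrm dB(s)$ with integrand of the form $e^{-\int_t^s r\,\mathrm du}\,\nabla f(X(s),s)\cdot\sigma(X(s),s)$, where $\sigma$ is the diffusion coefficient appearing in the SDE for $X$.

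Next I would invoke the zero mean property of It\^o integrals to conclude that $s\mapsto Y(s)$ is a martingale on $[t,T]$, which requires the $L^2$-integrability condition on the integrand in the martingale part. Writing this out and integrating from $t$ to $T$ gives $\mathbb{E}_{x,t}[Y(T)-Y(t)]=0$; since $Y(t)=f(x,t)$ (the exponential factor being $1$) and $Y(T)=e^{-\int_t^T r(X(u),u)\mathrm du}g(X(T))$ by the terminal condition $f(\cdot,T)=g$, the desired formula drops out, and the same identity shows that any solution must equal $C(x,t)$, hence uniqueness.

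The main obstacle I anticipate is the integrability check needed to legitimately apply the zero mean property: one must verify that $\int_t^T\mathbb{E}\bigl[e^{-2\int_t^s r\,\mathrm du}\,|\nabla f|^2|\sigma|^2(X(s),s)\bigr]\mathrm ds<\infty$. Under the stated hypothesis that $r$ is bounded, the exponential factor is under control, and $g$ bounded together with standard regularity/growth of $f$ and $\sigma$ handles the rest; if $f$ or $\sigma$ has polynomial growth, one argues by a localization via stopping times $\tau_N=\inf\{s\geq t:|X(s)|\geq N\}$, derives the martingale identity up to $\tau_N\wedge T$, and passes to $N\to\infty$ by dominated convergence. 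Aside from this technical point, the argument is a direct application of It\^o and the zero mean property as already set up in Subsection \ref{subsection:Some Results in Probability}.
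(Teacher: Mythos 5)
Your proposal is correct and is precisely the standard martingale argument (It\^o's formula applied to $e^{-\int_t^s r}f(X(s),s)$, the PDE killing the drift, and the zero mean property plus localization giving the martingale identity); the paper itself offers no proof here, as it simply quotes this as Theorem 6.8 of the cited reference, whose proof is the same argument you give. No gaps beyond the integrability check you already flag and handle by stopping times.
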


\begin{rmk}
\label{rmk: FK}
For Brownian motion in $\mathbb{R}^n$ its generator $L_t=1/2 \Laplace$ and thus the solution to equation ${\partial f}(t,x)/{\partial t}+L_t f(x,t)=0,\; f(x,T)=g(x)$ is given by $\mathbb{E}[g\big(B(T)\big)|B(t)=x]$ (cf. \cite[Theorem 6.6]{klebaner2005introduction}). Equivalently, the solution to the standard heat equation ${\partial f}(t,x)/{\partial t}={1}/{2}\Laplace f(x,t),\; f(x,0)=g(x)$ is given by $\mathbb{E}_x [g\big(B(t)\big)]\coloneqq \mathbb{E}\left[ g\big(B(t)\big)|B(0)=x\right]$. 
\end{rmk}

The appropriate version of the Feynman-Kac formula used in geometry (cf. \cite[Theorem 7.2.1]{hsu2002stochastic} or \cite{bismut1984atiyah}) is an adaptation of the classical one above.  It will be further extended to the locally free case (see \eqref{eq:heat_eq_O(M)} in Subsection \ref{subsection:Prob} and Appendix \ref{appendix:adaptation}).
\end{document}